\newtheorem{thm}{Theorem}[section]
\newtheorem{lem}{Lemma}[section]
\newtheorem{cor}[lem]{Corollary}
\numberwithin{equation}{section}
\newcommand{\eps}{\varepsilon}
\newcommand{\E}{\mathbf{E}\,}
\newcommand{\R}{\mathbf{R}}
\newcommand{\im}{\mathrm{Im}\;\!}
\newcommand{\Tr}{\mathrm{Tr}\;\!}
\newenvironment{Proof of}{\removelast
 skip\par\medskip
\noindent{\em Proof of} \rm}{\penalty-20\null\hfill$\square$\par\medbreak}
\def\be{\begin{equation}}
\def\en{\end{equation}}
\def\bee{\begin{eqnarray*}}
\def\ene{\end{eqnarray*}}
\def\E{{\bf E}}
\def\R{{\mathbb R}}
\def\Tr{{\rm Tr}\,}
\def\im{{\rm Im}\,}
\def\<{\left<}
\def\>{\right>}
\def\1{{\bf 1}}
\def\4{\kern1pt}
\begin{document}
\bibliographystyle{}

\vspace{1in}
 \date{}
\title
{Optimal bounds for convergence of  expected spectral distributions to the semi-circular law\\   
for the $4+\eps$ moment ensemble}
\author{{\bf F. G\"otze}\\{\small Faculty of Mathematics}
\\{\small University of Bielefeld}\\{\small Germany}
\and {\bf A. Tikhomirov}\\{\small Department of Mathematics
}\\{\small Komi Science Center of Ural Division of RAS,}\\{\small Syktyvkar State University}
\\{\small  Russia}
}
\maketitle
 \footnote{Research supported   by SFB 701 ``Spectral Structures and Topological Methods in Mathematics'' University of Bielefeld.
  Tikhomirov's research supported   by grants  RFBR N~14-01-00500  and   by Program of Fundamental Research UD of RAS, Project ¹ 15-16-1-3}

\date{}

\maketitle
\begin{abstract}
This paper extends a previous bound of order $O(n^{-1})$ of the authors \cite{GT:2015}
for the rate of convergence in Kolmogorov distance
of the expected  spectral distribution of a Wigner random matrix ensemble to the semicircular law. We relax the moment
conditions for  entries of the Wigner matrices  from order $8$ to order  $4+ \eps$ for an arbitrary small $\eps>0$.
\end{abstract}
\maketitle

\setcounter{equation}{0}

\section{Introduction}
Consider a family $\mathbf X = \{X_{jk}\}$, $1 \leq j \leq k \leq n$,
of independent real random variables defined on some probability space
$(\Omega,{\textfrak M},\Pr)$, for any $n\ge 1$. Assume that $X_{jk} = X_{kj}$, for
$1 \leq k < j \leq n$, and introduce the symmetric matrices
\begin{displaymath}
 \mathbf W = \ \frac{1}{\sqrt{n}} \left(\begin{array}{cccc}
 X_{11} &  X_{12} & \cdots &  X_{1n} \\
 X_{21} & X_{22} & \cdots &  X_{2n} \\
\vdots & \vdots & \ddots & \vdots \\
 X_{n1} &  X_{n2} & \cdots &  X_{nn} \\
\end{array}
\right).
\end{displaymath}

The matrix $\mathbf W$ has a random spectrum $\{\lambda_1,\dots,\lambda_n\}$ and an
associated spectral distribution function
$\mathcal F_{n}(x) = \frac{1}{n}\ {\rm card}\,\{j \leq n: \lambda_j \leq
x\}, \quad x \in \R$.
Averaging over the random values $X_{ij}(\omega)$, define the expected
(non-random) empirical distribution functions
$ F_{n}(x) = \E\,\mathcal F_{n}(x)$.
Let $G(x)$ denote the semi-circular distribution function with density
$g(x)=G'(x)=\frac1{2\pi}\sqrt{4-x^2}\mathbb I_{[-2,2]}(x)$, where $\mathbb I_{[a,b]}(x)$
denotes the indicator--function of the interval $[a,b]$. Let $\Delta_n:=\sup_x| F_n(x)-G(x)|$.
In a recent paper \cite{GT:2015} we proved the following result
\begin{thm} Let $\E X_{jk}=0$, $\E X_{jk}^2=1$.  Assume that for some $0<\varkappa\le 4$
\begin{equation}\label{moment}
 \sup_{n\ge1}\sup_{1\le j,k\le n}\E|X_{jk}|^4=: \mu_4<\infty.
\end{equation}
Assume as well that there exists a constant $D_0$ such that for all $n\ge 1$
\begin{equation}
 \sup_{1\le j,k\le n}|X_{jk}|\le D_0n^{\frac14}.
\end{equation}

Then,   there exists a positive  constant $C=C(D_0,\mu_4)$  depending on
 $D_0$ and $\mu_4$ only
such that
\begin{equation} \label{kolmog0}
\Delta_n=\sup_x|F_n(x)-G(x)|\le Cn^{-1}.
\end{equation}

\end{thm}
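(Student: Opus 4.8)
The plan is to argue through Stieltjes transforms and a self-consistent (quadratic) equation, and then pass to the Kolmogorov distance by means of a smoothing inequality. For $z=u+iv$ with $v>0$ let $R(z)=(\W-zI)^{-1}$ be the resolvent, $m_n(z)=\frac1n\Tr R(z)$ the Stieltjes transform of $\mathcal F_n$, $s_n(z)=\E m_n(z)$ that of $F_n$, and let $s(z)$ be the Stieltjes transform of the semicircular law, the root of $s^2+zs+1=0$ with $\im s>0$; recall the edge identity $z+2s(z)=\sqrt{z^2-4}$. Since the entries are already truncated at level $D_0n^{1/4}$, in addition to \eqref{moment} we may use $\E|X_{jk}|^{p}\le C_p\,n^{(p-4)/4}$ for $p\ge4$, which is the only ``extra'' integrability the proof exploits. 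First I would apply the Schur complement formula to a diagonal resolvent entry,
\[
R_{jj}=\Bigl(-z+\tfrac{X_{jj}}{\sqrt n}-\alpha_j^{*}R^{(j)}(z)\alpha_j\Bigr)^{-1},\qquad \alpha_j=\tfrac1{\sqrt n}\,(X_{jk})_{k\ne j},
\]
with $R^{(j)}$ the resolvent of the principal minor, and write $\alpha_j^{*}R^{(j)}\alpha_j=m_n(z)+\eta_j$, where $\eta_j$ collects the off-diagonal fluctuation $\frac1n\sum_{k\ne l}X_{jk}X_{jl}R^{(j)}_{kl}$, the diagonal centering $\frac1n\sum_k(X_{jk}^{2}-1)R^{(j)}_{kk}$, and the rank-one correction $m_n^{(j)}(z)-m_n(z)$, the last bounded by $(nv)^{-1}$ by interlacing. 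Expanding $R_{jj}$ to second order through $\bigl|(a-\delta)^{-1}-a^{-1}-a^{-2}\delta\bigr|\le|a|^{-3}|\delta|^{2}$ (valid for $|\delta/a|\le\frac12$) and averaging over $j$ yields a self-consistent equation $s_n(z)=-\bigl(z+s_n(z)\bigr)^{-1}+\mathcal E_n(z)$, in which $\mathcal E_n$ is assembled from the \emph{averaged} fluctuation $\frac1n\sum_j\E\eta_j$, the second-order Schur remainders, the $X_{jj}/\sqrt n$ terms, and the variance correction arising because $\E(z+m_n)^{-1}\ne(z+s_n)^{-1}$.

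Second, subtracting $s^2+zs+1=0$ one obtains the deterministic comparison
\[
s_n(z)-s(z)=\frac{\bigl(z+s_n(z)\bigr)\,\mathcal E_n(z)}{\,z+s_n(z)+s(z)\,},\qquad z+s_n+s=\sqrt{z^2-4}+\bigl(s_n-s\bigr),
\]
so the equation is stably invertible away from $\pm2$ but degenerates to a quadratic relation near the edges. The analytic core is then a pair of estimates: (a) the variance bounds $\E|\eta_j|^{2}\le C(1+\mu_4)\,(nv)^{-1}\im m_n(z)+C(nv)^{-2}$ together with a matching bound for $\E|m_n(z)-s_n(z)|^{2}$, obtained from $\sum_{k,l}|R^{(j)}_{kl}|^{2}=v^{-1}\im\Tr R^{(j)}(z)$ and \eqref{moment}; and (b) a fluctuation-averaging estimate showing that $\E\bigl|\frac1n\sum_j\eta_j\bigr|$ is of order $(nv)^{-1}$, up to a weight in $|z\mp2|$ near the edges, which is far smaller than the typical size $(nv)^{-1/2}$ of a single $\eta_j$; this is proved by expanding the square, using the independence of the rows together with resolvent identities, and again only the fourth and truncated moments. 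Feeding these into the comparison identity and running a bootstrap in $v$ — starting at a scale $v\asymp1$, where $\Lambda_n(z):=|s_n(z)-s(z)|$ is trivially small, and descending to $v_0$ slightly above $n^{-1}$ — one propagates, for $|u|\le4$ and $v\ge v_0$, a bound of the shape $\Lambda_n(z)+\E|m_n(z)-s(z)|\le C\,n^{-1}(\log n)^{b}\,|z^{2}-4|^{-\beta}$ for some $b>0$, $\beta\in(0,1)$, whose right-hand side is integrable in $u$ over $[-4,4]$.

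Third, I would insert these bounds into a smoothing inequality relating $\Delta_n$ to the Stieltjes transforms, of the form
\[
\Delta_n\le C\int_{-4}^{4}\bigl|s_n(u+iV)-s(u+iV)\bigr|\,du+C\sup_{|u|\le4}\Bigl|\int_{v_0}^{V}\bigl(s_n(u+it)-s(u+it)\bigr)\,dt\Bigr|+Cv_0,
\]
for a fixed $V$, the range $|u|>4$ being controlled by operator-norm and moment tail estimates for $\|\W\|$ together with the crude decay of $F_n-G$ there. Each term on the right is then $O(n^{-1})$ up to a power of $\log n$; to reach the optimal $O(n^{-1})$ one reinserts this improved bound for $\Lambda_n$ into the variance and fluctuation-averaging estimates and repeats the bootstrap a bounded number of times, each pass lowering the exponent of $\log n$ until it disappears.

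The step I expect to dominate the work is the behaviour near the edges $\pm2$ on the critical scale $v\asymp n^{-1}$. There $|z+2s|$ is as small as $n^{-1/2}$ while a single $\eta_j$ is of order $n^{-1/4}$, so naive Schur-complement perturbation breaks down and one genuinely needs the fluctuation-averaging gain, pushed to essentially optimal precision; carrying that out with only $4+\eps$ (here: four) moments — where the concentration tools available in higher-moment proofs are missing, so that one must exploit the truncation level $D_0n^{1/4}$ quantitatively to tame the third- and fourth-order Schur remainders and the rare large-entry events — and doing so tightly enough that the self-improvement terminates in the log-free bound \eqref{kolmog0}, is the crux of the argument.
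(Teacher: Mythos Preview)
This theorem is not proved in the present paper: it is quoted verbatim as a result of the authors' previous work \cite{GT:2015} (``In a recent paper \cite{GT:2015} we proved the following result''), and the body of the paper is devoted to the $4+\varkappa$--moment refinement (Theorems~\ref{main}--\ref{stieltjesmain}). So there is no ``paper's own proof'' of this statement to compare against here.

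That said, your outline is a fair high-level description of the strategy that underlies both \cite{GT:2015} and the present paper: Schur complement for $R_{jj}$, a self-consistent equation for $s_n(z)$ with error $\mathcal E_n$, quadratic comparison through $z+2s(z)=\sqrt{z^2-4}$, and a smoothing inequality to pass to $\Delta_n$. Where your sketch diverges from the methodology actually visible in this paper (for the closely related Theorem~\ref{stieltjesmain}) is in the mechanism you invoke to beat the naive $(nv)^{-1/2}$ size of the fluctuations. You appeal to a generic ``fluctuation-averaging'' estimate for $\frac1n\sum_j\eta_j$ and then an iterative log-removal. The paper instead proceeds by (i) a recursive moment bound $\E|R_{jj}^{(\mathbb J)}|^p\le H_0^p$ for $p$ up to a power of $nv$ (Theorem~\ref{rjj}, Lemmas~\ref{e5}, \ref{rjj2}), propagated downward in $v$ by a factor $s$ at each step, and (ii) a very explicit term-by-term decomposition of $\E\Lambda_n$ into the pieces $\mathfrak T_{ij}$, $H_{k}$, $L_{k}$ of Section~\ref{expect}, each bounded directly using conditional independence, \cite[Lemma~7.5]{GT:2015}, and the moment bounds, with no iteration in $\log n$. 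The crucial gain near the edge comes not from averaging over $j$ but from extracting an extra factor $|z+m_n+s|^{-1}\sim|z^2-4|^{-1/2}$ in the denominators and showing that the leading term is exactly $-\frac{s(z)s'(z)}{n}$ (the quantity $\mathfrak T_{41}$), which already carries the correct $n^{-1}$ scaling.

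So your plan is not wrong as a route to \emph{some} $O(n^{-1})$ bound, but two of its load-bearing assertions --- that fluctuation averaging at fourth moments gives the clean $(nv)^{-1}$ gain with the right edge weight, and that a finite number of bootstrap passes removes all logarithms --- are precisely the hard content, and you have only named them. If you want to reconstruct the argument, the paper's scheme (high-moment control of $R_{jj}$ via a descent in $v$, then a single careful expansion of $\E\Lambda_n$) is more in line with what \cite{GT:2015} actually does than the Erd\H os--Yau--Yin style fluctuation-averaging you sketch.
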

\begin{cor}\label{cormain}Let $\E X_{jk}=0$, $\E X_{jk}^2=1$.  Assume that
\begin{equation}\label{moment1}
 \sup_{n\ge 1}\sup_{1\le j,k\le n}\E|X_{jk}|^8=:\mu_8<\infty.
\end{equation}
 Then,   there  exists a positive  constant $C=C(\mu_8)$  depending on
 $\mu_8$  only
such that
\begin{equation} \label{kolmog1}
\Delta_n\le Cn^{-1}.
\end{equation}
\end{cor}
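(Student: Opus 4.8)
The plan is to deduce the corollary from Theorem~1.1 by truncation, recentering and rescaling of the matrix entries, the point being to control the induced perturbation of the expected spectral distribution at the scale $n^{-1}$. Since $\Delta_n\le 1$ always, it suffices to treat $n$ above any fixed threshold, so we may work with $n$ large. Put $D_0:=1$, $\widetilde X_{jk}:=X_{jk}\,\mathbb I\{|X_{jk}|\le D_0 n^{1/4}\}$, $a_{jk}:=\E\widetilde X_{jk}$, $\sigma_{jk}^2:=\E\widetilde X_{jk}^2-a_{jk}^2$, and $\widehat X_{jk}:=(\widetilde X_{jk}-a_{jk})/\sigma_{jk}$, and let $\widetilde{\mathbf W},\widehat{\mathbf W}$ be the symmetric matrices built from $\widetilde X_{jk},\widehat X_{jk}$ as $\mathbf W$ is built from $X_{jk}$. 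Applying Markov's inequality to $|X_{jk}|^8$ and using $\E X_{jk}=0$, $\E X_{jk}^2=1$, one gets $\E X_{jk}^2\mathbb I\{|X_{jk}|>D_0 n^{1/4}\}\le \mu_8 n^{-3/2}$ and $|a_{jk}|\le \mu_8 n^{-7/4}$, hence $a_{jk}=O(n^{-7/4})$ and $1-\sigma_{jk}^2=O(n^{-3/2})$, both uniform in $j,k$. Consequently, for $n$ large, $\sigma_{jk}\ge\tfrac12$, $|\widehat X_{jk}|\le 2D_0 n^{1/4}$, $\E\widehat X_{jk}=0$, $\E\widehat X_{jk}^2=1$ and $\E|\widehat X_{jk}|^4\le C\,\E|X_{jk}|^4\le C\sqrt{\mu_8}$, so $\widehat{\mathbf W}$ meets the hypotheses of Theorem~1.1 (with $D_0$ replaced by $2D_0$ and $\mu_4$ by $C\sqrt{\mu_8}$). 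Therefore $\|\widehat F_n-G\|_\infty\le Cn^{-1}$ with $C=C(\mu_8)$, where $\widehat F_n:=\E\,\mathcal F_n^{\widehat{\mathbf W}}$ and $\|\varphi\|_\infty:=\sup_x|\varphi(x)|$.

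Next I would write $\Delta_n\le\|F_n-\widetilde F_n\|_\infty+\|\widetilde F_n-\widehat F_n\|_\infty+\|\widehat F_n-G\|_\infty$, with $\widetilde F_n:=\E\,\mathcal F_n^{\widetilde{\mathbf W}}$. For the first term, the symmetric matrix $\mathbf W-\widetilde{\mathbf W}$ has nonzero entries only in the rows and columns with index $j$ in $S:=\{j:\ |X_{jk}|>D_0 n^{1/4}\text{ for some }k\}$, so $\operatorname{rank}(\mathbf W-\widetilde{\mathbf W})\le 2|S|$; by the rank inequality for empirical spectral distribution functions (Bai), $\|\mathcal F_n^{\mathbf W}-\mathcal F_n^{\widetilde{\mathbf W}}\|_\infty\le\tfrac2n|S|$. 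Since $\p(j\in S)\le\sum_k\p(|X_{jk}|>D_0 n^{1/4})\le n\cdot\mu_8 n^{-2}$, we have $\E|S|\le\mu_8$, hence $\|F_n-\widetilde F_n\|_\infty\le\E\|\mathcal F_n^{\mathbf W}-\mathcal F_n^{\widetilde{\mathbf W}}\|_\infty\le 2\mu_8 n^{-1}$.

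For the second term I would feed back the a priori bound just obtained. Since $\widetilde X_{jk}-\widehat X_{jk}=\widetilde X_{jk}(1-\sigma_{jk}^{-1})-a_{jk}\sigma_{jk}^{-1}$ with $1-\sigma_{jk}^{-1}=O(n^{-3/2})$, $a_{jk}^2=O(n^{-7/2})$ and $\E\widetilde X_{jk}^2\le1$, we get $\E(\widetilde X_{jk}-\widehat X_{jk})^2=O(n^{-3})$, hence $\E\,\Tr(\widetilde{\mathbf W}-\widehat{\mathbf W})^2=n^{-1}\sum_{j,k}\E(\widetilde X_{jk}-\widehat X_{jk})^2=O(n^{-2})$. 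By the Hoffman--Wielandt inequality, the eigenvalues listed in increasing order satisfy $\sum_j(\lambda_j^{\widetilde{\mathbf W}}-\lambda_j^{\widehat{\mathbf W}})^2\le\Tr(\widetilde{\mathbf W}-\widehat{\mathbf W})^2$, so for any $\delta>0$ and any $x$, $\mathcal F_n^{\widetilde{\mathbf W}}(x)\le\mathcal F_n^{\widehat{\mathbf W}}(x+\delta)+(n\delta^2)^{-1}\Tr(\widetilde{\mathbf W}-\widehat{\mathbf W})^2$, and symmetrically with $x-\delta$ and the inequality reversed. Taking expectations and using $\widehat F_n(x\pm\delta)\le G(x\pm\delta)+Cn^{-1}\le G(x)+\delta/\pi+Cn^{-1}$ (together with the reverse inequality, since $g\le1/\pi$), we obtain $|\widetilde F_n(x)-\widehat F_n(x)|\le\delta/\pi+2Cn^{-1}+O(n^{-3}\delta^{-2})$ for every $x$; the choice $\delta=n^{-1}$ gives $\|\widetilde F_n-\widehat F_n\|_\infty=O(n^{-1})$. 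Adding the three estimates yields \eqref{kolmog1}.

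The step I expect to be the main obstacle is the second one. The recentering and rescaling perturb $\widetilde{\mathbf W}$ by an amount negligible in the Hilbert--Schmidt norm, but what is needed is a bound on the Kolmogorov distance between the \emph{expected} spectral distributions, and a bare Hoffman--Wielandt ($L^2$-Wasserstein) estimate would only deliver $O(n^{-3/4})$, which is too weak. The remedy is exactly the feedback used above: the a priori estimate $\|\widehat F_n-G\|_\infty\le Cn^{-1}$ together with the uniform density bound $g\le1/\pi$ forces the increments of $\widehat F_n$ over intervals of length $\delta$ to be $O(\delta)+O(n^{-1})$, which upgrades the mean-square eigenvalue displacement to the required $n^{-1}$-bound after optimizing in $\delta$. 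By contrast, the first term is routine via the rank inequality, and the verification of the hypotheses of Theorem~1.1 for $\widehat{\mathbf W}$ is a standard truncation computation.
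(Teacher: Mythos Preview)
The paper does not actually prove Corollary~\ref{cormain}; it is quoted, together with Theorem~1.1, as a result already established in \cite{GT:2015}, and the analogous reduction of Theorem~\ref{main} to Theorem~\ref{trun} is likewise dismissed with ``standard techniques similar to \cite{GT:2015}''. So there is no in-paper proof to compare against. Your argument is precisely the standard truncation--recentering--rescaling reduction that underlies such statements, and it is correct.

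A couple of small remarks. First, a harmless sign slip: $\widetilde X_{jk}-\widehat X_{jk}=\widetilde X_{jk}(1-\sigma_{jk}^{-1})+a_{jk}\sigma_{jk}^{-1}$, not $-a_{jk}\sigma_{jk}^{-1}$; this does not affect the second-moment bound. Second, in the rank step one can even take $\operatorname{rank}(\mathbf W-\widetilde{\mathbf W})\le|S|$, since whenever the $(j,k)$ entry is nonzero both $j$ and $k$ lie in $S$, so the nonzero entries live in the $S\times S$ block; your weaker bound $2|S|$ is of course sufficient. The feedback device you single out as the main obstacle---using the already-obtained bound $\|\widehat F_n-G\|_\infty\le Cn^{-1}$ together with $\|g\|_\infty\le 1/\pi$ to convert the Hoffman--Wielandt $L^2$ eigenvalue displacement into a Kolmogorov bound of order $n^{-1}$---is exactly the right idea and is carried out cleanly.
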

Here we describe some refinements of the proof of Theorem \ref{trun} in \cite{GT:2015} showing that
\begin{thm}\label{main}
Assume that for some $0<\varkappa\le4$ there exists positive constant $0<\mu_{4+\varkappa}<\infty$ such that
\begin{equation}
\sup_{j,k\le1}\E|X_{jk}|^{4+\varkappa}\le \mu_{4+\varkappa}.
\end{equation}
Then there exists a positive constant $C$ depending on $\varkappa$ and $\mu_{4+\varkappa}$ only such that
\begin{equation}
\Delta_n\le Cn^{-1}.
\end{equation}
\end{thm}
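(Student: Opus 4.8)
The plan is to reduce Theorem~\ref{main} to the already-established Theorem~\ref{trun} from \cite{GT:2015} by a truncation argument, so that the work is entirely in controlling the error introduced by replacing the original entries by bounded ones. First I would introduce the truncation level $a_n := D_0 n^{1/4}$ (for a suitable constant $D_0$ to be fixed) and set $\widetilde X_{jk} := X_{jk}\mathbb I\{|X_{jk}|\le a_n\}$, then recentre and rescale: $\widehat X_{jk} := \sigma_{jk}^{-1}(\widetilde X_{jk}-\E\widetilde X_{jk})$ where $\sigma_{jk}^2 := \Var(\widetilde X_{jk})$, keeping the symmetry $\widehat X_{jk}=\widehat X_{kj}$. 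Under the hypothesis $\sup_{j,k}\E|X_{jk}|^{4+\varkappa}\le\mu_{4+\varkappa}$ one checks by the usual tail estimates that $|\E\widetilde X_{jk}| = |\E X_{jk}\mathbb I\{|X_{jk}|>a_n\}| \le \mu_{4+\varkappa} a_n^{-(3+\varkappa)}$ and $|1-\sigma_{jk}^2|\le C\mu_{4+\varkappa}a_n^{-(2+\varkappa)}$, and crucially that $\sup_{j,k}\E|\widehat X_{jk}|^4\le C(\mu_{4+\varkappa})$, so the truncated ensemble $\widehat{\mathbf W}=\frac1{\sqrt n}(\widehat X_{jk})$ satisfies all hypotheses of Theorem~\ref{trun} with $\varkappa=4$, giving $\sup_x|\widehat F_n(x)-G(x)|\le Cn^{-1}$ for the expected spectral distribution $\widehat F_n$ of $\widehat{\mathbf W}$.

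The remaining task is to bound $\sup_x|F_n(x)-\widehat F_n(x)|$ by $Cn^{-1}$. Here I would split the comparison into two pieces via the intermediate matrix $\widetilde{\mathbf W}=\frac1{\sqrt n}(\widetilde X_{jk})$. For the passage from $\mathbf W$ to $\widetilde{\mathbf W}$, the two matrices coincide on the event $\mathcal B_n := \{X_{jk}=\widetilde X_{jk}\text{ for all }j\le k\}$, and $\Pr(\mathcal B_n^c)\le \sum_{j\le k}\Pr(|X_{jk}|>a_n)\le \tfrac12 n^2\cdot\mu_{4+\varkappa}a_n^{-(4+\varkappa)} = C n^{2-(4+\varkappa)/4} = Cn^{-\varkappa/4}$. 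Since spectral distribution functions differ by at most $1$ in sup-norm, a naive bound on $|F_n-\widetilde F_n|$ is only $O(n^{-\varkappa/4})$, which is far too weak. The fix is to use the rank inequality: on $\mathcal B_n^c$ the number of ``bad'' entries is typically $O(n^{2-(4+\varkappa)/4})$, but more usefully $\|\mathcal F_n - \widetilde{\mathcal F}_n\|_\infty \le \frac1n\,\mathrm{rank}(\mathbf W-\widetilde{\mathbf W})\le \frac2n\,\#\{(j,k):j\le k,\ X_{jk}\neq\widetilde X_{jk}\}$, so taking expectations, $\|F_n-\widetilde F_n\|_\infty\le \frac2n\cdot\tfrac12 n^2\cdot\mu_{4+\varkappa}a_n^{-(4+\varkappa)} = Cn^{1-(4+\varkappa)/4} = Cn^{-\varkappa/4}$ — still not enough. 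The genuinely delicate point, and what I expect to be the main obstacle, is that one must instead run the Stieltjes-transform/resolvent comparison directly: show that $|\E m_n(z)-\E\widetilde m_n(z)|$ is small uniformly on the relevant contour (with $\im z\gtrsim n^{-1}$), using the resolvent identity $R-\widetilde R = -R(\mathbf W-\widetilde{\mathbf W})\widetilde R$ together with the fact that each bad entry contributes a factor $a_n^{-1}\E|X_{jk}|\mathbb I\{|X_{jk}|>a_n\}\le \mu_{4+\varkappa}a_n^{-(3+\varkappa)}\cdot a_n^{-1}$ and there are $O(n^2)$ of them, i.e. one leverages the $O(n^{-1})$ bound already available for $\widetilde{\mathbf W}$ and shows the perturbation does not destroy it. This is exactly the kind of refinement the authors allude to: the smoothing inequality plus careful diagonal/off-diagonal resolvent estimates must absorb the truncation error into the existing $Cn^{-1}$ bound.

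For the passage from $\widetilde{\mathbf W}$ to $\widehat{\mathbf W}$, observe that $\widehat{\mathbf W} = \frac1{\sqrt n}D^{-1}(\widetilde X_{jk}-\E\widetilde X_{jk})D^{-1}$ is not quite a diagonal conjugation because $\sigma_{jk}$ depends on both indices, but one can write $\widehat{\mathbf W}-\widetilde{\mathbf W} = \frac1{\sqrt n}\big((\sigma_{jk}^{-1}-1)(\widetilde X_{jk}-\E\widetilde X_{jk}) - \E\widetilde X_{jk}\big)$ entrywise; using $|\sigma_{jk}^{-1}-1|\le C a_n^{-(2+\varkappa)}$ and $|\E\widetilde X_{jk}|\le Ca_n^{-(3+\varkappa)}$ one estimates the operator-norm (or Hilbert--Schmidt norm) of the difference and again invokes a perturbation/smoothing inequality to conclude $\|\widetilde F_n-\widehat F_n\|_\infty\le Cn^{-1}$, since the entrywise size $O(n^{-1/2}a_n^{-(2+\varkappa)}) = O(n^{-1/2}n^{-(2+\varkappa)/4})$ is amply small once spread over $n^2$ entries. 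Combining the three bounds by the triangle inequality gives $\Delta_n = \|F_n-G\|_\infty \le \|F_n-\widehat F_n\|_\infty + \|\widehat F_n-G\|_\infty \le Cn^{-1}$, with $C$ depending only on $\varkappa$ and $\mu_{4+\varkappa}$ (the dependence on $D_0$ disappears once $D_0$ is fixed as an absolute constant in the truncation). The one subtlety worth flagging is that the ``easy'' $O(n^{-\varkappa/4})$ estimates are insufficient whenever $\varkappa<4$, so throughout one must work at the level of Stieltjes transforms on contours down to height $n^{-1}$ rather than with distribution functions directly — this is where the bulk of the technical refinement of the \cite{GT:2015} proof resides.
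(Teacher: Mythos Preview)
Your approach differs from the paper's in a way that creates a genuine gap. You truncate at level $a_n=D_0n^{1/4}$ so as to invoke the \emph{old} truncated-case theorem (Theorem~1.1 from \cite{GT:2015}) for $\widehat{\mathbf W}$, and then try to control the truncation error $\|F_n-\widehat F_n\|_\infty$. As you correctly note, the rank inequality yields only $O(n^{-\varkappa/4})$ at this truncation level, which is useless for small $\varkappa$. Your proposed remedy---a resolvent comparison $|\E m_n(z)-\E\widetilde m_n(z)|$ via $R-\widetilde R=-R(\mathbf W-\widetilde{\mathbf W})\widetilde R$ down to $\im z\sim n^{-1}$---is not carried out, and it is not clear it can be: it requires moment bounds on the resolvent entries $R_{jk}$ of the \emph{untruncated} matrix at that scale, and such bounds are essentially what one is trying to prove. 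Without a truncation hypothesis the argument for Theorem~\ref{rjj} does not go through, so this route is circular as stated.

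The paper does the opposite. It truncates at the larger level $Dn^{\alpha}$ with $\alpha=\frac{2}{4+\varkappa}$, chosen precisely so that $\Pr(|X_{jk}|>Dn^{\alpha})\le \mu_{4+\varkappa}D^{-(4+\varkappa)}n^{-2}$; then the expected rank of $\mathbf W-\widetilde{\mathbf W}$ is $O(1)$ and the rank inequality already gives $\|F_n-\widetilde F_n\|_\infty\le Cn^{-1}$. The recentring/rescaling errors are likewise $O(n^{-1})$ by the same tail bounds, so the reduction to Theorem~\ref{trun} is genuinely ``standard.'' The price is that the truncated entries are only bounded by $Dn^{\alpha}$ with $\alpha>\tfrac14$, so the old theorem no longer applies; the entire body of the present paper (Theorems~\ref{trun} and \ref{stieltjesmain}, and the improved resolvent estimates in Section~2) is devoted to re-proving the Stieltjes-transform bound under this weaker truncation. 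In short, the paper shifts the hard work from the truncation step to the resolvent analysis; you attempt to keep the old resolvent analysis and absorb the difficulty in the truncation step, but do not supply an argument that actually closes that gap.
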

Using standard techniques similar to \cite{GT:2015},  we may reduce the problem to the following
\begin{thm}\label{trun}
Assume that there exists a constant $0<\mu_{4+\varkappa}<\infty$ such that
\begin{equation}
\sup_{j,k\ge1}\E|X_{jk}|^{4+\varkappa}\le \mu_{4+\varkappa}.
\end{equation}
Assume that for  $\alpha:=\frac2{4+\varkappa}$
\begin{equation}
|X_{jk}|\le Dn^{\alpha}.
\end{equation}
Then there exists a positive  constant $C=C(D,\mu_{4+\varkappa},\varkappa)$  depending on
 $D$, $\mu_{4+\varkappa}$  and $\varkappa$ only
such that
\begin{equation} \label{kolmog}
\Delta_n=\sup_x|F_n(x)-G(x)|\le Cn^{-1}.
\end{equation}

\end{thm}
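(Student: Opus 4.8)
The plan is to follow the Stieltjes transform route used in \cite{GT:2015}, working with $m_n(z)=\E\,\frac1n\Tr(\W-z)^{-1}$ and the semicircular transform $s(z)$, which satisfies $s^2+zs+1=0$. One writes the self-consistent equation $m_n(z)+\frac1{m_n(z)+z}=\delta_n(z)$, where the error term $\delta_n(z)$ collects the contributions of the diagonal resolvent entries via the Schur complement expansion; by the Helffer--Sj\"ostrand / smoothing inequality it suffices to bound $|\delta_n(z)|$ uniformly for $z=u+iv$ with $v\asymp n^{-1}$ (up to logarithmic factors) and $u$ in a neighbourhood of $[-2,2]$, and to combine this with the stability of the quadratic equation near the edges. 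The whole skeleton — the smoothing inequality, the decomposition of $\delta_n$ into a "martingale/fluctuation'' part and a "correction'' part involving $\E|X_{jk}|^3$-type and $\E|X_{jk}|^4$-type terms, and the bootstrap on $v$ — is identical to \cite{GT:2015}; the only thing that changes is the input moment bound, so the argument is organised as a list of places where the eighth (or fourth, with truncation at $n^{1/4}$) moment was used and an explanation of how the weaker $(4+\varkappa)$ moment plus the truncation level $Dn^{\alpha}$, $\alpha=\frac2{4+\varkappa}$, still suffices.

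The key steps, in order: (1) record the resolvent identities and the representation of $\delta_n(z)$ exactly as in \cite{GT:2015}, isolating the terms $T_1,\dots,T_k$ that need re-estimation; (2) re-derive the bounds on the higher moments of the quadratic forms $\frac1n\sum_{l,m}X_{jl}X_{jm}R_{lm}^{(j)}-\frac1n\Tr R^{(j)}$ and of $\frac1n(X_{jj}^2-1)$: here one uses that under truncation at $Dn^{\alpha}$ one has $\E|X_{jk}|^p\le \mu_{4+\varkappa}(Dn^{\alpha})^{p-4-\varkappa}$ for $p>4+\varkappa$, so that e.g. $\E|X_{jk}|^6\le \mu_{4+\varkappa}D^{2-\varkappa}n^{\alpha(2-\varkappa)}$ and $\E|X_{jk}|^8\le \mu_{4+\varkappa}D^{4-\varkappa}n^{\alpha(4-\varkappa)}$, and one checks that in every appearance these powers of $n^{\alpha}$ are dominated by the powers of $n$ and $v$ already gained from the off-diagonal smallness of $R^{(j)}$ and from Rosenthal's inequality; (3) propagate these estimates through the recursive (self-improving) bound on $\Delta_n$ and on $\sup_u|m_n(u+iv)-s(u+iv)|$, exactly as in the earlier paper, down to the scale $v\sim n^{-1}\log^{O(1)}n$; (4) remove the logarithmic factors at the very end by the same extra iteration used in \cite{GT:2015}, and finally undo the truncation, noting that the contribution of the event $\{|X_{jk}|>Dn^{\alpha}\}$ to $\E\mathcal F_n$ is $O(n^{-1})$ because $\Pr(|X_{jk}|>Dn^{\alpha})\le \mu_{4+\varkappa}D^{-4-\varkappa}n^{-2}$ and there are $O(n^2)$ entries, and the corresponding shift of eigenvalues is controlled by a rank/perturbation bound.

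I expect the main obstacle to be step (2) at the spectral edge: near $u=\pm2$ the quadratic equation for $m_n$ degenerates ($s'(z)$ blows up like $v^{-1/2}$), so the stability factor multiplying $\delta_n$ is worse there, and one must show that the extra factor $n^{\alpha q}$ produced by each sixth/eighth-moment bound — with $q$ the number of truncated factors — is strictly beaten by the gain from the imaginary part $\im R^{(j)}_{ll}\le \frac1v\,\im m_n$ and the Ward identity $\sum_l|R^{(j)}_{lm}|^2=\frac1v\im R^{(j)}_{mm}$. Concretely the constraint $\alpha=\frac2{4+\varkappa}<\frac12$ has to be exactly what makes the relevant exponent negative; tracking this bookkeeping uniformly in $\varkappa\in(0,4]$, and making sure the constant $C$ stays finite as $\varkappa\downarrow0$ (it will, since $\alpha\to\frac12$ is the worst case and that is precisely the truncation level of Theorem~1.1), is the delicate part. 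Everything else is a transcription of \cite{GT:2015} with the moment inputs renamed.
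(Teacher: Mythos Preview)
Your plan is to re-run the argument of \cite{GT:2015} verbatim, replacing each occurrence of an eighth-moment bound by the truncated estimate $\E|X_{jk}|^p\le \mu_{4+\varkappa}(Dn^{\alpha})^{p-4-\varkappa}$ and then checking that the resulting powers of $n^{\alpha}$ are absorbed by the gains from the Ward identity and Rosenthal.  That substitution is not enough, and the gap is precisely in your step~(2).  When you bound $\E|\varepsilon_{j2}|^{2p}$ via the quadratic-form moment inequality (Gin\'e--Lata\l a--Zinn / Rosenthal for $U$-statistics), the decoupled term
\[
p^{4p}\,\mu_{2p}^{2}\sum_{l\ne k\in\mathbb T_j}\E\bigl|R^{(j)}_{lk}\bigr|^{2p}
\]
appears, and the only control on $\sum_{l}|R^{(j)}_{lk}|^{2p}$ that the Ward identity gives is $\le v^{-(2p-2)}\sum_l|R^{(j)}_{lk}|^{2}\le v^{-(2p-1)}\im R^{(j)}_{kk}$.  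Combined with $\mu_{2p}\lesssim n^{2p\alpha-2}$, this produces a contribution of order $n^{-2p(1-2\alpha)}v^{-(2p-1)}$ (up to polynomial factors), which for $v\asymp n^{-1}$ behaves like $n^{4p\alpha-4}$ and blows up as soon as $p>1/\alpha$.  Since the bootstrap requires $p$ to grow like a small power of $nv$, this term destroys the induction; the \cite{GT:2015} argument (its Lemma~5.8) already needed something sharper than the raw Ward bound here, and under the larger truncation exponent $\alpha=\frac{2}{4+\varkappa}>\frac14$ that earlier device no longer closes.

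The missing idea is a \emph{direct} uniform moment bound on the off-diagonal resolvent entries, namely $\E|R^{(\mathbb J)}_{jk}|^{q}\le H_0^{q}$ for $j\ne k$, propagated by a multiplicative bootstrap in $v$ (from $v\ge v_1$ down to $v\ge v_1/s$ with $s=2^{2/(1-2\alpha)}$), using the Schur expansion of $R^{(\mathbb J)}_{jk}$ in terms of $R^{(\mathbb J,j)}_{lk}$ and the induction hypothesis at the larger scale.  Once this is in hand, the dangerous sum above is simply $\le n^{2}H_0^{2p}$ and the $\varepsilon_{j2}$-bound becomes $\bigl(Cp^{4}H_0^{2}(nv)^{-2(1-2\alpha)}\bigr)^{p}$, which is exactly what the descent needs.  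In short: the difficulty is not the edge stability you flag in your last paragraph --- that part of Section~3 carries over essentially unchanged --- but rather obtaining a $v$-independent bound on $\E|R_{jk}|^{q}$ for off-diagonal $j\ne k$, which is a new lemma and not a bookkeeping exercise.  (Your step~(4) on undoing the truncation is also misplaced: it belongs to the reduction of Theorem~\ref{main} to Theorem~\ref{trun}, not to the proof of Theorem~\ref{trun} itself.)
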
 
For a discussion of this and previous results the reader should consult the introduction of \cite{GT:2015}.\\
For the proof we nned to revise parts  of the proof of Theorem 1.3 in \cite{GT:2015}.
	The conditions of Theorem \ref{main} suffice for the remaining parts and
	only the result \cite{GT:2015}[Theorem  \ref{trun}] needs to be strengthened.
	Let us reformulate this theorem here. Given constants  $a>0, A_0>0$, (to be chosen later), let  $\frac12>\varepsilon>0$ be a sequence of positive numbers (depending on $n$)  such that
\begin{equation}\label{avcond*}
 \varepsilon^{\frac32} =2v_0a, \quad v_0:=A_0n^{-1}.
\end{equation}
Define the region
\begin{equation}
\mathbb G=\{z=u+iv:\, |u|\le 2-\varepsilon, \, v\sqrt{\gamma}\ge v_0\},\quad\gamma:=\gamma(z)=|2-|u||, \quad\varepsilon=c_0v_0^{\frac23} 
\end{equation}
(see as well \cite[Definition (1.11)]{GT:2015}).
\begin{thm}\label{stieltjesmain}
Assuming the conditions of Theorem \ref{trun}, there exist positive constants $A_0>0$ and $C=C(D,A_0,\mu_{4+\varkappa})$ depending on $D$, $A_0$ and $\mu_{4+\varkappa}$ only, such that, for 
$z\in\mathbb G$
\begin{align}\notag
 |\E m_n(z)-s(z)|\le \frac C{n v^{\frac34}}+\frac C{n^{\frac32}v^{\frac32}|z^2-4|^{\frac14}}.
\end{align}

\end{thm}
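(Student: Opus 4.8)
The plan is to analyze the Stieltjes transform $m_n(z) = \frac1n\Tr(\W - zI)^{-1}$ via the standard self-consistent equation approach, but carrying the error terms carefully enough to exploit only $4+\varkappa$ moments together with the truncation bound $|X_{jk}|\le Dn^\alpha$, $\alpha = \frac2{4+\varkappa}$. First I would write, using the Schur complement / resolvent expansion for the diagonal entries $R_{jj}$ of the resolvent $R = (\W-zI)^{-1}$, the identity
\begin{equation*}
 R_{jj} = \frac{1}{-z - \frac1n\sum_{k\neq j} X_{jk}^2 R^{(j)}_{kk} - \varepsilon_j},
\end{equation*}
where $R^{(j)}$ is the resolvent of the minor with row/column $j$ removed and $\varepsilon_j$ collects the off-diagonal quadratic form together with the $R_{jj}X_{jj}/\sqrt n$ term. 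Averaging and using $s(z)$ (the Stieltjes transform of the semicircle law satisfying $s^2 + zs + 1 = 0$), one gets $\E m_n(z) - s(z) = s(z)^2\,\E m_n(z)\,(\E m_n(z)-s(z)) + (\text{remainder})$, so that
\begin{equation*}
 |\E m_n(z) - s(z)| \le \frac{|\text{remainder}|}{|1 - s(z)^2\E m_n(z)|},
\end{equation*}
and the denominator is bounded below by a constant multiple of $\sqrt{\gamma} \sim |z^2-4|^{1/2}$ on the region $\mathbb G$ (this is the standard stability estimate; it is where the definition of $\mathbb G$ enters). So the whole problem reduces to bounding the remainder terms, each of which is an expectation of products involving $\varepsilon_j$, the fluctuation $\frac1n\sum_k(X_{jk}^2-1)R^{(j)}_{kk}$, and differences $R_{jj} - \E m_n$.

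The heart of the argument is estimating these remainder terms. I would control the quadratic-form fluctuations $\varepsilon_j$ by a Rosenthal-type inequality: $\E|\frac1n\sum_{k\ne l}X_{jk}X_{jl}R^{(j)}_{kl}|^p$ is bounded using the moments of $X_{jk}$ up to order $4+\varkappa$ and the deterministic bound $|X_{jk}|\le Dn^\alpha$ to absorb the higher powers that a naive Rosenthal bound would require. The point of the precise exponent $\alpha = 2/(4+\varkappa)$ is exactly that $n^{2\alpha}\cdot(\text{4th moment content}) = n^{2\alpha}\cdot n^{-\text{something}}$ balances so that the truncation costs nothing beyond constants; concretely, the "missing" moments of order up to $8$ (needed in the $8$-moment proof of \cite{GT:2015}) are replaced by $(Dn^\alpha)^{4-\varkappa}\cdot\mu_{4+\varkappa}$ or similar, and one checks $n^{\alpha(4-\varkappa)}\cdot n^{-?}$ does not exceed the target order. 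I would also need the a priori bounds $|R_{jj}|\le 1/v$, $\im R_{jj}\le \im m_n/v\cdot(\dots)$, and the standard delocalization/rigidity-type estimate that $\E|m_n - \E m_n|^2$ is small (of order $(nv)^{-2}$ times lower-order corrections), together with the trivial bound $\frac1n\sum_k\im R^{(j)}_{kk} \le \im m_n + \frac{C}{nv}$ for interlacing. Feeding these into the remainder and using $\im s(z)\le C\sqrt\gamma$ on $\mathbb G$ produces the two claimed terms: the $C/(nv^{3/4})$ term comes from the "main" fluctuation contribution (one factor $1/\sqrt{nv}$ from the quadratic form, distributed against $1/\sqrt\gamma$ and the stability denominator), and the $C/(n^{3/2}v^{3/2}|z^2-4|^{1/4})$ term comes from the second-order self-consistent correction (the $s^2 m_n\,(\text{Var})$ term).

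The main obstacle I anticipate is making the Rosenthal/truncation bookkeeping uniform in $z\in\mathbb G$ down to the smallest scale $v\sqrt\gamma = v_0 = A_0/n$: at that scale $nv$ can be as small as $A_0/\sqrt\gamma$, so the naive $(nv)^{-1}$ gains are weak near the edge, and one must use the extra $\gamma$-decay of $\im s$ and a bootstrap in which the bound on $|\E m_n - s|$ is first established on a coarser scale and then propagated inward (the role of $\varepsilon = c_0 v_0^{2/3}$, tying the horizontal cutoff to the vertical one, is to make this bootstrap close). A secondary technical point is handling the diagonal term $X_{jj}R_{jj}/\sqrt n$ and the difference $R^{(j)}_{kk} - R_{kk}$ via the resolvent perturbation identity without losing the gain; these contribute only lower-order terms but must be tracked. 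Everything else — the final supremum over $\mathbb G$, the choice of the constant $A_0$ large enough to dominate the implied constants in the stability estimate — is routine once the remainder bound is in hand, and then Theorem \ref{trun} follows from Theorem \ref{stieltjesmain} by the usual smoothing inequality, exactly as in \cite{GT:2015}.
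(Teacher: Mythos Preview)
Your proposal has a genuine gap in the main mechanism. You plan to solve the self-consistent equation by dividing through by the stability factor $|1-s(z)^2\E m_n(z)|\sim |z^2-4|^{1/2}$, but this costs a factor of $|z^2-4|^{-1/2}$ near the spectral edge; with a remainder of the natural size $O((nv)^{-1})$ this yields at best $O\big(\frac{1}{nv\,|z^2-4|^{1/2}}\big)$, which is strictly worse than the claimed $\frac{C}{nv^{3/4}}$ on $\mathbb G$. (Your heuristic for how ``$1/\sqrt{nv}$, $1/\sqrt\gamma$, and the stability denominator'' combine to produce $1/(nv^{3/4})$ does not balance dimensionally.) The paper never divides by the stability factor. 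It writes $\E\Lambda_n=\E\frac{T_n}{z+s(z)+m_n(z)}$ directly and expands $T_n=\frac1n\sum_j\varepsilon_jR_{jj}$ term by term. The crucial step is that the leading $\varepsilon_{j2}^2$ contribution produces exactly $L_1=-\frac{s^2(z)}{n}\E\frac{m_n'(z)}{z+m_n(z)+s(z)}$, which combines with the $\varepsilon_{j4}$ contribution $\frac1n\E\frac{m_n'(z)}{z+m_n(z)+s(z)}$ to leave $\frac{1-s^2(z)}{n}\E\frac{m_n'(z)}{z+m_n(z)+s(z)}$. Since $1-s^2(z)=s(z)\sqrt{z^2-4}$ while $|z+m_n(z)+s(z)|\ge c|z^2-4|^{1/2}$, the edge singularity \emph{cancels in the numerator} rather than appearing in a denominator. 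A further expansion of $m_n'(z)-s'(z)$ (equation~\eqref{mnderiv}) then closes the estimate. This cancellation is the heart of the argument and is absent from your plan.

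A secondary point: the Rosenthal/truncation bookkeeping and the scale-by-scale bootstrap you describe are the content of Theorem~\ref{rjj} (moment bounds for $R_{jj}$), which the paper uses as a black-box \emph{input} to the proof of Theorem~\ref{stieltjesmain}, not as its substance. The term $\frac{C}{nv^{3/4}}$ in the final bound actually arises from contributions of the form $\frac{C}{n|z^2-4|^{1/2}}$ and $\frac{C}{n\sqrt v\,|z^2-4|^{1/4}}$ (the $\mathfrak T_{22}$ estimate), obtained by conditioning on $\mathfrak M^{(j)}$ and applying H\"older with exponents tuned to $4+\varkappa$, and then using $|z^2-4|\ge cv$ on $\mathbb G$; it is not a second-moment/variance term divided by a stability factor.
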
 
The key ingredient  for the proof of Theorem \ref{trun} is an essential improvement of the dependence of bounds of the quantity $\E|\varepsilon_{j2}|^p$ in \eqref{defeps} 
on $v$. This is due to a considerably  improved estimate for off-diagonal entries of the resolvent matrix, see Lemmas \ref{e2} and \ref{e5}, compared to the previous bounds in \cite[Lemma 5.8]{GT:2015}.  
\section{Estimation of  moments of diagonal resolvent entries}
In what follows we shall assume $z\in\mathbb G$.
At first let us modify the proofs of Lemma 5.13 and Corollary 5.14 of \cite{GT:2015}.
We prove the following 
\begin{thm}\label{rjj}
Assuming the conditions of Theorem \ref{trun}, there exist constants $C_0$, $A_0$ and $A_1$ depending on $\varkappa$ and $D$ such that
\begin{equation}
\E|R_{jj}|^p\le C_0^p,
\end{equation}
for $p\le A_1(nv)^{\frac{1-2\alpha}2}$ and $v\ge A_0n^{-1}$.
\end{thm}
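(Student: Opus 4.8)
The plan is to follow the self-bounding / recursive strategy used for Lemma 5.13 in \cite{GT:2015}, but feeding in the sharper off-diagonal resolvent bounds (Lemmas \ref{e2} and \ref{e5}) so that the error term $\varepsilon_{j2}$ is controlled on the whole region $\mathbb G$ down to $v\ge A_0n^{-1}$. First I would recall the Schur complement representation
\begin{equation}\notag
R_{jj}=\frac1{-z-\E m_n(z)-\varepsilon_j}, \qquad \varepsilon_j:=\varepsilon_{j1}+\varepsilon_{j2}+\varepsilon_{j3}+\varepsilon_{j4},
\end{equation}
where $\varepsilon_{j1}=\tfrac1n X_{jj}$, $\varepsilon_{j2}=\tfrac1n\sum_{k,l}^{(j)} X_{jk}X_{jl}R^{(j)}_{kl}-\tfrac1n\sum_k^{(j)}(X_{jk}^2-1)R^{(j)}_{kk}$ type fluctuation terms, $\varepsilon_{j3}$ the difference $\tfrac1n\Tr R^{(j)}-\tfrac1n\Tr R$, and $\varepsilon_{j4}=\tfrac1n\Tr R-\E m_n(z)$ (exactly as in \cite{GT:2015}). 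Since $\im(-z-\E m_n(z))\ge v$, one has the deterministic bound $|R_{jj}|\le 1/(v+|\varepsilon_j|)$ only in the wrong direction; instead I would use that $|R_{jj}|\le 1/\im(z+\E m_n(z)+\varepsilon_j)$ is not available either, so the right move is the standard one: write $|R_{jj}|\le \min\{v^{-1}, \; |z+\E m_n(z)+\varepsilon_j|^{-1}\}$ and, on the event where $|\varepsilon_j|$ is not too large compared with $|z+\E m_n(z)|$ (which on $\mathbb G$ is bounded below by a constant times $\sqrt\gamma$ thanks to the a priori bound $|\E m_n(z)-s(z)|\le$ small, Theorem \ref{stieltjesmain} or its weaker predecessor), $|R_{jj}|$ is $O(1)$.

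The key step is therefore to show $\E|\varepsilon_j|^p\le (C/\sqrt{nv})^{p}$ (say) for $p$ up to $A_1(nv)^{(1-2\alpha)/2}$, so that the "bad" event has probability at most $(C'/\sqrt{nv}\cdot 1/\sqrt\gamma)^{p}$, which is summable and contributes only $O(1)$ to the $p$-th moment after splitting $\E|R_{jj}|^p = \E|R_{jj}|^p\mathbb I_{\text{good}} + \E|R_{jj}|^p\mathbb I_{\text{bad}}$ and bounding the second piece by $v^{-p}\Pr(\text{bad})$. For $\varepsilon_{j1}$ this is immediate from $|X_{jj}|\le Dn^\alpha$ and $\E|X_{jj}|^{4+\varkappa}\le\mu_{4+\varkappa}$. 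For $\varepsilon_{j2}$ I would apply a Rosenthal / Burkholder inequality for the quadratic form, which reduces everything to moments of the off-diagonal entries $R^{(j)}_{kl}$ and of $\tfrac1n\sum_k |R^{(j)}_{kk}|^2 = \tfrac1{nv}\im m_n^{(j)}$; here the improved Lemmas \ref{e2} and \ref{e5} give $\E|R^{(j)}_{kl}|^p$ with a much better $v$-dependence than \cite[Lemma 5.8]{GT:2015}, which is exactly what lets the admissible range of $p$ reach order $(nv)^{(1-2\alpha)/2}$. The terms $\varepsilon_{j3},\varepsilon_{j4}$ are handled as in \cite{GT:2015}: $\varepsilon_{j3}$ by the interlacing bound $|\tfrac1n\Tr R-\tfrac1n\Tr R^{(j)}|\le \tfrac1{nv}$, and $\varepsilon_{j4}$ by a martingale-difference decomposition over rows together with the same off-diagonal estimates.

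The argument is genuinely recursive: the bounds on $\E|R^{(j)}_{kl}|^p$ in Lemmas \ref{e2}, \ref{e5} are themselves stated in terms of quantities like $\E|R_{jj}|^p$ (for the minor), so I would run a bootstrap — first establish $\E|R_{jj}|^p\le C_0^p$ for a crude range of $p$ (e.g. $p\le c\log n$, using only the trivial $|R_{jj}|\le v^{-1}$ and $v\ge A_0 n^{-1}$), then insert this into the quadratic-form estimate to enlarge the range, iterating until the self-consistent range $p\le A_1(nv)^{(1-2\alpha)/2}$ is reached. The constants $A_0, A_1$ must be chosen so that at each stage the combinatorial factors from Rosenthal's inequality (which grow like $p^{p}$ or $p^{p/2}$) are absorbed: this is where the exponent $(1-2\alpha)/2$ comes from, since $p^p\cdot (\text{factor})^p$ stays bounded by $C_0^p$ precisely when $p\le A_1(nv)^{(1-2\alpha)/2}$, the $\alpha=\tfrac2{4+\varkappa}$ entering through the truncation level $Dn^\alpha$ in the Rosenthal bound for the $(4+\varkappa)$-th moment contributions. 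I expect the main obstacle to be exactly this bookkeeping of the admissible $p$-range through the bootstrap: one has to verify that each iteration strictly enlarges the range and that the limiting range is the claimed one, while keeping the constant $C_0$ from blowing up — in other words, making the fixed-point argument for $(C_0, A_1)$ close.
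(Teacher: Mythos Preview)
Your proposal has the right spirit but two structural steps do not close. First, the good/bad-event splitting cannot reach $v\ge A_0n^{-1}$: on the bad event you use $|R_{jj}|\le v^{-1}$, so its contribution is $v^{-p}\Pr(|\varepsilon_j|>c)\le\big(C/(c\,n^{1/2}v^{3/2})\big)^p$ via Markov and your claimed bound $\E|\varepsilon_j|^p\le(C/\sqrt{nv})^p$, and this diverges for $v\sim n^{-1}$. Your proposed starting point for the bootstrap has the same defect: $|R_{jj}|\le v^{-1}$ with $v\ge A_0/n$ gives only $\E|R_{jj}|^p\le(n/A_0)^p$, not $C_0^p$. (Separately, invoking Theorem \ref{stieltjesmain} for the lower bound on $|z+\E m_n(z)|$ is circular, since that theorem is proved \emph{using} Theorem \ref{rjj}.) The paper avoids any event-splitting: it uses the multiplicative identity $R_{jj}=s(z)+s(z)(\varepsilon_j+\Lambda_n)R_{jj}$, hence $|R_{jj}|\le 1+C(|\varepsilon_j|+\sqrt{|T_n|})|R_{jj}|$, and Cauchy--Schwarz then gives $\E|R_{jj}|^p\le 3^p+(\text{small})\cdot\E^{1/2}|R_{jj}|^{2p}$; the random quantity $\Lambda_n$ is handled by $|\Lambda_n|\le C\sqrt{|T_n|}$ on $\mathbb G$ (\cite[Lemma 5.9]{GT:2015}), which feeds back into the same $\varepsilon$-moments without circularity.

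Second, that last inequality requires the $2p$-th moment on the right-hand side, so a bootstrap purely in $p$ at fixed $v$ cannot advance. The missing idea is that the bootstrap is in $v$, not in $p$: one starts at $v$ of order $1$, where $|R_{jj}|\le 1$ trivially for all $p$, and descends by the specific factor $s=2^{2/(1-2\alpha)}$. The point of this $s$ is that $p\le A_1(nv_1/s)^{(1-2\alpha)/2}$ is equivalent to $2p\le A_1(nv_1)^{(1-2\alpha)/2}$, so the $2p$-moment bound needed at level $v_1/s$ is exactly what the inductive hypothesis at level $v_1$ supplies (this is Lemma \ref{rjj2}, combined with Lemma \ref{e5} for the off-diagonal entries). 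In other words, the allowed $p$-range shrinks together with $v$ in a way that is precisely compatible with the Cauchy--Schwarz doubling; your scheme of enlarging the $p$-range at fixed $v$ runs against this mechanism rather than with it.
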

In order to prove this result we shall 
need the following Lemmas. Let $s=2^{\frac2{1-2\alpha}}$.
\begin{lem}\label{e1} 
Assuming the conditions of Theorem \ref{trun}, we have, for all $v\ge v_0$
\begin{equation}
\E|\varepsilon_{j1}|^{2p}\le C^pn^{-p(1-2\alpha)}.
\end{equation}
\end{lem}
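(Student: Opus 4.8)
The plan is to estimate $\E|\varepsilon_{j1}|^{2p}$ by expanding the definition of $\varepsilon_{j1}$ and applying a martingale-type moment inequality (Rosenthal / Burkholder) together with the bound $|X_{jk}|\le Dn^\alpha$. Recall that in the decomposition of the resolvent the term $\varepsilon_{j1}$ is the quadratic-form-type contribution
\[
\varepsilon_{j1}=\frac1n\sum_{k\ne j}\big(X_{jk}^2-\E X_{jk}^2\big)R^{(j)}_{kk},
\]
so that $\varepsilon_{j1}$ is a sum over $k$ of centered terms, where $R^{(j)}$ denotes the resolvent of the matrix with the $j$-th row and column removed (hence independent of the $X_{jk}$). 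First I would condition on $R^{(j)}$ and treat $\varepsilon_{j1}$ as a sum of independent centered random variables $\xi_k:=\frac1n(X_{jk}^2-\E X_{jk}^2)R^{(j)}_{kk}$ in $k$. By Rosenthal's inequality,
\[
\E_{X_{j\cdot}}|\varepsilon_{j1}|^{2p}\le C^p\Big[\Big(\sum_{k\ne j}\E|\xi_k|^2\Big)^p+\sum_{k\ne j}\E|\xi_k|^{2p}\Big].
\]

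For the first (Gaussian-like) term, $\E|\xi_k|^2\le \frac1{n^2}\,\E|X_{jk}^2-\E X_{jk}^2|^2\,|R^{(j)}_{kk}|^2\le \frac{C}{n^2}|R^{(j)}_{kk}|^2$ using the fourth-moment bound from $\mu_{4+\varkappa}$; summing over $k$ gives $\frac{C}{n^2}\sum_k|R^{(j)}_{kk}|^2\le \frac{C}{n^2}\,\Tr|R^{(j)}|^2\le \frac{C}{n^2}\cdot\frac{n}{v}\cdot\frac1{?}$ — more precisely $\sum_k|R^{(j)}_{kk}|^2\le \frac1v\,\im\Tr R^{(j)}$ is not quite the trick; instead one uses $|R^{(j)}_{kk}|\le \frac1v$ to get $\sum_k|R^{(j)}_{kk}|^2\le \frac{n}{v^2}$, hence this term is at most $\big(\frac{C}{nv^2}\big)^p$. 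This is too weak on its own; the point is that on the region $\mathbb G$ one has $v\ge v_0=A_0n^{-1}$ and, after the bootstrap carried out via Theorem \ref{rjj}, $|R^{(j)}_{kk}|$ is typically $O(1)$, so effectively $\sum_k\E|\xi_k|^2\le \frac{C}{n}\cdot\frac1n\sum_k\E|R^{(j)}_{kk}|^2\le \frac{C}{n}$, which is $n^{-1}\le n^{-(1-2\alpha)}$ since $\alpha=\frac2{4+\varkappa}\le\frac12$. For the second term, $\E|\xi_k|^{2p}\le \frac1{n^{2p}}\big(2D^2n^{2\alpha}\big)^{2p-?}\E|X_{jk}^2-\E X_{jk}^2|^2\,|R^{(j)}_{kk}|^{2p}$: pulling out $2p-1$ powers of the a.s. bound $|X_{jk}^2-\E X_{jk}^2|\le 2D^2n^{2\alpha}$ and one second moment leaves $\frac{C}{n^{2p}}(n^{2\alpha})^{2p-1}|R^{(j)}_{kk}|^{2p}$; summing over $k$ and using $|R^{(j)}_{kk}|\le 1/v\le n/A_0$ is again too crude, but with the $O(1)$ bound on the diagonal resolvent entries this is $\le n\cdot\frac{C^p n^{2\alpha(2p-1)}}{n^{2p}}=C^p n^{-2p(1-2\alpha)}n^{1-2\alpha}\le C^p n^{-p(1-2\alpha)}$ for $p\ge 1$. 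Combining the two contributions and then taking expectation over $R^{(j)}$ (using Theorem \ref{rjj} to control $\E|R^{(j)}_{kk}|^{2p}$ by $C_0^{2p}$ in the stated range of $p$) yields the claim.

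The main obstacle is the circularity: the clean bound requires $\E|R^{(j)}_{kk}|^{2p}=O(C_0^{2p})$, which is exactly the content of Theorem \ref{rjj}, whose proof in turn uses Lemma \ref{e1}. The way to break the loop is the standard bootstrapping scheme of \cite{GT:2015}: one first establishes the a priori bound $|R^{(j)}_{kk}|\le 1/v$, runs the above estimate with that crude bound to get a preliminary (non-optimal in $v$) control on $\E|\varepsilon_{j1}|^{2p}$, feeds this into the self-consistent equation for $\E m_n$ to improve the bound on $\E|R_{jj}|^p$, and iterates until the bound stabilizes at $C_0^p$ on the region $\mathbb G$; only then does the estimate of Lemma \ref{e1} hold in the sharp form $C^p n^{-p(1-2\alpha)}$, valid for $p$ up to $A_1(nv)^{(1-2\alpha)/2}$. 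Care must also be taken that the constant $C$ absorbs $D$, $\mu_{4+\varkappa}$ and the combinatorial Rosenthal constant uniformly, and that the restriction $v\ge v_0$ is used only through $\varepsilon^{3/2}=2v_0a$ to keep $\gamma(z)$ and the spectral parameter away from the edge — none of which affects the form of the bound, since the right-hand side $C^pn^{-p(1-2\alpha)}$ does not involve $v$ at all.
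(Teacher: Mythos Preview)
You have misidentified the quantity $\varepsilon_{j1}$. In this paper (see \eqref{defeps}) the decomposition is
\[
\varepsilon_{j1}=\frac1{\sqrt n}X_{jj},\qquad
\varepsilon_{j2}=-\frac1n\sum_{l\ne k\in\mathbb T_j}X_{jl}X_{jk}R^{(j)}_{lk},\qquad
\varepsilon_{j3}=-\frac1n\sum_{l\in\mathbb T_j}(X_{jl}^2-1)R^{(j)}_{ll},
\]
so $\varepsilon_{j1}$ is simply the (scaled) diagonal entry, \emph{not} the diagonal quadratic form $\frac1n\sum_{k\ne j}(X_{jk}^2-\E X_{jk}^2)R^{(j)}_{kk}$ that you wrote down --- that is $\varepsilon_{j3}$ (up to sign). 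Consequently the entire Rosenthal argument, the appeal to bounds on $|R^{(j)}_{kk}|$, and the circularity issue you worry about are all irrelevant to Lemma~\ref{e1}.

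The paper's proof is a single line: from the truncation hypothesis $|X_{jj}|\le Dn^{\alpha}$ of Theorem~\ref{trun} one has the deterministic bound
\[
|\varepsilon_{j1}|=\frac{|X_{jj}|}{\sqrt n}\le Dn^{\alpha-\frac12}=Dn^{-\frac{1-2\alpha}{2}},
\]
and raising to the power $2p$ gives $\E|\varepsilon_{j1}|^{2p}\le D^{2p}n^{-p(1-2\alpha)}$, which is the claim with $C=D^2$. No resolvent bounds, no bootstrap, and no restriction on $p$ are needed here; the condition $v\ge v_0$ plays no role. The argument you sketched is, in spirit, close to what the paper does for $\varepsilon_{j3}$ in Lemma~\ref{e3}, where Rosenthal's inequality and the assumed bound \eqref{h0} on $\E|R^{(j)}_{ll}|^q$ are indeed the tools used.
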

\begin{proof}
Note that 
\begin{equation}
|\varepsilon_{j1}|\le |X_{jj}|/\sqrt n\le Dn^{-\frac{1-2\alpha}2}.
\end{equation}
This inequality concludes the proof of Lemma \ref{e1}.  
\end{proof}

\begin{lem}\label{e3}
Assuming condition \eqref{h0} for $v\ge v_1$, we have, for all $v\ge v_1/s$
\begin{equation}
\E|\varepsilon_{j3}|^{2p}\le C^pp^{2p}n^{-2p(1-2\alpha)}(sH_0)^{2p}.
\end{equation}
\end{lem}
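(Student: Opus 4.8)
The plan is to recognize that $\varepsilon_{j3}$ is the quadratic-form-type error term obtained by removing the $j$-th row and column and comparing with a resolvent of the minor; its $2p$-th moment should be controlled by a martingale/Rosenthal-type inequality combined with the a priori control of the diagonal (and off-diagonal) resolvent entries of the minor valid under condition \eqref{h0} on the range $v\ge v_1$. Concretely, I would first write $\varepsilon_{j3}=\frac1n\sum_{k,l}^{(j)}(X_{jk}X_{jl}-\delta_{kl})R^{(j)}_{kl}$ (the centered quadratic form in the entries of the $j$-th column against the resolvent $R^{(j)}$ of the matrix with the $j$-th row/column deleted), so that $\varepsilon_{j3}$ is centered given $\mathbf X^{(j)}$. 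The diagonal part $\frac1n\sum_k(X_{jk}^2-1)R^{(j)}_{kk}$ and the off-diagonal part $\frac1n\sum_{k\ne l}X_{jk}X_{jl}R^{(j)}_{kl}$ are treated separately.

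Second, I would condition on $\mathbf X^{(j)}$ and apply a Rosenthal/Burkholder inequality for the quadratic form in the independent, mean-zero, bounded variables $X_{jk}$ (bounded by $Dn^\alpha$). For the off-diagonal part this gives, up to a factor $C^p p^{2p}$ (the $p^{2p}$ being the price of the $2p$-th moment of a martingale with $p$ "independent-ish" increments, exactly as in the standard Hanson–Wright-type bounds), a bound in terms of $\E\,(\frac1{n^2}\sum_{k\ne l}|R^{(j)}_{kl}|^2)^p$ plus a lower-order term from the boundedness. Now $\sum_{k,l}|R^{(j)}_{kl}|^2=\Tr R^{(j)}(R^{(j)})^*=\frac1v\im\Tr R^{(j)}\le \frac{n}{v}\cdot(\text{something like }|R^{(j)}_{kk}|)$, so using condition \eqref{h0} (which I read as the a priori bound $\E|R^{(j)}_{kk}|^p\le C^p$, i.e.\ the conclusion of Theorem \ref{rjj}-type control for the minor, valid for $v\ge v_1$) one gets $\E(\frac1{n^2}\sum_{k\ne l}|R^{(j)}_{kl}|^2)^p\le (Cnv)^{-p}H_0^{2p}$. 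Combining with $n^{-\alpha}$-type gains from the bounded quadratic-form increments — each increment contributes at most $D^2n^{2\alpha-1}$ times a resolvent factor, and $(nv)^{-1}\le s^{2}(nv_1)^{-1}$ on $v\ge v_1/s$ is absorbed into the $s^{2p}$ factor — produces the claimed $C^p p^{2p} n^{-2p(1-2\alpha)}(sH_0)^{2p}$. The diagonal part is smaller (only $p^p$, and the same $\frac1n\sum|R^{(j)}_{kk}|$ factor), hence dominated by the off-diagonal contribution.

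Third, I would carefully track the shift from $v\ge v_1$ to $v\ge v_1/s$: condition \eqref{h0} is only assumed on $v\ge v_1$, but the Lemma asserts a bound on the larger range $v\ge v_1/s$. The point is that the resolvent entries on $v\ge v_1/s$ can be compared to those at the larger spectral parameter $v$ via the monotonicity/Lipschitz estimates $|R_{kk}(u+iv/s)|\le s\,|R_{kk}(u+iv)|$-type inequalities (or the standard $\im R_{kk}(z)$ being comparable after rescaling $v$ by a bounded factor), which is exactly where the explicit constant $s=2^{2/(1-2\alpha)}$ and the factor $(sH_0)^{2p}$ enter. This rescaling step, and making the bookkeeping of the various $C^p$, $p^{2p}$, $s^{2p}$ and $n^{-2p(1-2\alpha)}$ factors come out cleanly, is the main obstacle; the probabilistic heart (Rosenthal for the quadratic form plus the trace identity for $\sum|R^{(j)}_{kl}|^2$) is routine once the right conditioning is in place. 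I expect the actual proof in the paper to invoke Lemma \ref{e2} and Lemma \ref{e5} (the improved off-diagonal resolvent bounds advertised in the introduction) to supply precisely the $H_0$-dependent estimate for $\E(\frac1{n^2}\sum_{k\ne l}|R^{(j)}_{kl}|^2)^p$, so that Lemma \ref{e3} is essentially a corollary of those together with the boundedness assumption on the entries.
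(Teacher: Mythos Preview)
You have misidentified the object. In this paper (see \eqref{defeps}), $\varepsilon_{j3}=-\frac1n\sum_{l\in\mathbb T_j}(X_{jl}^2-1)R^{(j)}_{ll}$ is \emph{only} the diagonal part of the centered quadratic form; the off-diagonal part $\frac1n\sum_{l\ne k}X_{jl}X_{jk}R^{(j)}_{lk}$ is $\varepsilon_{j2}$ and is handled separately in Lemma~\ref{e2}. Your proposal treats $\varepsilon_{j3}$ as the full quadratic form and then spends almost all the work on the off-diagonal contribution (Hanson--Wright constants, the trace identity $\sum_{k,l}|R^{(j)}_{kl}|^2=\frac1v\im\Tr R^{(j)}$, Lemmas~\ref{e2} and \ref{e5}). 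None of that is relevant to $\varepsilon_{j3}$, and the paper's proof invokes neither Lemma~\ref{e2} nor Lemma~\ref{e5}.

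Because of this misidentification you also locate the factors $p^{2p}$ and $n^{-2p(1-2\alpha)}$ in the wrong place. The paper's argument is a one-line application of the ordinary Rosenthal inequality to the sum of independent scalars $(X_{jl}^2-1)R^{(j)}_{ll}$:
\[
\E|\varepsilon_{j3}|^{2p}\le C^p\Big(p^p\mu_4^p n^{-p}\E\Big(\tfrac1n\sum_l|R^{(j)}_{ll}|^2\Big)^p+p^{2p}n^{-2p}\mu_{4p}\sum_l\E|R^{(j)}_{ll}|^{2p}\Big).
\]
The dominant contribution is the \emph{second} Rosenthal term: the truncation $|X_{jl}|\le Dn^{\alpha}$ gives $\mu_{4p}\le C^p n^{4p\alpha-2}\mu_{4+\varkappa}$ (cf.\ \eqref{mom4}), and together with $\sum_l\E|R^{(j)}_{ll}|^{2p}\le n(sH_0)^{2p}$ this produces $p^{2p}n^{-2p(1-2\alpha)}(sH_0)^{2p}$. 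Your remark that ``the diagonal part is smaller (only $p^p$)'' is exactly wrong here: the $p^p$ variance term is the subdominant one, since $4\alpha>1$ forces $n^{-p}\le n^{-2p(1-2\alpha)}$. The $(sH_0)^{2p}$ does come from pushing condition~\eqref{h0} from $v\ge v_1$ down to $v\ge v_1/s$, as you guessed, but it is applied only to the diagonal entries $R^{(j)}_{ll}$.
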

\begin{proof}
Recall that
\begin{equation}
\varepsilon_{j3}=\frac1n\sum_{l\in\mathbb T_j}(X_{jl}^2-1)R^{(j)}_{ll}.
\end{equation}
Applying Rosenthal's inequality, we get
\begin{equation}
\E|\varepsilon_{j3}|^{2p}\le C^p\Bigg(p^p\mu_4^pn^{-p}\E\Big(\frac1n\sum_{l\in\mathbb T_j}|R^{(j)}_{ll}|^2\Big)^p+p^{2p}n^{-2p}\mu_{4p}\sum_{l\in\mathbb T_j}\E|R^{(j)}_{ll}|^{2p}\Bigg).
\end{equation}
By the assumptions of Theorem \ref{trun} , we get
\begin{equation}
\E|\varepsilon_{j3}|^{2p}\le C^p(sH_0)^{2p}(p^pn^{-p}+p^{2p}n^{-2p(1-2\alpha)}).
\end{equation}
Here we used that $4\alpha>1$ and $\alpha<\frac12$.
These relations conclude the proof of Lemma \ref{e3}. 
\end{proof}

\begin{lem}\label{e5}
Assume that for some $1\ge v_1\ge v_0$ there exists a sufficiently large constant $H_0$ such that for any $\mathbb J$ with cardinality $|\mathbb J|\le \log_s(nv)$
and $q\le A_1(nv)^{\frac{1-2\alpha}2}$ the inequality
\begin{equation}\label{h0}
\E|R^{(\mathbb J)}_{jk}|^q\le H_0^q
\end{equation}
holds for any $j,k\in\mathbb T\setminus\mathbb J$ and any $v\ge v_1$.
Then   this inequality still holds for any $v\ge v_1/s$ and $j\ne k\in\mathbb T\setminus\mathbb J$.
\end{lem}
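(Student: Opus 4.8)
\textbf{Proof proposal for Lemma \ref{e5}.}
The plan is to run a bootstrap (self-improving) estimate on the off-diagonal resolvent entries, using the standard resolvent identity that expresses $R^{(\mathbb J)}_{jk}$, for $j\neq k$, in terms of resolvent entries of the doubly-reduced matrix. Concretely, writing $\mathbb J'=\mathbb J\cup\{j\}$ (or $\mathbb J\cup\{k\}$), one has an identity of the schematic form
\begin{equation*}
R^{(\mathbb J)}_{jk} = -R^{(\mathbb J)}_{jj}\Bigl(\tfrac1{\sqrt n}X_{jk}R^{(\mathbb J')}_{kk} + \varepsilon'_{jk}\Bigr),\qquad \varepsilon'_{jk}:=\tfrac1n\sum_{l,l'\in\mathbb T\setminus\mathbb J'} X_{jl}X_{jl'}R^{(\mathbb J')}_{lk}\ (\text{with the }l=l'=k\text{ term removed}),
\end{equation*}
so that bounding $\E|R^{(\mathbb J)}_{jk}|^q$ reduces to bounding the $q$-th moment of a martingale-type quadratic form in the independent entries $\{X_{jl}\}_{l}$, with coefficients $R^{(\mathbb J')}_{lk}$ that are independent of row $j$. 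The diagonal factor $R^{(\mathbb J)}_{jj}$ is controlled by the hypothesis \eqref{h0} (applied with the diagonal index, at the level $v\ge v_1$), or rather by Theorem \ref{rjj}; one absorbs it by a Cauchy--Schwarz / Hölder split at the cost of a fixed power of a universal constant.

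The key step is a Rosenthal / Burkholder-type bound for $\varepsilon'_{jk}$, exactly in the spirit of the estimates for $\varepsilon_{j3}$ in Lemma \ref{e3} and for $\varepsilon_{j2}$ in \eqref{defeps}. The quadratic form splits into a diagonal part $\tfrac1n\sum_l (X_{jl}^2-1)R^{(\mathbb J')}_{lk}$ (handled as in Lemma \ref{e3}) plus a genuinely off-diagonal bilinear part; conditioning on the $\sigma$-algebra generated by all entries not in row $j$, both pieces are sums of martingale differences, and Rosenthal's inequality gives
\begin{equation*}
\E\bigl(|\varepsilon'_{jk}|^{q}\,\big|\,\mathcal F_j\bigr) \le C^q q^{q} n^{-q/2}\Bigl(\tfrac1n\sum_{l}|R^{(\mathbb J')}_{lk}|^2\Bigr)^{q/2} + C^q q^{2q} n^{-q} n^{2q\alpha}\,\tfrac1n\sum_l |R^{(\mathbb J')}_{lk}|^{q},
\end{equation*}
where the factor $n^{2q\alpha}$ comes from the truncation bound $|X_{jl}|\le Dn^\alpha$ being used on the high-order moment $\mu_{2q}$. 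Here the two crucial inputs are: (i) the \emph{sum rule} $\sum_{l} |R^{(\mathbb J')}_{lk}|^2 = \tfrac1v\,\im R^{(\mathbb J')}_{kk}$, which converts the $\ell^2$-norm of the column into a single diagonal entry, giving $\tfrac1n\sum_l|R^{(\mathbb J')}_{lk}|^2\le \tfrac1{nv}|R^{(\mathbb J')}_{kk}|$; and (ii) the induction hypothesis \eqref{h0} at level $\mathbb J'$ (whose cardinality is $|\mathbb J|+1\le\log_s(nv)+1$ — here one must check the log-budget is not exceeded, which is why the cardinality bound is stated with slack) to control both $\E|R^{(\mathbb J')}_{kk}|^{q}$ and $\E|R^{(\mathbb J')}_{lk}|^{q}$ by $H_0^q$.

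Combining, the first term contributes $C^q q^{q}(nv)^{-q/2}H_0^q$ and the second contributes $C^q q^{2q}n^{-q(1-2\alpha)}H_0^q$; using $q\le A_1(nv)^{(1-2\alpha)/2}$ one checks $q^{q}(nv)^{-q/2}\le (A_1)^q (nv)^{-q\alpha}\le A_1^q$ and $q^{2q}n^{-q(1-2\alpha)}\le A_1^{2q}(nv)^{q(1-2\alpha)}n^{-q(1-2\alpha)}=A_1^{2q}v^{q(1-2\alpha)}\le A_1^{2q}$ since $v\le 1$; likewise the $X_{jk}R^{(\mathbb J')}_{kk}$ term is of size $Dn^{\alpha-1/2}H_0 = Dn^{-(1-2\alpha)/2}H_0\le H_0$. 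Multiplying back the diagonal factor $R^{(\mathbb J)}_{jj}$ (bounded by a constant $C_0$ to the $q$ with large probability, the exceptional set absorbed via the crude deterministic bound $|R^{(\mathbb J)}_{jk}|\le v^{-1}\le A_0^{-1}n$ raised to a power dominated by $e^{-cq\log n}$), one obtains $\E|R^{(\mathbb J)}_{jk}|^q \le (C\max(C_0,D)A_1)^{q} H_0^q$, and since $H_0$ is \emph{sufficiently large} (chosen after the absolute constants), the prefactor is $\le H_0^q$, closing the induction and extending the bound from $v\ge v_1$ to $v\ge v_1/s$.

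\textbf{Main obstacle.} The delicate point is bookkeeping the dependence on the index set $\mathbb J$: each application of the resolvent identity enlarges $\mathbb J$ by one, so one must guarantee that the recursion terminates before $|\mathbb J|$ exceeds $\log_s(nv)$, and simultaneously that descending from scale $v$ to $v/s$ does not blow up the constants — this is exactly why the scale factor is taken to be $s=2^{2/(1-2\alpha)}$, so that $q^{q}(nv/s)^{-q/2}$ loses only a bounded factor per step. A secondary technical nuisance is that $R^{(\mathbb J')}_{kk}$ appears inside the $\ell^2$ sum multiplied by $\tfrac1{nv}$ rather than being bounded outright, so one genuinely needs the a~priori control of diagonal entries (Theorem \ref{rjj}) available at the \emph{same} scale $v\ge v_1/s$, which forces the diagonal and off-diagonal bootstraps to be run in tandem rather than sequentially.
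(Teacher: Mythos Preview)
Your proposal has a genuine gap at the heart of the descent in $v$. You repeatedly invoke the hypothesis \eqref{h0} to bound $\E|R^{(\mathbb J')}_{lk}|^q$ and $\E|R^{(\mathbb J')}_{kk}|^q$ at the target scale $v\in[v_1/s,v_1)$, but \eqref{h0} is only assumed for $v\ge v_1$; nothing in your sketch transfers it downward. The paper's proof supplies exactly this missing step via a perturbation in the spectral parameter: from the resolvent identity $R(z_1)-R(z_2)=(z_1-z_2)R(z_1)R(z_2)$ one gets
\[
\bigl|R^{(\mathbb J,j)}_{lk}(u+iv)-R^{(\mathbb J,j)}_{lk}(u+isv)\bigr|
\le \sqrt{s}\,\sqrt{\bigl|R^{(\mathbb J,j)}_{ll}(u+isv)\bigr|\,\bigl|R^{(\mathbb J,j)}_{kk}(u+iv)\bigr|},
\]
and since $sv\ge v_1$ the hypothesis applies at $u+isv$, while the difference is controlled by diagonal entries. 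This is the mechanism that actually bridges the scales, and it is absent from your argument. Your remark in the ``Main obstacle'' paragraph that one must ``descend from scale $v$ to $v/s$ without blowing up constants'' identifies the issue correctly but does not resolve it; the bookkeeping you describe concerns $|\mathbb J|$, which is not where the difficulty lies (the hypothesis is already uniform in $\mathbb J$ up to the stated cardinality). Also, invoking Theorem~\ref{rjj} for the diagonal factor is circular: that theorem is proved \emph{via} Lemmas~\ref{e5} and~\ref{rjj2}.

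There is a second problem in your closure of the bootstrap. You arrive at $\E|R^{(\mathbb J)}_{jk}|^q\le \bigl(C\max(C_0,D)A_1\bigr)^q H_0^q$ and then claim that ``since $H_0$ is sufficiently large, the prefactor is $\le H_0^q$''. But enlarging $H_0$ makes a bound of the form $K^q H_0^q$ \emph{worse}, not better; you cannot absorb an absolute-constant prefactor this way. The paper's closure is different: it retains the decaying factors $(nv)^{-q/2}$ and $n^{-1}$ explicitly, obtaining
\[
\E|R^{(\mathbb J)}_{jk}|^q\le H_0^q\Bigl(\Bigl(\tfrac{Cqs^3H_0}{nv}\Bigr)^{q/2}+\tfrac1{\sqrt n}\Bigl(\tfrac{Cqs^2H_0}{n^{(1-2\alpha)/2}}\Bigr)^q\Bigr),
\]
and then chooses $A_0$ (hence $nv\ge A_0$) large so that the bracket is at most $1$. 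In your computation you discarded the $(nv)^{-q\alpha}$ decay after writing it down, and your second term has no residual decay at all (you bounded $v^{q(1-2\alpha)}\le 1$), so there is nothing left to make the prefactor small.
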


\begin{proof}
We use the representation
\begin{equation}
R_{jk}^{(\mathbb J)}=-\Big(\frac1{\sqrt n}\sum_{l\in\mathbb T_{\mathbb J}}X_{jl}R^{(\mathbb J,j)}_{lk}\Big)R_{kk}.
\end{equation}
Applying Cauchy's and Rosenthal's inequalities, we get
\begin{align}
\E|R_{jk}^{(\mathbb J)}|^q&\le H_0^qs^q\Big(C^qn^{-\frac q2}\E^{\frac12}\big(\sum_{l\in\mathbb T_{\mathbb J}}|R^{(\mathbb J,j)}_{lk}|^2)^{q}+C^qq^qn^{-\frac q2}\mu_{2q}^{\frac12}\E^{\frac12}|R^{\mathbb J,j}_{kk}|^{2q}\notag\\&+
C^qq^qn^{-\frac q2}\mu_{2q}^{\frac12}\E^{\frac12}\big(\sum_{l\in\mathbb T_{\mathbb J,j,k}}|R^{(\mathbb J,j)}_{lk}|^{2q}\big)\Big).
\end{align}
Applying now \cite[Lemma 7.6]{GT:2015},  inequality \eqref{mom4} and the assumptions of Lemma \ref{e3}, we get
\begin{align}\label{11}
\E|R_{jk}^{(\mathbb J)}|^q&\le H_0^qs^q\Big(C^qq^{\frac q2}n^{-\frac q2}v^{-\frac q2}\E^{\frac12}\big(\im R^{(\mathbb J,j)}_{kk})^{q}+C^qq^qn^{-\frac q2(1-2\alpha)}n^{-1}H_0^{q}s^q\notag\\&+
C^qq^qn^{-\frac q2(1-2\alpha)}n^{-1}\E^{\frac12}\big(\sum_{l\in\mathbb T_{\mathbb J,j,k}}|R^{(\mathbb J,k)}_{lk}|^{2q}\big)\Big).
\end{align}
Note that
\begin{equation}
|R^{(\mathbb J,j)}_{lk}(u+isv)-R^{(\mathbb J,j)}_{lk}(u+iv)|\le (s-1)v|[\mathbf R^{(\mathbb J,j)}(u+isv)\mathbf R^{(\mathbb J,j)}(u+isv)]_{lk}|.
\end{equation}
Applying the Cauchy --Schwartz inequality and \cite[Lemma ]{GT:2015}, we  get
\begin{equation}
|R^{(\mathbb J,j)}_{lk}(u+isv)-R^{(\mathbb J,j)}_{lk}(u+iv)|\le \sqrt s\sqrt{|R^{(\mathbb J,j)}_{ll}(u+isv)||R^{(\mathbb J,j)}_{kk}(u+iv)|}.
\end{equation}
Using now condition \eqref{h0}, we obtain
\begin{equation}\label{13}
\E|R^{(\mathbb J,j)}_{lk}(u+iv)|^q\le 2^q\E|R^{(\mathbb J,j)}_{lk}(u+isv)|^q+ 2^q(sH_0)^q\le 2^{q+1}s^q(sH_0)^q.
\end{equation}
Inequalities \eqref{11} and \eqref{13} together imply
\begin{align}
\E|R_{jk}^{(\mathbb J)}|^q\le H_0^q\left(\left(\frac{Cqs^3H_0}{nv}\right)^{\frac q2}+\frac1{\sqrt n}\left(\frac{4Cqs^2H_0}{n^{\frac{1-2\alpha}2}}\right)^q\right).
\end{align}
We may choose the constant $A_0$ such that for sufficiently large $n$
\begin{align}
\E|R_{jk}^{(\mathbb J)}|^q\le H_0^q.
\end{align}
 This completes the proof of Lemma \ref{e5}.
\end{proof}

\begin{lem}
\label{e2}Assuming condition \eqref{h0} for $v\ge v_1$, 
 we have, for all $v\ge v_0$ and $v\ge v_1/s$,
\begin{equation}
\E|\varepsilon_{j2}|^{2p}\le \left(\frac{Cp^4s^2H_0^2}{(nv)^{2(1-2\alpha)}}\right)^{p}.
\end{equation}
\end{lem}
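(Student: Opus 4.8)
\medskip\noindent\emph{Sketch of the intended proof.}
The plan is to estimate $\varepsilon_{j2}=\tfrac1n\sum_{k\ne l,\;k,l\in\mathbb T_j}X_{jk}X_{jl}R^{(j)}_{kl}$ conditionally on the matrix entries lying outside the $j$--th row and column, an operation denoted by $\E_j$; under $\E_j$ the resolvent $\mathbf R^{(j)}$ is fixed and $\varepsilon_{j2}$ is a centred, zero--diagonal quadratic form in the independent, mean--zero, unit--variance variables $\{X_{jl}\}_{l\in\mathbb T_j}$, each bounded by $Dn^{\alpha}$. Using the symmetry $R^{(j)}_{kl}=R^{(j)}_{lk}$ we write $\varepsilon_{j2}=\tfrac2n\sum_{k}X_{jk}Y_k$ with $Y_k:=\sum_{l<k}X_{jl}R^{(j)}_{kl}$, so that, after ordering the entries of the $j$--th row, $d_k:=\tfrac2nX_{jk}Y_k$ is a martingale--difference sequence. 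Burkholder's inequality for martingales then gives
\begin{align}\notag
\E_j|\varepsilon_{j2}|^{2p}\le (Cp)^{p}\,\E_j\Big(\tfrac1{n^2}\sum_k|Y_k|^2\Big)^{p}+(Cp)^{2p}\,\tfrac{1}{n^{2p}}\sum_k\E|X_{jk}|^{2p}\,\E_j|Y_k|^{2p},
\end{align}
i.e.\ a predictable--bracket term and an increment term, and the task is to estimate each.

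For the bracket term we take $\E_j$ inside and use the resolvent identity $\sum_{l\in\mathbb T_j}|R^{(j)}_{kl}|^2=v^{-1}\im R^{(j)}_{kk}$ to obtain $\E_j\big(\tfrac1{n^2}\sum_k|Y_k|^2\big)\le\tfrac1{2n^2v}\sum_k\im R^{(j)}_{kk}$; subtracting this conditional mean and controlling the remaining, genuinely smaller, quadratic form by the same device, then applying condition \eqref{h0} with $\mathbb J=\{j\}$ (which gives $\E(\im R^{(j)}_{kk})^p\le H_0^p$) together with Jensen's inequality, one sees that the bracket term carries a factor $(nv)^{-p}$ and is therefore of strictly smaller order than the asserted bound, since $1-2\alpha<\tfrac12$. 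For the increment term, $Y_k$ is, given the outer entries, a sum of independent variables, so Rosenthal's inequality gives
\begin{align}\notag
\E_j|Y_k|^{2p}\le (Cp)^{p}\Big(\sum_{l<k}|R^{(j)}_{kl}|^2\Big)^{p}+(Cp)^{2p}\,\E|X_{jk}|^{2p}\sum_{l<k}|R^{(j)}_{kl}|^{2p};
\end{align}
the first summand is at most $(Cp)^p\big(v^{-1}\im R^{(j)}_{kk}\big)^p$, again controlled by \eqref{h0}, while for the second we invoke the improved off--diagonal estimate: by Lemma \ref{e5}, passing from $v$ to $sv\ge v_1$ at the cost of bounded powers of $s$ exactly as in \eqref{13}, one has $\E|R^{(j)}_{kl}|^{2p}\le (sH_0)^{2p}$ for $k\ne l$ \emph{uniformly in $v$}, whence $\E\sum_k\sum_{l<k}|R^{(j)}_{kl}|^{2p}\le n^2(sH_0)^{2p}$.

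It remains to assemble these bounds and track the exponents of $n$ and of $v$. Since $\alpha=\tfrac2{4+\varkappa}$ we have $n^{\alpha(4+\varkappa)}=n^2$, so for $2p\ge 4+\varkappa$ one may use the sharp moment bound $\E|X_{jk}|^{2p}\le\mu_{4+\varkappa}(Dn^{\alpha})^{2p-4-\varkappa}$ (for $2p<4+\varkappa$ the moments are bounded by a constant); it is this term — the one obtained by combining the heavy--tail parts of the two nested Rosenthal inequalities, which already exhibits the shape $p^{4}s^{2}H_0^{2}$ — for which the sharp bound (rather than the crude $(Dn^{\alpha})^{2p-2}$) is essential, the gain being an extra factor $n^{-2}$ that keeps it under the target. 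Collecting the two Rosenthal constants into $p^{4}$, the scale--transfer factors into $s^{2}$, the off--diagonal estimate into $H_0^{2}$, and all remaining powers of $n$ and $v$ into $(nv)^{-2(1-2\alpha)}$ — here using $\tfrac13\le\alpha<\tfrac12$, $4\alpha>1$, and crucially $v\ge v_0=A_0n^{-1}$, i.e.\ $nv\ge A_0$, which is precisely what allows the unavoidable factor $v^{-p}$ produced by the increment term to be absorbed into an admissible $(nv)^{-2(1-2\alpha)p}$ — and taking $A_0$ sufficiently large, we arrive at $\E|\varepsilon_{j2}|^{2p}\le\big(Cp^{4}s^{2}H_0^{2}(nv)^{-2(1-2\alpha)}\big)^{p}$ for $v\ge v_0$, $v\ge v_1/s$ and $2p\le A_1(nv)^{(1-2\alpha)/2}$, the range in which \eqref{h0} and Lemma \ref{e5} are available. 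I expect the two delicate points to be the exact exponent bookkeeping that forces the collapse to $(nv)^{-2(1-2\alpha)}$, and the recursive control of the predictable bracket; the rest of the argument is a routine repetition of the estimates already carried out for $\varepsilon_{j3}$ in Lemma \ref{e3}.
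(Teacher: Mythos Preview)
Your approach is correct and leads to the same bound, but the paper takes a shorter route. Instead of writing $\varepsilon_{j2}$ as a martingale and applying Burkholder followed by a nested Rosenthal inequality, the paper invokes directly the moment inequality for zero--diagonal quadratic forms due to Gin\'e--Lata{\l}a--Zinn (stated as Lemma~\ref{Gine}), which yields in one stroke the three terms
\[
\E|\varepsilon_{j2}|^{2p}\le \frac{C^p}{n^{2p}}\Bigl(p^{2p}\E\Bigl(\sum_{l\ne k}|R^{(j)}_{lk}|^2\Bigr)^{\!p}
+p^{3p}\mu_{2p}\sum_{k}\E\Bigl(\sum_{l}|R^{(j)}_{kl}|^2\Bigr)^{\!p}
+p^{4p}\mu_{2p}^2\sum_{l\ne k}\E|R^{(j)}_{kl}|^{2p}\Bigr).
\]
From there the argument is essentially yours: the first two terms are handled via the resolvent identity $\sum_l|R^{(j)}_{kl}|^2=v^{-1}\im R^{(j)}_{kk}$ together with condition~\eqref{h0}, and the third term --- the one that dominates --- uses the off--diagonal estimate of Lemma~\ref{e5} together with the sharp moment bound $\mu_{2p}\le D^{2p}n^{2p\alpha-2}\mu_{4+\varkappa}$. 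The final collapse to $(nv)^{-2(1-2\alpha)}$ is then exactly as you describe, using $2(1-2\alpha)\le 1$ and $v\le 4$.

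What your decomposition buys is self--containment: you do not need to cite GLZ. What it costs is the ``recursive control of the predictable bracket'' that you yourself flag as delicate --- carrying that out carefully is essentially re--proving GLZ, so the saving is illusory. A minor correction: you write $\tfrac13\le\alpha$, but in fact $\alpha=\tfrac{2}{4+\varkappa}\in[\tfrac14,\tfrac12)$ for $0<\varkappa\le4$; the relevant inequality $4\alpha\ge1$ still holds (with equality at $\varkappa=4$), so nothing in the argument is affected.
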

\begin{proof}
Recall that
\begin{equation}
\varepsilon_{j2}=\frac1n\sum_{l\ne k\in\mathbb T_j}X_{jl}X_{jk}R^{(j)}_{kl}.
\end{equation}
Applying Lemma \ref{Gine}, we obtain
\begin{align}\label{new2}
\E|\varepsilon_{j2}|^{2p}&\le C^pn^{-2p}\Big(p^{2p}\E(\sum_{l\ne k\in \mathbb T_j}|R^{(j)}_{lk}|^2)^p+p^{3p}\mu_{2p}\sum_{k\in\mathbb T_j}\E(\sum_{l\in\mathbb T_{jk}}|R_{kl}^{(j)}|^2)^{p}\notag\\&+p^{4p}\mu_{2p}^2\sum_{l\ne k\in\mathbb T_j}\E|R_{kl}^{(j)}|^{2p}\Big).
\end{align}
At first we apply  \cite{GT:2015}[Lemma 7.6 inequalities (7.11), (7,12)] and condition \eqref{h0} of Lemma \ref{e5} obtaining
\begin{align}\label{improve}
\E|\varepsilon_{j2}|^{2p}&\le
C^pn^{-2p}(p^pn^pv^{-p}s^pH_0^p\notag \\
&\qquad+p^{3p}\mu_{2p}nv^{-p}(s_0H_0)^p+p^{4p}\mu_{2p}^2\sum_{l\ne k\in\mathbb T_j}\E|R_{kl}^{(j)}|^{2p}).
\end{align} 
Using the assumptions of Theorem \ref{trun} we have 
\begin{equation}\label{mom4}
\mu_{2p}\le D^{2p}n^{2p\alpha}n^{-2}\mu_{4+\varkappa}.
\end{equation}
Combining the last two inequalities and using that $2p(1-\alpha)\ge p$, we get
\begin{align}\label{new1}
\E|\varepsilon_{j2}|^{2p}&\le\left(\frac{Cp^2sH_0}{nv}\right)^p
+\left(\frac{Cp^3H_0s}{nv}\right)^p
+\left(\frac{Cp^4s^2H_0^2}{n^{2(1-2\alpha)}}\right)^p.
\end{align}
 
Using that  $1/v\ge \frac14$ and $2p(1-2\alpha)\le p$ for $z\in\mathbb G$, we may write
\begin{align}
\E|\varepsilon_{j2}|^{2p}\le
\left(\frac{Cp^4s^2H_0^2}{(nv)^{2(1-2\alpha)}}\right)^{p}.
\end{align}
Thus Lemma \ref{e2} is proved.
 \end{proof}
 \begin{lem}\label{rjj2}
 Assume that for some $1\ge v_1\ge v_0$ there exists a sufficiently large constant $H_0$ such that for any $\mathbb J$ with cardinality $|\mathbb J|\le \log_s(nv)$
and $q\le A_1(nv)^{\frac{1-2\alpha}2}$ the inequality
\begin{equation}\label{h01}
\E|R^{(\mathbb J)}_{jk}|^q\le H_0^q
\end{equation}
holds for any $j,k\in\mathbb T\setminus\mathbb J$ and any $v\ge v_1$.
Then  
\begin{equation}\label{h02}
\E|R^{(\mathbb J)}_{jj}|^q\le H_0^q
\end{equation} 
 holds for any $v\ge v_1/s$ and $j\in\mathbb T\setminus\mathbb J$.
\end{lem}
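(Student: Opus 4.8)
The plan is to use the standard decomposition of the diagonal resolvent entry through the Schur complement formula and then to bootstrap from the off-diagonal bound of Lemma \ref{e5} together with the $\varepsilon$-estimates of Lemmas \ref{e1}, \ref{e3}, and \ref{e2}. First I would recall the identity
\begin{equation}\label{schur}
R^{(\mathbb J)}_{jj}=\frac{1}{-z-\frac1nR^{(\mathbb J)}_{jj}\sum_{k\in\mathbb T_{\mathbb J,j}}R^{(\mathbb J)}_{kk}+\varepsilon_{j}},
\end{equation}
or rather its more convenient linearized form $R^{(\mathbb J)}_{jj}=s(z)+s(z)\,(\varepsilon_j-\Lambda_j)\,R^{(\mathbb J)}_{jj}$, where $s(z)$ solves $s^2+zs+1=0$, $\varepsilon_j=\varepsilon_{j1}+\varepsilon_{j2}+\varepsilon_{j3}$ with the $\varepsilon_{j\nu}$ as in Lemmas \ref{e1}--\ref{e2} (computed with respect to the index set $\mathbb J$), and $\Lambda_j$ collects the correction terms $\frac1n\sum_{k}(R^{(\mathbb J)}_{kk}-R^{(\mathbb J,j)}_{kk})$ plus the deviation of $\frac1n\sum_k R^{(\mathbb J,j)}_{kk}$ from $s(z)$. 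Since $|s(z)|\le 1$ on $\mathbb G$, iterating \eqref{schur} once gives, for any fixed $q\le A_1(nv)^{\frac{1-2\alpha}2}$,
\begin{equation}\label{iter}
|R^{(\mathbb J)}_{jj}|\le |s(z)|+|s(z)|\,|\varepsilon_j-\Lambda_j|\,|R^{(\mathbb J)}_{jj}|\le 1+|\varepsilon_j-\Lambda_j|\,|R^{(\mathbb J)}_{jj}|.
\end{equation}

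Next I would take $q$-th moments in \eqref{iter}, apply Hölder's inequality to split $\E|\varepsilon_j-\Lambda_j|^q|R^{(\mathbb J)}_{jj}|^q$ into $\E^{1/2}|\varepsilon_j-\Lambda_j|^{2q}\cdot\E^{1/2}|R^{(\mathbb J)}_{jj}|^{2q}$, and estimate the first factor. For $\varepsilon_{j1}$ use Lemma \ref{e1}, for $\varepsilon_{j3}$ use Lemma \ref{e3}, and for $\varepsilon_{j2}$ use Lemma \ref{e2}; each contributes a factor that is at most a fixed power of $q\,s\,H_0/(nv)^{1-2\alpha}$ (up to the $n^{-1/2}$ gains in the subleading terms), which is $\le 1/2$ once $A_0$ is chosen large and $A_1$ small, since $q\le A_1(nv)^{\frac{1-2\alpha}2}$ forces $q^2/(nv)^{1-2\alpha}\le A_1^2$. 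The terms in $\Lambda_j$ are handled the same way: the difference $R^{(\mathbb J)}_{kk}-R^{(\mathbb J,j)}_{kk}$ equals $R^{(\mathbb J)}_{kj}R^{(\mathbb J)}_{jk}/R^{(\mathbb J)}_{jj}$ by the resolvent rank-one identity, whose $q$-th moment is controlled by Lemma \ref{e5} (off-diagonal entries on an enlarged index set $\mathbb J\cup\{j\}$, whose cardinality still satisfies $|\mathbb J|+1\le\log_s(nv)$), producing again a gain of order $1/(nv)$; the deviation of $\frac1n\sum_k R^{(\mathbb J,j)}_{kk}$ from $s(z)$ on $\mathbb G$ is small by the input assumption combined with the rate already available at scale $v_1$, or simply absorbed into a single round of the recursion as in Lemma 5.13 of \cite{GT:2015}.

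The one point requiring care — and the main obstacle — is the self-referential appearance of $\E|R^{(\mathbb J)}_{jj}|^{2q}$ on the right-hand side: a naive bound only yields $\E|R^{(\mathbb J)}_{jj}|^q\le 2^q+\tfrac12\,(\text{const})^q$, which does not close. To fix this I would argue as in the proof of Theorem \ref{rjj}: run the estimate not for a single $q$ but simultaneously for all $q'\le q$, and use that the truncation $|X_{jk}|\le Dn^\alpha$ gives the deterministic bound $|R^{(\mathbb J)}_{jj}|\le 1/v\le n/A_0$, hence a crude a priori moment bound $\E|R^{(\mathbb J)}_{jj}|^{2q}\le (n/A_0)^{2q}$ that is finite; feeding this into the recursion and iterating it $O(\log n / \log q)$ times — each iteration replacing the exponent $(n/A_0)$ by $\sqrt{n/A_0}$ times a bounded constant — drives the bound down to $C_0^q$ with $C_0$ depending only on $\varkappa$ and $D$, exactly the range $q\le A_1(nv)^{\frac{1-2\alpha}2}$ making the geometric decay dominate the error terms. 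Once $\E|R^{(\mathbb J)}_{jj}|^q\le C_0^q$ is established with $C_0\le H_0$ (guaranteed by taking $H_0$ sufficiently large in the hypothesis \eqref{h01}), inequality \eqref{h02} follows for all $v\ge v_1/s$, completing the proof.
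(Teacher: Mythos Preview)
Your overall structure is right --- start from the linearized Schur identity $R^{(\mathbb J)}_{jj}=s(z)+s(z)(\varepsilon^{(\mathbb J)}_j+\Lambda^{(\mathbb J)}_n)R^{(\mathbb J)}_{jj}$, take $p$-th moments, split by Cauchy--Schwarz, and feed in Lemmas \ref{e1}, \ref{e3}, \ref{e2}. Two points need correction. First, the paper does not decompose $\Lambda^{(\mathbb J)}_n$ via the rank-one identity and Lemma \ref{e5}; instead it uses the algebraic bound $|\Lambda^{(\mathbb J)}_n|\le C\sqrt{|T^{(\mathbb J)}_n|}$ valid on $\mathbb G$ (\cite[Lemma~5.9]{GT:2015}), and then controls $\E^{1/2}|T^{(\mathbb J)}_n|^p$ by averaging $\E^{1/2}|\varepsilon^{(\mathbb J)}_l|^{2p}\E^{1/2}|R^{(\mathbb J)}_{ll}|^{2p}$, which again reduces to the $\varepsilon$-lemmas plus the diagonal hypothesis.

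Second, and more important, your proposed fix for the self-referential factor $\E^{1/2}|R^{(\mathbb J)}_{jj}|^{2p}$ does not close. Iterating from the crude bound $|R_{jj}|\le 1/v$ would require the $\varepsilon$-estimates (Lemmas \ref{e2}, \ref{e3}) at exponents $2p,4p,\ldots,2^Kp$, but those lemmas already depend on the hypothesis \eqref{h01}, which is only assumed for $q\le A_1(nv)^{(1-2\alpha)/2}$; moreover the smallness constant in Lemma \ref{e2} behaves like $p^4/(nv)^{2(1-2\alpha)}$ and blows up once $p$ exceeds that threshold. The paper's mechanism is different and uses \emph{one} doubling only: the specific choice $s=2^{2/(1-2\alpha)}$ is made precisely so that $p\le A_1(nv/s)^{(1-2\alpha)/2}$ implies $2p\le A_1(nv)^{(1-2\alpha)/2}$. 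Thus for $v\ge v_1/s$ one has $sv\ge v_1$, the hypothesis \eqref{h01} applies at scale $sv$ with exponent $2p$, and a scale comparison of the resolvent between $v$ and $sv$ yields $\E^{1/2}|R^{(\mathbb J)}_{jj}|^{2p}\le (sH_0)^p$ directly. Plugging this into the moment inequality and choosing $A_0,A_1$ appropriately gives $\E|R^{(\mathbb J)}_{jj}|^p\le H_0^p$ in a single step --- no inner iteration is needed or available.
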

\begin{proof}First we use representation (3.7) in \cite{GT:2015}. We have
\begin{equation}\label{rjj1}
\R_{jj}^{(\mathbb J)}=s(z)+s(z)(\varepsilon^{(\mathbb J)}+\Lambda_n^{(\mathbb J)})R^{(\mathbb J)}_{jj},
\end{equation}
where $\varepsilon^{(\mathbb J)}_j=\varepsilon^{(\mathbb J)}_{j1}+\cdots+\varepsilon^{(\mathbb J)}_{j4}$ and
\begin{align}\label{defeps}
\varepsilon^{(\mathbb J)}_{j1}&=\frac1{\sqrt n}X_{jj},\quad \varepsilon^{(\mathbb J)}_{j2}=-\frac1n\sum_{l\ne k\in\mathbb T_{\mathbb J}}X_{jl}X_{jk}R^{\mathbb J,j)}_{lk},\notag\\
\varepsilon^{(\mathbb J)}_{j3}&=-\frac1n\sum_{l\in\mathbb T_{\mathbb J}}(X_{jl}^2-1)R^{(\mathbb J,j)}_{ll},\quad 
\varepsilon^{(\mathbb J)}_{j4}=\frac1n(\Tr \mathbf R^{(\mathbb J)}-\Tr\mathbf R^{\mathbb J,j)}).
\end{align}
Equality \eqref{rjj1} yields 
\begin{equation}\label{newrjj}
|R_{jj}^{(\mathbb J)}|\le1+  C(|\varepsilon_{j}^{(\mathbb J)}|+|\Lambda_n^{(\mathbb J)}(z)|)|R^{(\mathbb J)}_{jj}|.
\end{equation}

Since $|\Lambda_n|\le C\sqrt{|T_n|}$ for $z\in\mathbb G$ (see \cite[Lemma 5.9]{GT:2015}), we get
\begin{equation}
|R_{jj}^{(\mathbb J)}|\le 1+C|R_{jj}^{(\mathbb J)}|(|\varepsilon_j^{(\mathbb J)}|+\sqrt{|T_n|}).
\end{equation}
Applying Cauchy's inequality, we get
\begin{equation}
\E|R_{jj}^{(\mathbb J)}|^p\le 3^p(1+(\E^{\frac12}|\varepsilon_j^{(\mathbb J)}|^{2p}+\E^{\frac12}|T_n^{(\mathbb J)}|^{p})\E^{\frac12}|R_{jj}^{(\mathbb J)}|^{2p}).
\end{equation}
Note that if $p\le A_1(nv/s)^{\frac{1-2\alpha}2}$, then $2p\le A_1(nv)^{\frac{1-2\alpha}2}$.
Applying our assumption we get
\begin{equation}\label{2.10}
\E|R_{jj}^{(\mathbb J)}|^p\le 3^p+(sH_0)^p(\E^{\frac12}|\varepsilon_j^{(\mathbb J)}|^{2p}+\E^{\frac12}|T_n^{(\mathbb J)}|^{p})
\end{equation}Consider inequality
\begin{equation}\label{2.10a}
\E|R_{jj}^{(\mathbb J)}|^p\le 3^p+3^p(sH_0)^p(\E^{\frac12}|\varepsilon_j^{(\mathbb J)}|^{2p}+\E^{\frac12}|T_n^{(\mathbb J)}|^{p})
\end{equation}
By definition of $T_n$ and Cauchy's inequality, we have
\begin{equation}\label{2.10b}
\E^{\frac12}|T_n^{(\mathbb J)}|^{p}\le \Big(\frac1n\sum_{l\in\mathbb T_{\mathbb J}}\E^{\frac12}|\varepsilon_l^{(\mathbb J)}|^{2p}\E^{\frac12}|R^{(\mathbb J)}_{ll}|^{2p}\Big)^{\frac12}.
\end{equation}
Applying the inequality $|\varepsilon_l^{(\mathbb J)}|\le |\varepsilon_{l1}^{(\mathbb J)}|+|\varepsilon_{l2}^{(\mathbb J)}|+|\varepsilon_{l3}^{(\mathbb J)}|+|\varepsilon_{l4}^{(\mathbb J)}|$ and Lemmas \ref{e1}, \ref{e2}, \ref{e3} and \cite[Lemma 7.12]{GT:2015}, we get
\begin{align}\label{2.10c}
\E^{\frac12}|T_n^{(\mathbb J)}|^{p}\le C^p(sH_0)^{\frac p2} \Bigg(
\left(\frac{p^4s^2H_0^2}{(nv)^{2(1-2\alpha)}}\right)^{\frac p2}
&+\left(\frac{C}{n^{(1-2\alpha)}}\right)^{\frac p2}\Bigg).
\end{align}
We use here that $1\le 4v^{-1}$ and $p\ge 2p(1-2\alpha)$.
Similarly we get
\begin{align}\label{2.10d}
\E^{\frac12}|\varepsilon_j^{(\mathbb J)}|^{2p}\le
\left(\frac{p^4s^2H_0^2}{(nv)^{2(1-2\alpha)}}\right)^{p}
&+\left(\frac{C}{n^{(1-2\alpha)}}\right)^{p}.
\end{align}
Combining inequalities \eqref{2.10a} -- \eqref{2.10d}, we arrive at
\begin{align}
\E|R_{jj}^{(\mathbb J)}|^p\le 3^p&+\Bigg(\left(\frac{9C^2p^4(sH_0)^5}{(nv)^{2(1-2\alpha)}}\right)^{\frac p2}+
\left(\frac{9C^2s^3H_0^3}{n^{(1-2\alpha)}}\right)^{\frac p2}+\left(\frac{3Cp^4(sH_0)^3}{(nv)^{2(1-2\alpha)}}\right)^{p}\notag\\&+
\left(\frac{3CsH_0}{n^{1-2\alpha}}\right)^{p}\Bigg).
\end{align}
This inequality ensures that we may choose the constants $A_1$ and $A_0$ such that, for $v\ge v_1/s$
\begin{equation}
\E|R^{(\mathbb J)}_{jj}|^p\le H_0^p.
\end{equation}
 Thus Lemma \ref{rjj2} is proved.
 \end{proof}
To prove Theorem \ref{rjj} it is enough to repeat the proof of Lemma 5.13 and Corollary 5.14  in \cite{GT:2015}.

\section{{Proof} of Theorem \ref{stieltjesmain}}\label{expect}
We return now to the representation \eqref{rjj1} which implies that
\begin{align}\label{lambda}
s_n(z)&=\frac1n\sum_{j=1}^n\E R_{jj}=s(z)+\E\Lambda_n=s(z)+\E\frac{T_n(z)}{z+s(z)+m_n(z)}.
\end{align}
The last equality may be further reformulated as
\begin{align}\label{eq00}
s_n(z)=s(z)+\E\frac{\frac1n\sum_{j=1}^n\varepsilon_{j4}R_{jj}}{z+s(z)+m_n(z)}+
\E\frac{\widehat T_n(z)}{z+s(z)+m_n(z)},
\end{align}
where 
$$
\widehat T_n=\sum_{\nu=1}^3\frac1n\sum_{j=1}^n\varepsilon_{j\nu}R_{jj}.
$$
Note that the definition of $\varepsilon_{j4}$ in \eqref{rjj1} ($\mathbb J=\emptyset$) and equality 
\begin{equation}\label{shur}
 \Tr \mathbf R-\Tr\mathbf R^{(j)}=(1+\frac1n\sum_{l,k\in\mathbb T_j}X_{jl}X_{jk}[(R^{(j)})^2]_{kl})R_{jj}=R^{-1}_{jj}\frac {dR_{jj}}{dz},
\end{equation}
(see as well \cite[equality (7.34)]{GT:2015}) together imply 
\begin{equation}\label{7.3}
\frac1n\sum_{j=1}^n\varepsilon_{j4}R_{jj}=\frac1n\Tr \mathbf R^2=\frac1n\frac{d m_n(z)}{dz}.
\end{equation}
Thus we may rewrite \eqref{eq00} as
\begin{align}\label{eq01}
s_n(z)&=s(z)+\frac1n\E\frac{ m_n'(z)}{z+s(z)+m_n(z)}+
\E\frac{\widehat T_n(z)}{z+s(z)+m_n(z)}.
\end{align}

Denote
\begin{equation}\mathfrak T=\E\frac{\widehat T_n(z)}{z+s(z)+m_n(z)}.
\end{equation}

\subsection{Estimation of  $\mathfrak T$} We represent $\mathfrak T$ as follows
\begin{align}\notag
\mathfrak T=\mathfrak T_{1}+\mathfrak T_{2},
\end{align}
where
\begin{align}
\mathfrak T_{1}&=-\frac1n\sum_{j=1}^n\sum_{\nu=1}^3\E\frac{\varepsilon_{j\nu}
\frac1{z+m_n^{(j)}(z)}}{z+m_n(z)+s(z)},\notag\\
\mathfrak T_{2}&=\frac1n\sum_{j=1}^n\sum_{\nu=1}^3
\E\frac{\varepsilon_{j\nu}(R_{jj}+\frac1{z+m_n^{(j)}(z)})}{z+m_n(z)+s(z)}.\notag
\end{align}
\subsubsection{Estimation  of $\mathfrak T_{1}$}
We may decompose  $\mathfrak T_{1}$ as
\begin{align}\label{tii}
\mathfrak T_{1}=\mathfrak T_{11}+\mathfrak T_{12},
\end{align} 
where
\begin{align}
\mathfrak T_{11}&=-\frac1n\sum_{j=1}^n\sum_{\nu=1}^3\E\frac{\varepsilon_{j\nu}
\frac1{z+m_n^{(j)}(z)}}{z+m_n^{(j)}(z)+s(z)},\notag\\
\mathfrak T_{12}&=-\frac1n\sum_{j=1}^n\sum_{\nu=1}^3\E\frac{\varepsilon_{j\nu}\varepsilon_{j4}
\frac1{z+m_n^{(j)}(z)}}{(z+m_n^{(j)}(z)+s(z))(z+m_n(z)+s(z))}.\notag
\end{align}

It is easy to see that, by conditional expectation
\begin{equation}\label{fin104}
\mathfrak T_{11}=0.
\end{equation}
Applying the Cauchy--Schwartz inequality, for $\nu=1,2,3$,  we get
\begin{align}\label{fin100}
\Bigg|\E&\frac{\varepsilon_{j\nu}\varepsilon_{j4}
\frac1{z+m_n^{(j)}(z)}}{(z+m_n^{(j)}(z)+s(z))(z+m_n(z)+s(z))}\Bigg|\notag\\
&\le 
\E^{\frac12}\Bigg|\frac{\varepsilon_{j\nu}}{(z+m_n^{(j)}(z))(z+m_n^{(j)}(z)+s(z))}\Bigg|^2\E^{\frac12}\Bigg|\frac{\varepsilon_{j4}}{z+m_n(z)+s(z)}\Bigg|^2.
\end{align}
Conditioning and using \cite[Lemmas 7.15, 7.16]{GT:2015} together with the bound $\im m_n^{(j)}(z)\le|z+m_n^{(j)}(z)+s(z)|$, we get, for $\nu=2,3$,
\begin{align}
 \E^{\frac12}\Bigg|&\frac{\varepsilon_{j\nu}}{(z+m_n^{(j)}(z))(z+m_n^{(j)}(z)+s(z))}\Bigg|^2\le 
 \frac C{\sqrt{nv}}\E^{\frac12}\frac1{|z+m_n^{(j)}(z)|^2|z+m_n^{(j)}(z)+s(z)|}.\notag
\end{align}
Furthermore, applying \cite[Lemma 7.5]{GT:2015}, we get
\begin{align}\label{fin200}
 \E^{\frac12}\Bigg|&\frac{\varepsilon_{j\nu}}{(z+m_n^{(j)}(z))(z+m_n^{(j)}(z)+s(z))}\Bigg|^2\le 
 \frac C{\sqrt{nv}|z^2-4|^{\frac14}}\E^{\frac12}\frac1{|z+m_n^{(j)}(z)|^2}.
\end{align}
Inequalities \eqref{fin100}, \eqref{fin200},   Theorem \ref{rjj} together imply, for $\nu=2,3$,
\begin{align}\label{f19}
 \Bigg|\E&\frac{\varepsilon_{j\nu}\varepsilon_{j4}
\frac1{z+m_n^{(j)}(z)}}{(z+m_n^{(j)}(z)+s(z))(z+m_n(z)+s(z))}\Bigg|
&\le \frac C{\sqrt{nv}|z^2-4|^{\frac14}}\E^{\frac12}\frac{|\varepsilon_{j4}|^2}{|z+m_n^{(j)}(z)+s(z)|^2}.
\end{align}
By \cite[Lemma 7.5]{GT:2015} we have for $\nu=1$ 
\begin{align}\label{f20}
 \E^{\frac12}\frac{|\varepsilon_{j\nu}|^2}{|z+m_n^{(j)}(z)+s(z)|^2|z+m_n^{(j)}(z)|^2}&\le \frac {C}{\sqrt n\sqrt{|z^2-4|}} \E^{\frac12}\frac1{|z+m_n^{(j)}(z)|^2}\notag\\&\le  \frac {C}{\sqrt {nv}{|z^2-4|^{\frac14}}} \E^{\frac12}\frac1{|z+m_n^{(j)}(z)|^2}.
\end{align}
Applying Theorem \ref{rjj}, we get
\begin{align}\label{f20a}
 \E^{\frac12}\frac{|\varepsilon_{j\nu}|^2}{|z+m_n^{(j)}(z)+s(z)|^2|z+m_n^{(j)}(z)|^2}\le  \frac {C}{\sqrt {nv}{|z^2-4|^{\frac14}}}.
\end{align}
Furthermore, according to  Lemma \ref{modify}, we have, for $z\in\mathbb G$,
\begin{equation}
\E^{\frac12}\Bigg|\frac{\varepsilon_{j4}}{z+m_n(z)+s(z)}\Bigg|^2\le \frac C{nv}.
\end{equation}
Finally we get
\begin{align}
\Big|\mathcal T_{12}\Big|\le 
\frac C{(nv)^{\frac32}|z^2-4|^{\frac14}}.
\end{align}
\subsubsection{Estimation of $\mathfrak T_{2}$}
Using the representation \eqref{rjj1}, we write
\begin{align}\notag
\mathfrak T_{2}=\frac1n\sum_{j=1}^n\E\frac{\widetilde{\varepsilon}_j^2R_{jj}}{(z+m^{(j)}(z))(z+s(z)+m_n(z))}.
\end{align}
Furthermore we note that
\begin{equation}\notag
\widetilde{\varepsilon}_j^2=(\varepsilon_{j1}+\varepsilon_{j2}+\varepsilon_{j3})^2=\varepsilon_{j2}^2+\eta_j,
\end{equation}
where
\begin{equation}\notag
\eta_j=(\varepsilon_{j1}+\varepsilon_{j3})^2+2(\varepsilon_{j1}+\varepsilon_{j3})
\varepsilon_{j2}.
\end{equation}

We  now decompose $\mathfrak T_{2}$ as follows
\begin{equation}\label{t2ii}
\mathfrak T_{2}=\mathfrak T_{21}+\mathfrak T_{22}+\mathfrak T_{23},
\end{equation}
where
\begin{align}
\mathfrak T_{21}&=\frac1n\sum_{j=1}^n\E\frac{{\varepsilon}_{j2}^2R_{jj}}{(z+m^{(j)}(z))(z+s(z)+m_n(z))},\notag\\
\mathfrak T_{22}&=\frac1n\sum_{j=1}^n\E\frac{\eta_jR_{jj}}{(z+m^{(j)}(z))(z+s(z)+m_n^{(j)}(z))},\notag\\
\mathfrak T_{23}&=\frac1n\sum_{j=1}^n\E\frac{\eta_jR_{jj}\varepsilon_{j4}}{(z+m^{(j)}(z))(z+s(z)+m_n^{(j)}(z))(z+s(z)+m_n(z))}.\notag
\end{align}
First we note that
\begin{equation}
|\eta_j|\le 2|\varepsilon_{j1}|^2+2|\varepsilon_{j3}|^2+2|\varepsilon_{j2}|(|\varepsilon_{j1}|+|\varepsilon_{j2}|).
\end{equation}
Applying the Cauchy -- Schwartz inequality, we obtain
\begin{align}\label{lang2}
 \frac1n\sum_{j=1}^n\E\frac{(2|\varepsilon_{j1}|^2+2|\varepsilon_{j3}|^2)|R_{jj}|}{|z+m^{(j)}(z)||z+s(z)+m_n^{(j)}(z)|}&\le 
\frac Cn\sum_{j=1}^n\E^{\frac4{4+\varkappa}}\frac{|\varepsilon_{j1}|^{\frac{4+\varkappa}2}+|\varepsilon_{j3}|^{\frac{4+\varkappa}2}}{|z+s(z)+m_n^{(j)}(z)|^{\frac{4+\varkappa}{4}}}\notag\\&\times
\E^{\frac{\varkappa}{(4+\varkappa}}|R_{jj}|^{\frac{4+\varkappa)}{\varkappa}}|z+m^{(j)}(z)|^{-\frac{(4+\varkappa)}{\varkappa}} .
\end{align}
Lemma \ref{eps3} and \cite[Lemma 7.5 inequality (7.9)]{GT:2015} together imply
\begin{align}\label{in1}
\frac1n\sum_{j=1}^n\E\frac{(2|\varepsilon_{j1}|^2+2|\varepsilon_{j3}|^2)|R_{jj}|}{|z+m^{(j)}(z)||z+s(z)+m_n^{(j)}(z)|}&\le 
\frac C{n|z^2-4|^{\frac12}}.
\end{align}
Furthermore, applying H\"older's inequality, we get
\begin{align}
\frac1n\sum_{j=1}^n&\E\frac{2(|\varepsilon_{j1}|+|\varepsilon_{j3}|)|\varepsilon_{j2}||R_{jj}|}{|z+m^{(j)}(z)||z+s(z)+m_n^{(j)}(z)|}
\le \frac Cn\sum_{j=1}^n\E^{\frac2{4+\varkappa}}
\frac{|\varepsilon_{j1}|^{\frac{4+\varkappa}2}
+|\varepsilon_{j3}|^{\frac{4+\varkappa}2}}
{|z+s(z)+m_n^{(j)}(z)|^{\frac{4+\varkappa}{4}}}
\notag\\&\times \E^{\frac2{4+\varkappa}}\frac{|\varepsilon_{j2}|^{\frac{4+\varkappa}2}}{|z+s(z)+m_n^{(j)}(z)|^{\frac{4+\varkappa}{4}}}
\E^{\frac{\varkappa}{(4+\varkappa}}|R_{jj}|^{\frac{4+\varkappa)}{\varkappa}}|z+m^{(j)}(z)|^{-\frac{(4+\varkappa)}{\varkappa}}.
\end{align}
Applying Lemmas \ref{eps2}, \ref{eps3} and \cite[Lemma 7.5 inequality (7.9)]{GT:2015}, we obtain
\begin{equation}\label{in2}
\frac1n\sum_{j=1}^n\E\frac{(2(|\varepsilon_{j1}|+|\varepsilon_{j3}|)|\varepsilon_{j2}|)|R_{jj}|}{|z+m^{(j)}(z)||z+s(z)+m_n^{(j)}(z)|}
\le \frac C{n\sqrt v|z^2-4|^{\frac14}}.
\end{equation}
Combining inequalities \eqref{in1} and \eqref{in2}, we get
\begin{equation}\label{finisch2}
\mathcal T_{22}\le \frac C{nv^{\frac34}}.
\end{equation}
Applying \cite[inequality (7.42)]{GT:2015}, we obtain
\begin{align}
|\mathfrak T_{23}|&\le \frac Cn\sum_{j=1}^n\E\frac{|\eta_j||R_{jj}|}{|z+m^{(j)}(z)||z+s(z)+m_n^{(j)}(z)|}
\end{align}
Repeating the arguments of inequality \eqref{finisch2}, we similarly get
\begin{equation}\label{finisch3}
 |\mathfrak T_{23}|\le \frac C{nv^{\frac34}}.
\end{equation}

We continue now with $\mathfrak T_{21}$. We represent it in the form
\begin{equation}\label{t21ii}
\mathfrak T_{21}=H_1+H_2,
\end{equation}
where
\begin{align}
H_1&=-\frac1n\sum_{j=1}^n\E\frac{{\varepsilon}_{j2}^2}{(z+m^{(j)}(z))^2(z+s(z)+m_n(z))},\notag\\
H_2&=\frac1n\sum_{j=1}^n\E\frac{{\varepsilon}_{j2}^2(R_{jj}+\frac1{z+m_n^{(j)}})}{(z+m^{(j)}(z))(z+s(z)+m_n(z))}.\notag
\end{align}
Furthermore, using the representation
\begin{equation}
 R_{jj}=-\frac1{z+m_n^{(j)}(z)}+\frac1{z+m_n^{(j)}(z)}(\varepsilon_{j1}+\varepsilon_{j2}+\varepsilon_{j3})R_{jj},
\end{equation}
we bound $H_2$ in the following way
\begin{align}\notag
|H_2|\le H_{21}+H_{22}+H_{23},
\end{align}
where
\begin{align}
H_{21}&=\frac1n\sum_{j=1}^n\E\frac{4|{\varepsilon}_{j1}|^3|R_{jj}|}{|z+m^{(j)}(z)|^2|z+s(z)+m_n(z)|},\notag\\
H_{22}&=\frac1n\sum_{j=1}^n\E\frac{2|{\varepsilon}_{j2}|^3|R_{jj}|}{|z+m^{(j)}(z)|^2|z+s(z)+m_n(z)|},\notag\\
H_{23}&=\frac1n\sum_{j=1}^n\E\frac{2|{\varepsilon}_{j3}||\varepsilon_{j2}|^2|R_{jj}|}{|z+m^{(j)}(z)|^2|z+s(z)+m_n(z)|}.\notag
\end{align}
Using \cite[inequality (7.42)]{GT:2015}, we get, for $\nu=1,2$
\begin{align}\label{inr3}
H_{2\nu}&\le \frac1n\sum_{j=1}^n\E\frac{4|{\varepsilon}_{j\nu}|^3|R_{jj}|}{|z+m^{(j)}(z)|^2|z+s(z)+m_n^{(j)}(z)|}\notag\\&
+\frac1n\sum_{j=1}^n\E\frac{4|{\varepsilon}_{j\nu}|^3|R_{jj}||\varepsilon_{j4}|}{|z+m^{(j)}(z)|^2|z+s(z)+m_n^{(j)}(z)||z+s(z)+m_n(z)|}\notag\\&
\le \frac Cn\sum_{j=1}^n\E^{\frac34}\frac{|{\varepsilon}_{j\nu}|^4}{|z+m_n^{(j)}(z)|^{\frac83}|z+s(z)+m_n^{(j)}(z)|^{\frac43}}
\E^{\frac14}|R_{jj}|^4.
\end{align}
Applying \cite[Corollary 7.17 inequality (7.32)]{GT:2015}  with $\beta=\frac43$ and $\alpha=\frac83$, we obtain,  for $z\in\mathbb G$, and for $\nu=1,2$
\begin{align}\label{h2nu}
\E^{\frac34}\frac{|{\varepsilon}_{j\nu}|^4}{|z+m^{(j)}(z)|^{\frac83}|z+s(z)+m_n^{(j)}(z)|^{\frac43}}\le
\frac C{(nv)^{\frac32}}.\notag
\end{align}
Furthermore, using H\"older's inequality, we get
\begin{align}
H_{23}\le \frac1n\sum_{j=1}^n\E^{\frac2{4+\varkappa}}\frac{|\varepsilon_{j2}|^{4+\varkappa}}{|z+s(z)+m_n^{(j)}(z)|^{\frac{4+\varkappa}4}}
\E^{\frac2{4+\varkappa}}|\varepsilon_{j3}|^{\frac{4+\varkappa}2}\E^{\frac{\varkappa}{4+\varkappa}}\left(\frac{|R_{jj}|}{|z+m^{(j)}(z)|}\right)^{\frac{4+\varkappa}{\varkappa}}.
\end{align} 
Applying Lemmas \ref{eps2}, \ref{eps3}, we get
\begin{equation}\label{h23}
H_{23}\le \frac C{n^{\frac32}v}\le
\frac C{(nv)^{\frac32}}.
\end{equation}
Combining inequalities \eqref{h2nu} and \eqref{h23}, we arrive at
\begin{equation}
H_{2}\le 
\frac C{(nv)^{\frac32}}.
\end{equation}
Consider now $H_1$. Using the equality
\begin{equation}\notag
 \frac1{z+m_n(z)+s(z)}=\frac1{z+2s(z)}-\frac{\Lambda_n(z)}{(z+2s(z))(z+m_n(z)+s(z))}
\end{equation}
and
\begin{equation}\label{lepsj4}
 \Lambda_n=\Lambda_n^{(j)}+\varepsilon_{j4},
\end{equation}
we represent it in the form
\begin{align}\label{h1}
H_1=H_{11}+H_{12}+H_{13},
\end{align}
where
\begin{align}
H_{11}&=-\frac1{(z+s(z))^2}\frac1n\sum_{j=1}^n\E\frac{\varepsilon_{j2}^2}{z+s(z)+m_n(z)}\notag\\&
=-s^2(z)\frac1n\sum_{j=1}^n\E\frac{\varepsilon_{j2}^2}{z+s(z)+m_n(z)},\notag\\
H_{12}&=-\frac1{(z+s(z))}\frac1n\sum_{j=1}^n\E\frac{\varepsilon_{j2}^2\Lambda_n^{(j)}
}{(z+m_n^{(j)}(z))^2(z+s(z)+m_n(z))},\notag\\
H_{13}&=-\frac1{(z+s(z))^2}\frac1n\sum_{j=1}^n\E\frac{\varepsilon_{j2}^2
\Lambda_n^{(j)}
}{(z+m_n^{(j)}(z))(z+s(z)+m_n(z))}.\notag
\end{align}
In order to apply conditional independence, we write 
\begin{align}\notag
H_{11}=H_{111}+H_{112},
\end{align}
where
\begin{align}
H_{111}&=-s^2(z)\frac1n\sum_{j=1}^n\E\frac{\varepsilon_{j2}^2}{z+m_n^{(j)}(z)+s(z)},\notag\\
H_{112}&=s^2(z)\frac1n\sum_{j=1}^n\E\frac{\varepsilon_{j2}^2\varepsilon_{j4}}{(z+s(z)+m_n(z))(z+m_n^{(j)}(z)+s(z))}.\notag
\end{align}
It is straightforward to check that
\begin{align}\notag
\E\{\varepsilon_{j2}^2|\mathfrak M^{(j)}\}=\frac1{n^2}\Tr (\mathbf R^{(j)})^2-\frac1{n^2}\sum_{l\in\mathbb T_j}(R^{(j)}_{ll})^2.
\end{align}
Using equality \eqref{7.3} for $m_n'(z)$ and the corresponding relation for ${m_n^{(j)}}'(z)$, we may write
\begin{equation}\notag
H_{111}=L_1+L_2+L_3+L_4,
\end{equation}
where
\begin{align}
L_1&=-s^2(z)\frac1n\E\frac{m_n'(z)}{z+m_n(z)+s(z)},\notag\\
L_2&=s^2(z)\frac1n\sum_{j=1}^n\E\frac{\frac1{n^2}\sum_{l\in\mathbb T_j}
(R^{(j)}_{ll})^2}{z+m_n^{(j)}(z)+s(z)},\\
L_3&=s^2(z)\frac1n\sum_{j=1}^n\E\frac{\frac1{n}((m_n^{(j)}(z))'-m_n'(z))}{z+m_n^{(j)}(z)+s(z)},\\
L_4&=s^2(z)\frac1n\sum_{j=1}^n\E\frac{\frac1{n}((m_n^{(j)}(z))'-m_n'(z))\varepsilon_{j4}}
{(z+m_n(z)+s(z))(z+m_n^{(j)}(z)+s(z))}.\notag
\end{align}
 Using \cite[Lemma 7.6 inequality (7.11)]{GT:2015} and Theorem \ref{rjj}, we get
 \begin{align}\label{l2}
|L_2|&\le \frac C{n\sqrt{|z^2-4|}}.
\end{align}
Note that
\begin{equation}
m_n^{(j)}(z))'-m_n'(z)=\int_{-\infty}^{\infty}\frac1{(x-z)^2}d(\mathcal F_n^{(j)}(x)-\mathcal F_n(x)).
\end{equation}
Integrating by parts we obtain
\begin{equation}
|m_n^{(j)}(z))'-m_n'(z)|\le \frac C{nv^2}.
\end{equation}
The last inequality and \cite[Lemma 7.5 inequality (7.9)]{GT:2015} together imply
\begin{align}\label{l3}
|L_3|\le \frac C{n^2v^2\sqrt{|z^2-4|}}.
\end{align}
Finally, using that $|\varepsilon_{j4}|/|z+s(z)+m_n(z)|\le C$ for $z\in\mathbb G$, we arrive at
\begin{align}\label{l4}
|L_4|&\le \frac C{n^2v^2\sqrt{|z^2-4|}}.
\end{align}
Conditioning on $\mathfrak M^{(j)}$ and applying Lemma \ref{eps2}, we get
\begin{equation}\label{h112}
|H_{112}|\le \frac C{n^2v^2|z^2-4|^{\frac12}}.
\end{equation}
Applying \cite[ inequality (7.42)]{GT:2015} , we may write
\begin{align}\notag
 |H_{12}|\le \frac Cn\sum_{j=1}^n \E\frac{|\varepsilon_{j2}|^2|\Lambda_n^{(j)}|}
 {|z+m_n^{(j)}(z)||z+m_n^{(j)}(z)+s(z)|}.
\end{align}
Conditioning on $\mathfrak M^{(j)}$ and applying Lemma \ref{eps2}, we get
\begin{align}\notag
 |H_{12}|\le \frac C{{nv}}\frac 1n\sum_{j=1}^n \E\frac{|\Lambda_n^{(j)}|}{|z+m_n^{(j)}(z)|}\le 
 \frac C{nv}\E^{\frac12}|\Lambda_n|^2+\frac C{n^2v^2}.
\end{align}
By Lemma \ref{lambdaopt}, we get
\begin{equation}\label{h12}
 |H_{12}|\le\frac C{n^2v^2}.
\end{equation}
Similarly we obtain
\begin{equation}\label{h13}
 |H_{13}|\le \frac C{n^2v^2}.
\end{equation}

Now we rewrite the equations \eqref{eq00}  and \eqref{eq01} as follows, using the remainder term 
$\mathfrak T_3$,
which can be  bounded by means of inequalities \eqref{h12}, \eqref{finisch2}, \eqref{l2},\eqref{l3}, \eqref{l4}, \eqref{h112} and \eqref{h13}
\begin{align}\label{final00}
\E\Lambda_n(z)=\E m_n(z)-s(z)=\frac{(1-s^2(z))}{n}\E\frac{m_n'(z)}{z+m_n(z)+s(z)}+\mathfrak T_3,
\end{align}
where
\begin{align}\notag
|\mathfrak T_3|\le \frac C{nv^{\frac34}}+\frac C{n^{\frac32}v^{\frac32}|z^2-4|^{\frac14}}.
\end{align}
Note that
\begin{equation}\notag
1-s^2(z)=s(z)\sqrt{z^2-4}.
\end{equation}
In \eqref{final00} it remains to estimate the quantity
\begin{equation}\notag
\mathfrak T_4=-\frac{s(z)\sqrt{z^2-4}}{n}\E\frac{m_n'(z)}{z+m_n(z)+s(z)}.
\end{equation}

\subsection{Estimation of $\mathfrak T_4$}\label{better}
Using that $\Lambda_n=m_n(z)-s(z)$ we rewrite $\mathfrak T_4$ as
\begin{equation}\notag
\mathfrak T_4=\mathfrak T_{41}+\mathfrak T_{42}+\mathfrak T_{43},\notag
\end{equation}
where
\begin{align}
\mathfrak T_{41}&=-\frac{s(z)s'(z)}{n},\notag\\
\mathfrak T_{42}&=\frac{s(z)\sqrt{z^2-4}}n\E\frac{m_n'(z)-s'(z)}{z+m_n(z)+s(z)},\notag\\
\mathfrak T_{43}&=\frac{s(z)}n\E\frac{(m_n'(z)-s'(z))\Lambda_n}{z+m_n(z)+s(z)}.\notag
\end{align}

\subsubsection{Estimation of  $\mathfrak T_{42}$}
First we investigate $m_n'(z)$. The following equality holds
\begin{equation}\label{mn'}
m_n'(z)=\frac1n\Tr R^2=\sum_{j=1}^n \varepsilon_{j4}R_{jj}=
s^2(z)\sum_{j=1}^n \varepsilon_{j4}R_{jj}^{-1}+D_1,
\end{equation}
where
\begin{equation}\label{d1}
D_1=\sum_{j=1}^n \varepsilon_{j4}(R_{jj}-s(z))(1+R_{jj}^{-1}s(z)).
\end{equation}
Using equality \eqref{shur}, we may write
\begin{equation}\notag
m_n'(z)=\frac{s^2(z)}n\sum_{j=1}^n (1+\frac1n\sum_{l,k\in\mathbb T_j}X_{jl}X_{jk}[(R^{(j)})^2]_{lk})+D_1.
\end{equation}
Denote
\begin{align}\beta_{j1}&=\frac1n\sum_{l\in\mathbb T_j}[(R^{(j)})^2]_{ll}-\frac1n\sum_{l=1}^n[(R)^2]_{ll}
=\frac1n\sum_{l\in\mathbb T_j}[(R^{(j)})^2]_{ll}-m_n'(z)\notag\\&=\frac1n\frac d{dz}(\Tr \mathbf R-\Tr\mathbf R^{(j)}),\notag\\
\beta_{j2}&=\frac1n\sum_{l\in\mathbb T_j}(X^2_{jl}-1)[(R^{(j)})^2]_{ll},\notag\\
\beta_{j3}&=\frac1n\sum_{l\ne k\in\mathbb T_j}X_{jl}X_{jk}[(R^{(j)})^2]_{lk}.
\end{align}
Using  these notations we may write
\begin{align}\notag
m_n'(z)=s^2(z)(1+m_n'(z))+\frac{s^2(z)}n\sum_{j=1}^n (\beta_{j1}+\beta_{j2}+\beta_{j3})+D_1.
\end{align}
Solving this equation with respect to $m_n'(z)$ we obtain
\begin{equation}\label{semi1}
m_n'(z)=\frac{s^2(z)}{1-s^2(z)}+\frac{1}{1-s^2(z)}(D_1+D_2),
\end{equation}
where
\begin{align}\notag
D_2=\frac{s^2(z)}n\sum_{j=1}^n (\beta_{j1}+\beta_{j2}+\beta_{j3}).
\end{align}
Note that for the semi-circular law the following identities hold
\begin{equation}\notag
\frac{s^2(z)}{1-s^2(z)}=\frac{s^2(z)}{1+\frac{s(z)}{z+s(z)}}=-\frac{s(z)}{z+2s(z)}=
s'(z).
\end{equation}
Applying this relation we rewrite equality \eqref{semi1} as
\begin{equation}\label{mnderiv}
m_n'(z)-s'(z)=\frac{1}{s(z)(z+2s(z))}(D_1+D_2).
\end{equation}
Using the last equality, we may represent $\mathfrak T_{42}$ now as follows
\begin{equation}\notag
\mathfrak T_{42}=\mathfrak T_{421}+\mathfrak T_{422},
\end{equation}
where
\begin{align}
\mathfrak T_{421}&=\frac1n\E \frac{D_1}{z+m_n(z)+s(z)},\notag\\
\mathfrak T_{422}&=\frac1n\E \frac{D_2}{z+m_n(z)+s(z)}.\notag
\end{align}



Recall that, by \eqref{d1},
\begin{align}\mathfrak T_{421}&=-\frac1{n}\sum_{j=1}^n
\E\frac{\varepsilon_{j4}(R_{jj}-s(z))(1+R_{jj}^{-1}s(z))}{(z+s(z)+m_n(z))}.
\end{align}
Using that $|\varepsilon_{j4}|\le 1/nv$ and
\begin{equation}
R_{jj}-s(z)=s(z)(\varepsilon_j+\Lambda_n)R_{jj}
\end{equation}
and that $|z+m_n(z)+s(z)|\ge c|z^2-4|^{\frac12}$ for $z\in\mathbb G$, it is straightforward to check that
\begin{align}\label{finisch7*}
|\mathfrak T_{421}|\le\frac C{n^{\frac32}v^{\frac32}|z^2-4|^{\frac14}}.
\end{align}
\subsubsection{Estimation of $\mathfrak T_{422}$}
We represent now $\mathfrak T_{422}$ in the form
\begin{align}\notag
\mathfrak T_{422}=\mathfrak T_{51}+\mathfrak T_{52}+\mathfrak T_{53},
\end{align}
where
\begin{align}\notag
\mathfrak T_{5\nu}&=\frac1{n^2}\sum_{j=1}^n\E\frac{\beta_{j\nu}}{z+m_n(z)+s(z)},\quad\text{for}\quad \nu=1,2,3.
\end{align}
At first we investigate $\mathfrak T_{53}$. Note that, by \cite[Lemma 7.26]{GT:2015},
\begin{equation}\notag
|\beta_{j1}|\le \frac C{nv^2}.
\end{equation}
Therefore, for $z\in\mathbb G$, using \cite[Lemma 7.5 inequality (7.9)]{GT:2015}, 
\begin{equation}\label{t51}
|\mathfrak T_{51}|\le \frac C{n^2v^2\sqrt{|z^2-4|}}\le \frac C{n^{\frac32}v^{\frac32}|z^2-4|^{\frac14}}.
\end{equation}

Furthermore, we consider the quantity $\mathfrak T_{5\nu}$, for $\nu=2,3$.
Applying the Cauchy-Schwartz inequality and \cite[inequality (7.42)]{GT:2015}  as well, 
we get
\begin{align}\notag
|\mathfrak T_{5\nu}|\le \frac C{n^2}\sum_{j=1}^n\E^{\frac12}\frac{|\beta_{j\nu}|^2}{|z+m_n^{(j)}(z)+s(z)|^2}.
\end{align}
By \cite[Lemma 7.25]{GT:2015} together with \cite[Lemma 7.5]{GT:2015}, we obtain
\begin{align}\notag
\E^{\frac12}\frac{|\beta_{j\nu}|^2}{|z+m_n^{(j)}(z)+s(z)|^2}\le \frac C{n^{\frac12}v^{\frac32}|z^2-4|^{\frac14}}.
\end{align}
This implies that
\begin{align}\label{finisch7}
|\mathfrak T_{5\nu}|\le \frac C{n^{\frac32}v^{\frac32}|z^2-4|^{\frac14}}.
\end{align}
Inequalities \eqref{t51} and \eqref{finisch7} yield
\begin{equation}\notag
 |\mathfrak T_{422}|\le \frac C{n^{\frac32}v^{\frac32}|z^2-4|^{\frac14}}.
\end{equation}
Combining \eqref{finisch7*} and \eqref{finisch7^}, we get, for $z\in\mathbb G$,
\begin{align}\label{finisch7^}
 |\mathfrak T_{42}|\le \frac C{n^{\frac32}v^{\frac32}|z^2-4|^{\frac14}}.
\end{align}
\subsubsection{Estimation of $\mathfrak T_{43}$} Recall that
\begin{equation}\notag
\mathfrak T_{43}=\frac{s(z)}n\E\frac{(m_n'(z)-s'(z))\Lambda_n}{z+m_n(z)+s(z)}.
\end{equation}
Applying equality \eqref{mnderiv}, we obtain
\begin{align}\notag
 \mathfrak T_{43}=\mathfrak T_{431}+\mathfrak T_{432},
\end{align}
where
\begin{align}
 \mathfrak T_{431}&=\frac{1}{n(z+2s(z))}\E\frac{D_1\Lambda_n}{z+m_n(z)+s(z)},\\
 \mathfrak T_{432}&=\frac{1}{n(z+2s(z))}\E\frac{D_2\Lambda_n}{z+m_n(z)+s(z)}.\notag
\end{align}

By definition of $D_1$,  we get
\begin{align}\notag
|\mathfrak T_{431}|\le \mathfrak T_{61}+\mathfrak T_{62},
\end{align}
where
\begin{align}
\mathfrak T_{61}&=\frac C{nv|z^2-4|^{\frac12}}\frac1n\sum_{j=1}^n\E|\varepsilon_j|(|R_{jj}|+1)|\Lambda_n|,\notag\\
\mathfrak T_{62}&=\frac C{nv}\frac1n\sum_{j=1}^n\E\frac{(|R_{jj}|+1)|\Lambda_n|^2}{|z+m_n(z)+s(z)|}.
\end{align}
Applying the Cauchy-Schwartz  inequality and Lemma \ref{lambdaopt}, we get
\begin{equation}
\mathfrak T_{61}\le \frac C{n^2v^2|z^2-4|^{\frac12}}.
\end{equation}
Furthermore, using that $|\lambda_n|\le \sqrt{|T_n|}$ for $z\in\mathbb G$, we get
\begin{equation}\label{t62}
\mathfrak T_{62}\le \frac C{nv}\frac1n\sum_{j=1}^n\E^{\frac12}(|R_{jj}|^2+1)\E^{\frac12}\frac{|T_n|^2}{|z+m_n(z)+s(z)|^2}.
\end{equation}
By definition of $T_n$ and H\"older's inequality, we have
\begin{equation}
\E\frac{|T_n|^2}{|z+m_n(z)+s(z)|^2}\le \frac1n\sum_{j=1}^n\E^{\frac4{4+\varkappa}}\frac{|\varepsilon_j|^{\frac{4+\varkappa}2}}{|z+m_n(z)+s(z)|^{\frac{4+\varkappa}2}}\E^{\frac{\varkappa}{4+\varkappa}}|R_{jj}|^{\frac{2(4+\varkappa)}{\varkappa}}.
\end{equation}
Applying now Lemmas \ref{eps2}, \ref{eps3} and Theorem \ref{rjj}, we get
\begin{equation}
\E\frac{|T_n|^2}{|z+m_n(z)+s(z)|^2}\le \frac {C}{n|z^2-4|}+\frac C{nv|z^2-4|^{\frac12}}.
\end{equation}
The last inequality together with inequality \eqref{t62}
implies, for $z\in\mathbb G$,
\begin{equation}
\mathfrak T_{62}\le \frac C{(nv)^{\frac32}|z^2-4|^{\frac 14}}.
\end{equation}

For $z\in\mathbb G$ we get
\begin{equation}\notag
|\mathfrak T_{431}|
 \le
\frac{4}{n^{\frac32} v^{\frac32}|z^2-4|^{\frac14}}.
\end{equation}
Applying  the Cauchy -- Schwartz inequality, we get for $\mathfrak T_{432}$ accordingly
\begin{align}\notag
 |\mathfrak T_{432}|\le \frac C{n|z^2-4|^{\frac12}}\E^{\frac12}|D_2|^2\E^{\frac12}|\Lambda_n|^2.
\end{align}
By Lemma \ref{lambdaopt}, we have
\begin{equation}\label{krak}
 |\mathfrak T_{432}|\le \frac C{n^2v|z^2-4|^{\frac12}}\E^{\frac12}|D_2|^2.
\end{equation}
By definition of $D_2$,
\begin{equation}\notag
 \E|D_2|^2\le \frac1n\sum_{j=1}^n(\E|\beta_{j1}|^2+\E|\beta_{j2}|^2+\E|\beta_{j3}|^2).
\end{equation}
Applying \cite[Lemma 7.26]{GT:2015}, \cite[Lemma 7.25]{GT:2015} with $\nu=2,3$,  we get
\begin{align}\label{krak1}
 \E|D_2|^2\le \frac{C}{n^2v^4}+\frac C{nv^3}.
\end{align}
Inequalities \eqref{krak} and \eqref{krak1} together imply, for $z\in\mathbb G$,
\begin{equation}\notag
 |\mathfrak T_{432}|\le \frac C{n^3v^3|z^2-4|^{\frac12}}+\frac C{n^{\frac52}v^{\frac52}|z^2-4|^{\frac12}}\le \frac C{n^{\frac32}v^{\frac32}|z^2-4|^{\frac14}}.
\end{equation}
Finally we observe that
\begin{equation}\notag
 s'(z)=-\frac{s(z)}{\sqrt{z^2-4}}
\end{equation}
and, therefore
\begin{equation}\notag
 |\mathfrak T_{41}|\le \frac C{n|z^2-4|^{\frac12}}. 
\end{equation}
For $z\in\mathbb G$ we may rewrite it 
\begin{equation}\label{final001}
 |\mathfrak T_{41}|\le \frac C{n\sqrt v}.
\end{equation}
Combining now relations \eqref{final00}, \eqref{h1},  \eqref{t51}, \eqref{finisch7^}, \eqref{final001}, we get for $z\in\mathbb G$,
\begin{equation}\notag
 |\E\Lambda_n|\le \frac C{n v^{\frac34}}+\frac C{n^{\frac32}v^{\frac32}|z^2-4|^{\frac14}}.
\end{equation}
The last inequality completes the proof of Theorem \ref{stieltjesmain}.
\section{Appendix}
We reformulate here a  result by Gine, Latala and Zinn (2000), \cite{Gine:2000}, for quadratic forms $Q:=\sum_{1\le l\ne k\le n}a_{lk}\xi_l\xi_k$.
\begin{lem}\label{Gine}
 Assume that  for  $q\ge 4$ and for any $j=1,\ldots n$
 $$
 \E|\xi_j|^q\le \mu_q.
 $$
 Then there exists an absolute constant $K>0$ such that, for $z\in\mathbb G$,
 \begin{align}
 \E|Q|^q\le K^p\Big(q^{q}\sigma^q\Big(\sum_{1\le l\ne k\le n}|a_{lk}|^2\Big)^{\frac q2}+\mu_qq^{\frac{3q}2}\sum_l(\sum_k|a_{kl}|^2)^{\frac q2}+q^{2 q}\mu_q^2\sum_{l,k}|a_{lk}|^q\Big).
 \end{align}
\end{lem}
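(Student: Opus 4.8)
The plan is to obtain Lemma~\ref{Gine} as a reformulation of the quadratic-form moment bound of Gin\'e, Lata\l a and Zinn \cite{Gine:2000}, so the real task is to carry out the reduction and keep track of the constants. First I would symmetrize: replacing $a_{lk}$ by $\tfrac12(a_{lk}+a_{kl})$ leaves $Q$ unchanged since $\xi_l\xi_k=\xi_k\xi_l$, so I may assume $A=(a_{lk})$ is symmetric and set $a_{ll}:=0$. Next I would decouple: introducing an independent copy $(\xi'_k)$ of $(\xi_k)$, the classical decoupling inequality for quadratic forms gives $\E|Q|^q\les C^q\E|Q'|^q$ with $Q'=\sum_{l,k}a_{lk}\xi_l\xi'_k=\sum_l\xi_l b_l$, where $b_l:=\sum_k a_{lk}\xi'_k$. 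Conditioning on $(\xi'_k)$, the sum $\sum_l\xi_l b_l$ is a sum of independent centred variables, and one application of Rosenthal's inequality reduces the problem to estimating $q^{q/2}\sigma^q\,\E_{\xi'}\big(\sum_l b_l^2\big)^{q/2}$ and $q^{q}\mu_q\,\E_{\xi'}\sum_l|b_l|^q$.

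The second of these is routine: each $b_l=\sum_k a_{lk}\xi'_k$ is again a sum of independent centred variables, so a further application of Rosenthal's inequality gives $\E|b_l|^q\les C^q\big(q^{q/2}\sigma^q(\sum_k|a_{lk}|^2)^{q/2}+q^{q}\mu_q\sum_k|a_{lk}|^q\big)$; summing over $l$ and multiplying by $q^q\mu_q$ produces exactly the second and third terms of the asserted bound (using symmetry of $A$ to pass between row and column $\ell^2$-norms). So the whole difficulty is concentrated in the first piece, the Hilbert--Schmidt term $\E_{\xi'}\big(\sum_l b_l^2\big)^{q/2}=\big\|\,\|A\xi'\|_2\,\big\|_q^q$.

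Here the plan is to split $\|A\xi'\|_2$ into its $L^2$-norm plus a fluctuation: $\big\|\,\|A\xi'\|_2\,\big\|_q\les\big(\E\|A\xi'\|_2^2\big)^{1/2}+\big\|\,\|A\xi'\|_2-\E\,\|A\xi'\|_2\,\big\|_q$. Since $\E\|A\xi'\|_2^2=\sigma^2\|A\|_{\mathrm{HS}}^2=\sigma^2\sum_{l\ne k}|a_{lk}|^2$, the first term yields, after multiplication by the prefactor $q^{q/2}\sigma^q$, a contribution dominated by $q^{q}\sigma^{2q}(\sum_{l\ne k}|a_{lk}|^2)^{q/2}$, i.e.\ the first term of the Lemma. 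The fluctuation term is precisely what is controlled in \cite{Gine:2000}: writing $\|A\xi'\|_2^2=\langle A^{\top}A\,\xi',\xi'\rangle$, a quadratic form in $\xi'$ whose coefficient matrix has Hilbert--Schmidt and operator norms bounded in terms of $\|A\|_{\mathrm{HS}}$, $\max_l(\sum_k|a_{lk}|^2)^{1/2}$ and $\max_{l,k}|a_{lk}|$, one decouples and truncates as in \cite{Gine:2000} to bound $\big\|\,\|A\xi'\|_2-\E\,\|A\xi'\|_2\,\big\|_q$ by $C\big(\sqrt q\,\sigma\max_l(\sum_k|a_{lk}|^2)^{1/2}+q\,\mu_q^{1/q}\max_{l,k}|a_{lk}|\big)$, and these two contributions are absorbed into the second and third terms. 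Combining the three pieces completes the argument. I expect the main obstacle to be exactly this last step: the Hilbert--Schmidt term must emerge \emph{without} an extra factor $\mu_q$, without an extra power of $q$ beyond $q^{q}$, and without dimension-dependent loss, so the naive estimates $\big\|\sum_l b_l^2\big\|_{q/2}\les(\sum_l\|b_l\|_q^2)$ by Minkowski, or a Hanson--Wright bound using only $\|A\|_{\mathrm{op}}\les\|A\|_{\mathrm{HS}}$, are too lossy; it is here that one genuinely needs the sharper decoupling/truncation analysis of \cite{Gine:2000} rather than a one-line argument.
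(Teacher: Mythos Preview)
Your sketch is correct and traces the standard Gin\'e--Lata\l a--Zinn argument (symmetrize, decouple, apply Rosenthal twice, then handle the Hilbert--Schmidt fluctuation). The paper itself gives no independent proof: it simply cites \cite[Section~3, inequality~(3.3)]{Gine:2000} with the kernel $h_{ij}=a_{ij}\xi_i\xi_j$, so your proposal is strictly more detailed than what appears there while ultimately deferring to the same reference for the delicate step you flagged.
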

\begin{proof}
For the proof see \cite[Section 3, inequality (3.3)]{Gine:2000} with $h_{ij}=a_{ij}\xi_i\xi_j$.
\end{proof}
\begin{lem}\label{modify} Assuming the conditions of Theorem \ref{trun},  we have, for any $\theta>0$
\begin{equation}\label{i0}
\E\left|\frac{\varepsilon_{j4}}{z+s(z)+m_n(z)}\right|^2\frac1{|z+m_n^{(j)}(z)|^{\theta}}\le \frac C{n^2v^2}
\end{equation}
with some positive constant $C$ depending on $\varkappa$, $A_{\varkappa}$ $D$ and $\theta$ .
\end{lem}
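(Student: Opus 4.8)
\emph{Plan of proof.} The auxiliary factor $|z+m_n^{(j)}(z)|^{-\theta}$ is essentially harmless — jointly with powers of $R_{jj}$ it contributes only a bounded factor, by \cite[Lemma 7.5]{GT:2015} — so the content is the case $\theta=0$, and there the naive bound $|\varepsilon_{j4}|\le(nv)^{-1}$ does \emph{not} suffice: it would give only $\E|\varepsilon_{j4}/(z+s(z)+m_n(z))|^2\le C(n^2v^2|z^2-4|)^{-1}$, which degrades near the spectral edge. The plan is therefore to replace it by a pointwise bound exhibiting the extra smallness of $\varepsilon_{j4}$ when $|z^2-4|$ is small, and then to average by conditioning on the $\sigma$-algebra $\mathfrak M^{(j)}$ generated by the entries of $\mathbf W^{(j)}$.

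First I would prove, for $z\in\mathbb G$,
\[
|\varepsilon_{j4}|\ \le\ \frac{|R_{jj}|}{nv}\Big(C\,|z^2-4|^{\frac12}+|\Lambda_n^{(j)}|+|\varepsilon_{j2}|+|\varepsilon_{j3}|\Big).
\]
Indeed, \eqref{shur} gives $\varepsilon_{j4}R_{jj}=\frac1n R_{jj}'(z)$, and since $R_{jj}$ and $R_{jj}'$ are the integrals of $(x-z)^{-1}$, resp. $(x-z)^{-2}$, against the spectral measure of $\mathbf W$ at $e_j$, one has $|R_{jj}'|\le v^{-1}\,\im R_{jj}$, hence $|\varepsilon_{j4}|\le\frac{|R_{jj}|}{nv}\cdot\frac{\im R_{jj}}{|R_{jj}|^{2}}$. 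Now $\frac{\im R_{jj}}{|R_{jj}|^{2}}=-\im R_{jj}^{-1}$, and the Schur complement $R_{jj}^{-1}=\frac{X_{jj}}{\sqrt n}-z-\mathbf a_j^{*}\mathbf R^{(j)}\mathbf a_j$ (with $\mathbf a_j=(X_{jl}/\sqrt n)_{l\ne j}$), together with $\E\{\mathbf a_j^{*}\mathbf R^{(j)}\mathbf a_j\mid\mathfrak M^{(j)}\}=m_n^{(j)}(z)$ and $|\mathbf a_j^{*}\mathbf R^{(j)}\mathbf a_j-m_n^{(j)}(z)|\le|\varepsilon_{j2}|+|\varepsilon_{j3}|$, gives $-\im R_{jj}^{-1}\le v+\im m_n^{(j)}(z)+|\varepsilon_{j2}|+|\varepsilon_{j3}|$; finally $v+\im m_n^{(j)}(z)=v+\im s(z)+\im\Lambda_n^{(j)}\le C|z^2-4|^{\frac12}+|\Lambda_n^{(j)}|$, since $v\le|z^2-4|^{\frac12}$ and $\im s(z)\le\frac12|z^2-4|^{\frac12}$.

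Next I would square, use $|z+s(z)+m_n(z)|\ge\frac12\,|z+s(z)+m_n^{(j)}(z)|\ge c\,|z^2-4|^{\frac12}$ for $z\in\mathbb G$ (valid because $|\varepsilon_{j4}|\le(nv)^{-1}$ is much smaller than $|z+s(z)+m_n^{(j)}(z)|\ge c\,|z^2-4|^{\frac12}\ge cA_0(nv)^{-1}$), and split the resulting numerator $C(|z^2-4|+|\Lambda_n^{(j)}|^2+|\varepsilon_{j2}|^2+|\varepsilon_{j3}|^2)$ into four pieces, in each of which the factors $|R_{jj}|^2$ and $|z+m_n^{(j)}(z)|^{-\theta}$ are controlled together in $L^{2}$ by \cite[Lemma 7.5]{GT:2015}. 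Then: the $|z^2-4|$-piece cancels the $|z^2-4|^{-1}$ coming from the denominator and is $O((nv)^{-2})$; the $|\Lambda_n^{(j)}|^2$-piece is $\le C(nv)^{-4}|z^2-4|^{-1}$ by $\E|\Lambda_n^{(j)}|^{4}\le C(nv)^{-4}$ (Lemma \ref{lambdaopt}), which is $O((nv)^{-2})$ since $n^{2}v^{2}|z^2-4|\ge cA_0^{2}$ on $\mathbb G$; and the $|\varepsilon_{j2}|^2,|\varepsilon_{j3}|^2$-pieces are treated by Hölder's inequality and the moment bounds of Lemmas \ref{e2}, \ref{e3} — resp. their conditional second–moment form $\E\{|\varepsilon_{j\nu}|^{2}\mid\mathfrak M^{(j)}\}\le C\,\im m_n^{(j)}(z)\,(nv)^{-1}$ combined with $\im m_n^{(j)}(z)\le|z+s(z)+m_n^{(j)}(z)|$ — which again yields $O((nv)^{-2})$ once one uses the elementary geometry of $\mathbb G$, namely $|z^2-4|\ge c\max(\gamma,v)$ and $v\sqrt\gamma\ge v_0$ (so that $nv\ge cA_0$, $n|z^2-4|\ge cA_0$, $nv|z^2-4|^{\frac12}\ge cA_0$ and $n^{2}v^{2}|z^2-4|\ge cA_0^{2}$).

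The main obstacle is the careful bookkeeping of the $\varepsilon_{j2}$- and $\varepsilon_{j3}$-contributions: under the mere $(4+\varkappa)$–moment hypothesis the diagonal sums $\sum_l|R_{ll}^{(j)}|^{4}$ and the off–diagonal sums entering the Rosenthal/Giné–Latała–Zinn expansions are controlled only through the sharpened estimates of Lemmas \ref{e2}, \ref{e5} and Theorem \ref{rjj} (the improvement over \cite[Lemma 5.8]{GT:2015}), and it is exactly the gain in these off–diagonal bounds, together with the a priori restriction $p\le A_1(nv)^{(1-2\alpha)/2}$ on the range of usable moments, that lets all four pieces close with the clean exponent $(nv)^{-2}$ and no residual power of $|z^2-4|$.
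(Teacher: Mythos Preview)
Your overall strategy is sound and close in spirit to the paper's, but there is a genuine gap: you invoke Lemma \ref{lambdaopt} (indeed a fourth-moment version of it) to control the $|\Lambda_n^{(j)}|^2$ piece. This is circular, since in the paper Lemma \ref{lambdaopt} is proved \emph{after} Lemma \ref{modify} and explicitly uses Lemma \ref{modify} in its proof (see the lines ``According to Lemma \ref{modify}, we have \ldots'' and ``The last inequality and Lemma \ref{modify} with $\theta=2$ imply \ldots''). Moreover, Lemma \ref{lambdaopt} only asserts $\E|\Lambda_n|^2\le C(nv)^{-2}$, not the fourth-moment bound you quote.

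The fix is immediate and in fact brings your argument closer to the paper's. Do \emph{not} split $v+\im m_n^{(j)}(z)$ as $C|z^2-4|^{1/2}+|\Lambda_n^{(j)}|$. Keep it intact and use
\[
v+\im m_n^{(j)}(z)\le \im\bigl(z+s(z)+m_n^{(j)}(z)\bigr)\le |z+s(z)+m_n^{(j)}(z)|,
\]
which directly cancels one factor of the denominator (after replacing $m_n$ by $m_n^{(j)}$ via \cite[inequality (7.42)]{GT:2015}). The contribution of this ``deterministic'' piece is then $\le C|R_{jj}|^2(nv)^{-2}|z+m_n^{(j)}(z)|^{-\theta}$, and Theorem \ref{rjj} together with \cite[Lemma 7.5]{GT:2015} finishes it. Your treatment of the $\varepsilon_{j2}$, $\varepsilon_{j3}$ pieces via conditional second moments and the geometry $nv|z^2-4|^{1/2}\ge cA_0$ on $\mathbb G$ is fine.

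For comparison, the paper avoids the whole $\im R_{jj}^{-1}$ detour by writing $\varepsilon_{j4}=(\eta_{j0}+\eta_{j1}+\eta_{j2})R_{jj}$ with $\eta_{j0},\eta_{j1},\eta_{j2}$ built from $(\mathbf R^{(j)})^2$ rather than $\mathbf R^{(j)}$; the $\eta_{j0}$ term plays exactly the role of your $v+\im m_n^{(j)}(z)$ term and is bounded the same way, while $\eta_{j1},\eta_{j2}$ have conditional variances $O(n^{-3}v^{-2})$ and $O(n^{-3}v^{-3}\im m_n^{(j)}(z))$ directly, with no appeal to $\Lambda_n$ at all.
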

\begin{proof} Applying \cite[inequality (7.42)]{GT:2015}, we may write
\begin{align}
\E\left|\frac{\varepsilon_{j4}}{z+s(z)+m_n(z)}\right|^2 \frac 1{|z+m_n^{(j)}(z)|^{\theta} } \le \E\left|\frac{\varepsilon_{j4}}{z+s(z)+m^{(j)}_n(z)}\right|^2\frac 1 {|z+m_n^{(j)}(z)|^{\theta}}.
\end{align}
Furthermore, using  \cite[equality (7.34)]{GT:2015}, we get
\begin{align}
\E&\left|\frac{\varepsilon_{j4}}{z+s(z)+m_n^{(j)}(z)}\right|^2\frac 1{|z+m_n^{(j)}(z)|^{\theta}}\notag\\&=\E\left|\frac{(1+\frac1n\sum_{k\ne l\in\mathbb T_j}X_{jk}X_{jl}[\mathbf {R^{(j)}}^2]_{lk})R_{jj}}{z+s(z)+m_n^{(j)}(z)}\right|^2\frac1{|z+m_n^{(j)}(z)|^{\theta}}.
\end{align}
Introduce notations as in \cite{GT:2015}:
\begin{align}
\eta_{j0}&:=\frac1n(1+\frac1n\sum_{l\in\mathbb T_j}[\mathbf {R^{(j)}}^2]_{ll}),\quad\eta_{j1}:=\frac1{n^2}\sum_{l\in\mathbb T_j}(X_{jl}^2-1)[\mathbf {R^{(j)}}^2]_{ll},\notag\\
\eta_{j2}&:=\frac1{n^2}\sum_{l\ne k\in\mathbb T_j}X_{jl}X_{jk}[\mathbf {R^{(j)}}^2]_{lk}).
\end{align}
Using that
\begin{equation}
|\eta_{j0}|\le \frac1{nv}\im(z+m_n^{(j)}(z))\le \frac1{nv}\im(z+m_n^{(j)}(z)+s(z)),
\end{equation}
(see \cite[inequality (7.57)]{GT:2015})
we get
\begin{equation}
\E\left|\frac{\eta_{j0}}{z+s(z)+m_n^{(j)}(z)}\right|^2\frac1{|z+m_n^{(j)}(z)}|^{\theta}\le 
\frac1{(nv)^2}\E\frac1{|z+m_n^{(j)}(z)|^{\theta}}\le \frac C{(nv)^2}.
\end{equation}
Note that
\begin{equation}\label{i1}
\E\{|\eta_{j1}|^2\Big|\mathfrak M^{(j)}\}\le \frac 1{n^4}\sum_{l\in\mathbb T_j}|[\mathbf {R^{(j)}}^2]_{ll}^2\le \frac C{n^3v^2}
\end{equation}
and
\begin{equation}\label{i2}
\E\{|\eta_{j2}|^2\Big|\mathfrak M^{(j)}\}\le \frac 1{n^4}\sum_{l\ne k\in\mathbb T_j}|[\mathbf {R^{(j)}}^2]_{lk}^2\le \frac C{n^3v^3}\im m_n^{(j)}(z).
\end{equation}
See \cite[Lemma 6 inequalities (7.15), (7.16)]{GT:2015}.
Inequalities \eqref{i1}, \eqref{i2} together imply
\begin{align}
\E\left|\frac{\eta_{j1}+\eta_{j2}}{z+s(z)+m_n(z)}\right|^2&\frac1{|z+m_n^{(j)}(z)}|^{\theta}\notag\\&\le \frac C{n^3v^3}\E\left|\frac{v+\im m_n^{(j)}(z)}{z+s(z)+m_n(z)}\right|^2\frac1{|z+m_n^{(j)}(z)|^{\theta}}\notag\\&\le 
\frac C{n^3v^3|z^2-4|^{\frac12}}.
\end{align}
We use here \cite[Lemma 7.5 inequality (7.9)]{GT:2015}.
Furthermore, applying \cite[Lemma 7.13]{GT:2015}, we get, for $z\in \mathbb G$
\begin{equation}\label{i3}
\E\left|\frac{\eta_{j1}+\eta_{j2}}{z+s(z)+m_n(z)}\right|^2\frac1{|z+m_n^{(j)}(z)|^{\theta}}\le 
\frac C{n^2v^2}.
\end{equation}
Inequalities \eqref{i0} and \eqref{i1} conclude the proof of Lemma \ref{modify}. Thus Lemma \ref{modify} is proved.
\end{proof}
\begin{lem}\label{eps3}
Assuming the conditions of Theorem \ref{trun}  there exists a positive constant depending on $\varkappa, \mu_{4+\varkappa}$ and $D$ such that

\begin{equation}
\E^{\frac2{4+\varkappa}}\{|\varepsilon_{j3}|^{\frac{4+\varkappa}2}\Big|\mathfrak M^{(j)}\}\le \frac C{\sqrt n}(\frac1n\sum_{l=1}^n|R_{ll}|^2)^{\frac12}
\end{equation}
and
\begin{equation}
\E^{\frac2{4+\varkappa}}\{|\varepsilon_{j3}|^{\frac{4+\varkappa}2}\Big|\mathfrak M^{(j)}\}\le \frac C{\sqrt {nv}}(\im m_n^{(j)}(z))^{\frac12}
\end{equation}
\end{lem}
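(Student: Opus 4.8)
The goal is to bound the conditional moment $\E^{2/(4+\varkappa)}\{|\varepsilon_{j3}|^{(4+\varkappa)/2}\mid\mathfrak M^{(j)}\}$, where $\varepsilon_{j3}=\frac1n\sum_{l\in\mathbb T_j}(X_{jl}^2-1)R^{(j)}_{ll}$ is, conditionally on $\mathfrak M^{(j)}$, a sum of independent centered random variables with deterministic coefficients $R^{(j)}_{ll}/n$. The plan is to apply Rosenthal's inequality with exponent $p:=(4+\varkappa)/2\ge2$ to this conditional sum, exactly as in the proof of Lemma~\ref{e3}, but keeping track of the two competing terms and then taking the $2/(4+\varkappa)$-th power.

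First I would write, by Rosenthal,
\begin{align}\notag
\E\{|\varepsilon_{j3}|^{p}\mid\mathfrak M^{(j)}\}\le C^p\Bigl(p^{p/2}\mu_4^{p/2}n^{-p/2}\Bigl(\tfrac1n\sum_{l\in\mathbb T_j}|R^{(j)}_{ll}|^2\Bigr)^{p/2}+p^{p}\mu_{2p}\,n^{-p}\sum_{l\in\mathbb T_j}|R^{(j)}_{ll}|^{p}\Bigr).
\end{align}
For the second term I would use the truncation bound $|X_{jl}^2-1|\le D^2n^{2\alpha}+1\le C n^{2\alpha}$ together with $E|X_{jl}^2-1|^2\le\mu_4$, so $\mu_{2p}\le C^p n^{2\alpha(p-2)}$ (actually one only needs a crude bound here since $p$ is a fixed constant, not growing with $n$); then $n^{-p}\sum_l|R^{(j)}_{ll}|^p \le n^{-(p-1)}\max_l|R^{(j)}_{ll}|^{p-2}\cdot\frac1n\sum_l|R^{(j)}_{ll}|^2$, and crucially $p$ being a \emph{fixed} constant means the second term is of smaller order in $n$ than the first (the extra factor is a negative power of $n$ because $p-1>p/2$). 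Hence the first Rosenthal term dominates and
\begin{align}\notag
\E\{|\varepsilon_{j3}|^{p}\mid\mathfrak M^{(j)}\}\le C^p n^{-p/2}\Bigl(\tfrac1n\sum_{l\in\mathbb T_j}|R^{(j)}_{ll}|^2\Bigr)^{p/2}.
\end{align}
Taking the $1/p$-th power gives the first claimed inequality, after replacing $\sum_{l\in\mathbb T_j}$ by $\sum_{l=1}^n$ and $R^{(j)}_{ll}$ by $R_{ll}$ — the latter swap is justified by the interlacing/resolvent identity relating $R^{(j)}_{ll}$ and $R_{ll}$ (as used repeatedly in \cite{GT:2015}), which only costs an absolute constant; alternatively one may simply keep $R^{(j)}_{ll}$ and note $\frac1n\sum_{l\in\mathbb T_j}|R^{(j)}_{ll}|^2\le\frac Cn\sum_{l=1}^n|R_{ll}|^2$.

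For the second inequality I would use the standard resolvent identity $\frac1n\sum_{l}|R^{(j)}_{ll}|^2\le\frac1{nv}\cdot\frac1n\sum_l\im R^{(j)}_{ll}=\frac1{nv}\im m_n^{(j)}(z)$, which follows from $|R_{ll}|^2=\sum_k|R_{lk}|^2\cdot$(something)$\le v^{-1}\im R_{ll}$ — more precisely $\sum_k|R_{lk}|^2=(\mathbf R\overline{\mathbf R})_{ll}=\tfrac1v\im R_{ll}$ and $|R_{ll}|^2\le\sum_k|R_{lk}|^2$. Plugging this into the bound above,
\begin{align}\notag
\E\{|\varepsilon_{j3}|^{p}\mid\mathfrak M^{(j)}\}\le C^p n^{-p/2}(nv)^{-p/2}\bigl(\im m_n^{(j)}(z)\bigr)^{p/2},
\end{align}
wait — that gives an extra $(nv)^{-p/2}$, which is stronger than claimed; the claimed bound only asks for $(nv)^{-p/2}(\im m_n^{(j)})^{p/2}$ without the extra $n^{-p/2}$, so in fact $n^{-p/2}\le1$ suffices and we get it with room to spare. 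Taking $1/p$-th powers yields the second inequality.

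The main obstacle, such as it is, is purely bookkeeping: one must verify that the Rosenthal ``$\mu_{2p}$'' term is genuinely lower-order, which hinges on $\alpha<1/2$ and $4\alpha>1$ (the conditions already invoked in Lemma~\ref{e3}), and on the fact that here $p=(4+\varkappa)/2$ is a constant independent of $n$, so the factors $p^{p/2}$, $C^p$ are harmless constants absorbed into $C$. There is no deep difficulty; the proof is a direct transcription of the argument for Lemma~\ref{e3} specialized to the fixed exponent $(4+\varkappa)/2$, combined with the two elementary resolvent estimates for $\frac1n\sum_l|R^{(j)}_{ll}|^2$.
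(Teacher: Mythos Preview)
Your overall strategy (conditional Rosenthal with exponent $p=(4+\varkappa)/2$, then pass to the second bound via $|R^{(j)}_{ll}|^2\le v^{-1}\im R^{(j)}_{ll}$) matches the paper exactly. The one substantive gap is in how you dispose of the second Rosenthal term. You write
\[
n^{-p}\sum_l|R^{(j)}_{ll}|^p \le n^{-(p-1)}\max_l|R^{(j)}_{ll}|^{p-2}\cdot\tfrac1n\sum_l|R^{(j)}_{ll}|^2
\]
and then assert this is lower order because $p-1>p/2$. But $\max_l|R^{(j)}_{ll}|$ carries no a~priori almost-sure bound here (Theorem~\ref{rjj} controls only moments, and this is a conditional statement), so ``lower order in $n$'' is not justified as written. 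The paper sidesteps this entirely with the one-line $\ell^p$--$\ell^2$ embedding (valid since $p\ge2$):
\[
\sum_{l\in\mathbb T_j}|R^{(j)}_{ll}|^{\frac{4+\varkappa}2}\le \Bigl(\sum_{l\in\mathbb T_j}|R^{(j)}_{ll}|^2\Bigr)^{\frac{4+\varkappa}4},
\]
which makes the second Rosenthal term bounded by (a constant times) the first, with no appeal to truncation, $\max$, or order comparison. Note also that for this particular exponent $2p=4+\varkappa$, so $\mu_{2p}\le C\mu_{4+\varkappa}$ is simply a constant; no $n^{2\alpha(p-2)}$ factor arises.

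For the second inequality you have an arithmetic slip: the correct bound is $\tfrac1n\sum_l|R^{(j)}_{ll}|^2\le v^{-1}\im m_n^{(j)}(z)$, not $(nv)^{-1}\im m_n^{(j)}(z)$. Plugging this in gives exactly $(nv)^{-p/2}(\im m_n^{(j)})^{p/2}$ with no spare factor of $n^{-p/2}$; your conclusion is right but the ``room to spare'' remark is spurious.
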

\begin{proof}
Applying Rosenthal's inequality, we get
\begin{align}
\E|\varepsilon_{j3}|^{\frac{4+\varkappa}2}\le \frac {C\mu_4}{n^{\frac{4+\varkappa}2}}(\sum_{l\in\mathbb T_j}|R^{(j)}_{ll}|^2)^{\frac{4+\varkappa}4}+
\frac C{n^{\frac{4+\varkappa}2}}\mu_{4+\varkappa}\sum_{l\in\mathbb T_j}|R^{(j)}_{ll}|^{\frac{4+\varkappa}2}.
\end{align}
Note that
\begin{equation}
\sum_{l\in\mathbb T_j}|R^{(j)}_{ll}|^{\frac{4+\varkappa}2}\le (\sum_{l\in\mathbb T_j}|R^{(j)}_{ll}|^2)^{\frac{4+\varkappa}4}.
\end{equation}
\end{proof}
These inequalities complete the proof of the first claim of Lemma \ref{eps3}. The second one is easy.
Thus Lemma \ref{eps3} is proved.
\begin{lem}\label{eps2}
Assuming the conditions of Theorem \ref{trun}  there exists a positive constant depending on $\varkappa, \mu_{4+\varkappa}$ and $D$ such that
\begin{equation}
\E^{\frac1{4+\varkappa}}\{|\varepsilon_{j2}|^{{4+\varkappa}}\Big|\mathfrak M^{(j)}\}\le \frac C{\sqrt {nv}}\im^{\frac12} m_n^{(j)}(z).
\end{equation}
\end{lem}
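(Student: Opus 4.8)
The plan is to estimate the conditional moment $\E^{\frac1{4+\varkappa}}\{|\varepsilon_{j2}|^{4+\varkappa}\mid\mathfrak M^{(j)}\}$ by treating $\varepsilon_{j2}=\frac1n\sum_{l\ne k\in\mathbb T_j}X_{jl}X_{jk}R^{(j)}_{lk}$ as a quadratic form in the independent variables $\{X_{jl}\}_{l\in\mathbb T_j}$, conditionally on $\mathfrak M^{(j)}$ (so that the coefficients $a_{lk}=\frac1n R^{(j)}_{lk}$ are fixed). First I would invoke Lemma \ref{Gine} (the Gin\'e--Latala--Zinn inequality) with $q=4+\varkappa$, $\xi_l=X_{jl}$ and $a_{lk}=\frac1n R^{(j)}_{lk}$, obtaining
\begin{align}\notag
\E\{|\varepsilon_{j2}|^{4+\varkappa}\mid\mathfrak M^{(j)}\}\le K^{4+\varkappa}\Big(q^{q}\sigma^q n^{-q}\big(\!\!\sum_{l\ne k}|R^{(j)}_{lk}|^2\big)^{\frac q2}+\mu_q q^{\frac{3q}2}n^{-q}\sum_l\big(\!\sum_k|R^{(j)}_{kl}|^2\big)^{\frac q2}+q^{2q}\mu_q^2n^{-q}\!\!\sum_{l,k}|R^{(j)}_{lk}|^{q}\Big),
\end{align}
where $q=4+\varkappa$ is a fixed constant, so all factors of the form $q^{cq}$, $K^q$ are absorbed into the constant $C$.

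Next I would bound each of the three sums using the standard resolvent identities. For the first term, $\sum_{l,k\in\mathbb T_j}|R^{(j)}_{lk}|^2=\Tr(\mathbf R^{(j)}(\mathbf R^{(j)})^*)=\frac1v\im\Tr\mathbf R^{(j)}=\frac{n}{v}\im m_n^{(j)}(z)$, which gives the dominant contribution $n^{-q}(n v^{-1}\im m_n^{(j)})^{q/2}=(nv)^{-q/2}(\im m_n^{(j)})^{q/2}$; taking the $(4+\varkappa)$-th root yields exactly $(nv)^{-1/2}\im^{1/2}m_n^{(j)}(z)$, the claimed bound. For the second term I would use $\sum_k|R^{(j)}_{kl}|^2=[\mathbf R^{(j)}(\mathbf R^{(j)})^*]_{ll}=\frac1v\im R^{(j)}_{ll}$ (see \cite[inequalities (7.15),(7.16)]{GT:2015}), so $\sum_l(\sum_k|R^{(j)}_{kl}|^2)^{q/2}=v^{-q/2}\sum_l(\im R^{(j)}_{ll})^{q/2}\le v^{-q/2}(\sum_l\im R^{(j)}_{ll})^{q/2}\cdot$ (using $q/2\ge 1$ and the fact that each $\im R^{(j)}_{ll}\ge 0$ is bounded) $\le v^{-q/2}(n\,\im m_n^{(j)})^{q/2}$; combined with $\mu_q n^{-q}$ this is of order $(nv)^{-q/2}(\im m_n^{(j)})^{q/2}\mu_q n^{-q/2}$, which is smaller than the first term since $\mu_q n^{-q/2}\le D^q n^{q\alpha-q/2}\le 1$ as $\alpha<\tfrac12$. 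For the third term, $\sum_{l,k}|R^{(j)}_{lk}|^q\le (\max_{l,k}|R^{(j)}_{lk}|)^{q-2}\sum_{l,k}|R^{(j)}_{lk}|^2\le H_0^{q-2}\cdot n v^{-1}\im m_n^{(j)}$ under the a priori bound \eqref{h0} (or, crudely, using $|R^{(j)}_{lk}|\le v^{-1}$ one gets $v^{-(q-2)}\cdot nv^{-1}\im m_n^{(j)}$); together with $q^{2q}\mu_q^2 n^{-q}$ and the moment bound $\mu_q\le D^q n^{q\alpha-2}\mu_{4+\varkappa}$ from \eqref{mom4}, this term carries extra negative powers of $n$ and is again dominated by the first. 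Hence all three contributions are $\le C\,(nv)^{-q/2}(\im m_n^{(j)}(z))^{q/2}$ (using $\im m_n^{(j)}\ge v$ and $v\le 1$ to absorb the lower-order pieces), and taking $(4+\varkappa)$-th roots finishes the proof.

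The main obstacle is bookkeeping the second and third sums so that they are genuinely dominated by the first: this requires the moment growth estimate \eqref{mom4} together with the restriction $\alpha<\tfrac12$ (equivalently $\varkappa>0$), and in the third sum one must either use the a priori resolvent bound \eqref{h0} or pay with powers of $1/v$ that are then compensated by the spare factors $n^{-q/2}$, $n^{q\alpha-2}$. Everything else is a routine application of Lemma \ref{Gine} and the identities $\sum_{l,k}|R^{(j)}_{lk}|^2=v^{-1}n\im m_n^{(j)}$ and $\sum_k|R^{(j)}_{kl}|^2=v^{-1}\im R^{(j)}_{ll}$, exactly as in \cite[Lemmas 7.15, 7.16]{GT:2015}.
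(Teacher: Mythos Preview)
Your overall strategy---apply Lemma~\ref{Gine} with $q=4+\varkappa$ to the off-diagonal quadratic form, then bound each of the three resulting sums in terms of $\sum_{l,k}|R^{(j)}_{lk}|^2 = nv^{-1}\im m_n^{(j)}(z)$---is exactly what the paper does. Your treatment of the first term is correct and gives the claimed bound directly.

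There is, however, a genuine gap in your handling of the third term. Condition~\eqref{h0} is a \emph{moment} bound $\E|R^{(\mathbb J)}_{jk}|^q\le H_0^q$, not a pointwise bound, so you cannot invoke $\max_{l,k}|R^{(j)}_{lk}|\le H_0$ inside the conditional expectation (where the $R^{(j)}_{lk}$ are $\mathfrak M^{(j)}$-measurable and fixed). Your fallback $|R^{(j)}_{lk}|\le v^{-1}$ then yields a contribution of order $\mu_{4+\varkappa}^2\,(nv)^{-(q-1)}\im m_n^{(j)}$, and comparing this with the target $(nv)^{-q/2}(\im m_n^{(j)})^{q/2}$ forces $nv\,\im m_n^{(j)}\ge c$, which is a local-law statement not available at this stage (indeed the lemma is used \emph{in} the proof of the local law). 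The appeal to ``$\im m_n^{(j)}\ge v$'' at the end does not hold without an a~priori spectral bound.

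The fix is much simpler than either of your routes: since $q/2=(4+\varkappa)/2\ge2$, the elementary inequality $\sum_i a_i^{\,p}\le\bigl(\sum_i a_i\bigr)^p$ for $a_i\ge0$ and $p\ge1$ gives
\[
\sum_{l\ne k}|R^{(j)}_{lk}|^{4+\varkappa}\le\Bigl(\sum_{l\ne k}|R^{(j)}_{lk}|^{2}\Bigr)^{\frac{4+\varkappa}{2}}
\quad\text{and}\quad
\sum_{l}\Bigl(\sum_{k}|R^{(j)}_{lk}|^{2}\Bigr)^{\frac{4+\varkappa}{2}}\le\Bigl(\sum_{l,k}|R^{(j)}_{lk}|^{2}\Bigr)^{\frac{4+\varkappa}{2}},
\]
so the second and third sums are each dominated by the first with extra factors $\mu_{4+\varkappa}$ and $\mu_{4+\varkappa}^2$ only. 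This is precisely what the paper does, in one line. A related remark: since $q=4+\varkappa$ is a \emph{fixed} exponent, $\mu_q=\mu_{4+\varkappa}$ is just a constant by hypothesis; there is no need to invoke the growth estimate~\eqref{mom4}, which concerns $\mu_{2p}$ for large $p$. Your bookkeeping in the second term (the stray factor $n^{-q/2}$) is accordingly a harmless arithmetic slip---the term is already of the right order once you note $\mu_q$ is constant.
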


\begin{proof}Applying Lemma \ref{Gine}, we get
\begin{align}
\E\{|\varepsilon_{j2}|^{4+\varkappa}|\mathfrak M^{(j)}\}\le& Cn^{-(4+\varkappa)}\Bigg((\sum_{l\ne k\in\mathbb T_j}|R^{(j)}_{kl}|^2)^{\frac{4+\varkappa}2}
+\mu_{4+\varkappa}\sum_{l\in\mathbb T_j}(\sum_{k\in\mathbb T_{k,j}}|R^{(j)}_{kl}|^2)^{\frac{4+\varkappa}2}\notag\\&+
\mu_{4+\varkappa}^2\sum_{l\in\mathbb T_j}\sum_{k\in\mathbb T_{k,j}}|R^{(j)}_{kl}|^{{4+\varkappa}}\Bigg)\le 
 Cn^{-(4+\varkappa)}(\sum_{k\in\mathbb T_{k,j}}|R^{(j)}_{kl}|^2)^{\frac{4+\varkappa}2}\notag
\end{align}
Now \cite[Lemma 7.6  inequality (7.11)]{GT:2015} completes the proof. Thus Lemma \ref{eps2} is proved.
\end{proof}
\begin{lem}\label{lambdaopt} Assuming the conditions of Theorem \ref{trun} there exist a positive constant depending on 
$\varkappa$, $D$ such that for any $z\in\mathbb G$
\begin{equation}
\E|\Lambda_n|^2\le \frac C{n^2v^2}.
\end{equation}
\end{lem}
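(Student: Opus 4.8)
The plan is to run the self-consistent argument for $\Lambda_n$, organised so that a conditioning step kills the leading contribution. From \eqref{lambda}, $\Lambda_n=T_n(z)/(z+s(z)+m_n(z))$ with $T_n=\frac1n\sum_{j=1}^n\varepsilon_jR_{jj}$; by \eqref{7.3} the $\varepsilon_{j4}$-part satisfies $\frac1n\sum_j\varepsilon_{j4}R_{jj}=\frac1n m_n'(z)$, so $\Lambda_n=\frac{\widehat T_n}{z+s+m_n}+\frac{m_n'(z)}{n(z+s+m_n)}$, where $\widehat T_n=\frac1n\sum_j\widetilde\varepsilon_jR_{jj}$ and $\widetilde\varepsilon_j:=\varepsilon_{j1}+\varepsilon_{j2}+\varepsilon_{j3}$. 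Since $|m_n'(z)|\le v^{-1}\im m_n(z)\le v^{-1}|z+s+m_n|$, the last summand is $\le 1/(nv)$ pointwise (a special case of Lemma \ref{modify}), so it contributes $\le C/(n^2v^2)$ to $\E|\Lambda_n|^2$ and it remains to bound $\E\big|\widehat T_n/(z+s+m_n)\big|^2$.

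For this I would write $\E\big|\tfrac{\widehat T_n}{z+s+m_n}\big|^2=\frac1n\sum_{j=1}^n\E\Big[\tfrac{\overline{\widehat T_n}}{\overline{z+s+m_n}}\cdot\tfrac{\widetilde\varepsilon_jR_{jj}}{z+s+m_n}\Big]$ and, in each summand, strip the $j$-dependence off the slowly varying factors: $\overline{\Lambda_n}\to\overline{\Lambda_n^{(j)}}$ and $m_n\to m_n^{(j)}$ (errors $\varepsilon_{j4}$, which is $\le 1/(nv)$ deterministically), and $R_{jj}\to-\tfrac1{z+m_n^{(j)}}$ with error $\widetilde\varepsilon_jR_{jj}/(z+m_n^{(j)})$ via the representation $R_{jj}=-\tfrac1{z+m_n^{(j)}}+\tfrac{\widetilde\varepsilon_jR_{jj}}{z+m_n^{(j)}}$ used in the estimate of $H_2$. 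The reason for separating the $\varepsilon_{j4}$-part in the first step is that the emerging leading term $-\frac1n\sum_j\E\Big[\frac{\overline{\Lambda_n^{(j)}}\,\widetilde\varepsilon_j}{(z+s+m_n^{(j)})(z+m_n^{(j)})}\Big]$ is \emph{exactly} zero: the prefactor is $\mathfrak M^{(j)}$-measurable and $\E\{\widetilde\varepsilon_j\,|\,\mathfrak M^{(j)}\}=0$ since $\E X_{jj}=0$, $\E[X_{jl}X_{jk}]=0$ for $l\ne k$ and $\E[X_{jl}^2-1]=0$.

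What survives is a fixed list of error terms, each carrying at least two small factors: an extra $\varepsilon_{j4}$ (size $1/(nv)$, or Lemma \ref{modify}), or a second $\widetilde\varepsilon_j$, i.e. a factor $|\widetilde\varepsilon_j|^2$. I would estimate each by Cauchy--Schwarz and H\"older using $\E\{|\varepsilon_{j1}|^2\,|\,\mathfrak M^{(j)}\}=1/n$ and $\E\{(|\varepsilon_{j2}|^2+|\varepsilon_{j3}|^2)\,|\,\mathfrak M^{(j)}\}\le \tfrac C{nv}\im m_n^{(j)}(z)$ (Lemmas \ref{e1}, \ref{eps2}, \ref{eps3}, together with their fourth-order analogues), $\E|R_{jj}|^p\le C_0^p$ (Theorem \ref{rjj}), and the a priori bounds $|z+m_n^{(j)}(z)|\ge c$, $|z+m_n^{(j)}(z)+s(z)|\ge\im m_n^{(j)}(z)$ from \cite[Lemma 7.5]{GT:2015} (so that the $\im m_n^{(j)}$ factors are absorbed). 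This gives an inequality $\E\big|\widehat T_n/(z+s+m_n)\big|^2\le \tfrac C{nv}\,\E^{\frac12}|\Lambda_n|^2+\tfrac C{n^2v^2}$; recombining with the $m_n'$-term, putting $x=\E^{\frac12}|\Lambda_n|^2$ and using $|\Lambda_n-\Lambda_n^{(j)}|=|\varepsilon_{j4}|\le 1/(nv)$ to replace $\Lambda_n^{(j)}$ by $\Lambda_n$ everywhere, we arrive at $x^2\le \tfrac C{nv}x+\tfrac C{n^2v^2}$, whence $x\le C'/(nv)$, which is the claim. I expect the only real work to be the bookkeeping of this last step — enumerating the residual terms and checking that each genuinely retains a spare factor $(nv)^{-1}$ after the denominators $|z+m_n^{(j)}|$, $|z+m_n^{(j)}+s|$ have been handled with the a priori estimates; the structure is otherwise the same as the corresponding lemma in \cite{GT:2015}.
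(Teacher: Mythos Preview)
Your proposal is correct and follows essentially the same approach as the paper: expand one copy of $\Lambda_n$ via the self-consistent equation, peel off the $\varepsilon_{j4}$-part using $\tfrac1n\sum_j\varepsilon_{j4}R_{jj}=\tfrac1n m_n'(z)$ and the pointwise bound $|m_n'(z)|\le v^{-1}\im(z+s+m_n)$, then in each remaining summand replace $m_n\to m_n^{(j)}$, $R_{jj}\to-1/(z+m_n^{(j)})$, $\overline{\Lambda_n}\to\overline{\Lambda_n^{(j)}}$ so that the leading term vanishes by conditional independence, and close with the quadratic inequality $x^2\le Cx/(nv)+C/(nv)^2$. The paper's only notable deviation from your sketch is that for $\nu=2,3$ it replaces $\overline{\Lambda_n}$ not by $\overline{\Lambda_n^{(j)}}$ but by a slightly modified $\mathfrak M^{(j)}$-measurable quantity $\overline{\Lambda_n^{(j1)}}$ (built from $\Tr\mathbf R^{(j)}$ and $\Tr(\mathbf R^{(j)})^2$) so that the resulting difference $\delta_{nj}$ has an explicit form amenable to \eqref{7.59}; your simpler choice $\Lambda_n^{(j)}$ with $|\Lambda_n-\Lambda_n^{(j)}|=|\varepsilon_{j4}|\le1/(nv)$ works as well, since the residual terms still carry the spare $(nv)^{-1}$ you describe.
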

\begin{proof}We write
\begin{align}\notag
 \E|\Lambda_n|^2=\E\Lambda_n\overline \Lambda_n=\E\frac{T_n}{z+m_n(z)+s(z)}\overline\Lambda_n=\sum_{\nu=1}^4\E\frac{T_{n\nu}}{z+m_n(z)+s(z)}\overline\Lambda_n,
\end{align}
where
\begin{align}\notag
 T_{n\nu}:=\frac1n\sum_{j=1}^n\varepsilon_{j\nu}R_{jj}, \text{ for }\nu=1,\ldots,4.
\end{align}
First we observe that by \eqref{shur}
\begin{align}\notag
 |T_{n4}|=\frac1n|m_n'(z)|\le \frac1{nv}\im m_n(z).
\end{align}
 Hence $|z+m_n^{(j)}(z)+s(z)|\ge \im m_n^{(j)}(z)$ and Jensen's inequality yields
\begin{equation}\label{tn4}
 |\E\frac{T_{n4}}{z+m_n(z)+s(z)}\overline\Lambda_n|\le \frac 1{nv}\E^{\frac12}|\Lambda_n|^2.
\end{equation}
Furthermore, we represent $T_{n1}$ as follows
\begin{equation}\notag
 T_{n1}=T_{n11}+T_{n12},
\end{equation}
where
\begin{align}
 T_{n11}&=-\frac1n\sum_{j=1}^n\varepsilon_{j1}\frac1{z+m_n(z)},\notag\\
 T_{n12}&=\frac1n\sum_{j=1}^n\varepsilon_{j1}(R_{jj}+\frac1{z+m_n(z)}).\notag
\end{align}
Using these notations we may write
\begin{align}
 V_1:=\E\frac{T_{n11}}{z+m_n(z)+s(z)}\overline\Lambda_n=-\E\frac{(\frac1n\sum_{j=1}^n\varepsilon_{j1})}{(z+m_n(z))(z+s(z)+m_n(z))}\overline \Lambda_n.\notag
\end{align}
Applying the Cauchy -- Schwartz inequality twice and using the definition of $\varepsilon_{j1}$, we get by \cite[Lemma 7.5 inequality (7.9)]{GT:2015}
\begin{align}
 |V_1|\le \frac1{|z^2-4|^{\frac12}}\E^{\frac14}\Big|\frac1{n\sqrt n}\sum_{j=1}^nX_{jj}\Big|^4\E^{\frac14}|\frac1{z+m_n(z)|^4}\E^{\frac12}|\Lambda_n|^2.
\end{align}
By Rosenthal's inequality, we have, for $z\in\mathbb G$
\begin{equation}\label{v1}
 |V_1|\le \frac C{n|z^2-4|^{\frac12}}\E^{\frac12}|\Lambda_n|^2\le \frac1{nv}\E^{\frac12}|\Lambda_n|^2.
\end{equation}
Using \eqref{rjj1} we  rewrite $T_{n12}$, obtaining
\begin{align}\notag
 V_2:=\E\frac{T_{n12}}{z+m_n(z)+s(z)}\overline\Lambda_n=\frac1{n\sqrt n}\sum_{j=1}^n\E\frac{X_{jj}\varepsilon_jR_{jj}}{(z+m_n(z))(z+m_n(z)+s(z))}\overline\Lambda_n.
\end{align}
By the Cauchy -- Schwartz inequality, 
using the definition of $\varepsilon_j$ (see representation \eqref{rjj}), we obtain
\begin{align}\label{v3}
 |V_2|\le \frac1{\sqrt n}\sum_{\nu=1}^4\E^{\frac12}\frac{|\frac1n\sum_{j=1}^n\varepsilon_{j\nu}X_{jj}R_{jj}|^2}{|z+m_n(z)+s(z)|^2|z+m_n(z)|^2}
 \E^{\frac12}|\Lambda_n|^2=:\sum_{\nu=1}^4V_{2\nu}.
\end{align}
For $\nu=1$, we have
\begin{align}
 V_{21}\le\frac1{ n}\E^{\frac12}\frac{\Big|\frac1n\sum_{j=1}^nX_{jj}^2R_{jj}\Big|^2}{|z+m_n(z)+s(z)|^2|z+m_n(z)|^2}\E^{\frac12}|\Lambda_n|^2.\notag
\end{align}
Applying  H\"older's inequality   and \cite[Lemma 7.5 inequality (7.9)]{GT:2015}, we arrive at
\begin{align}\label{v4}
 V_{21}\le \frac1{ n\sqrt{|z^2-4|}}&\E^{\frac2{4+\varkappa}}\Big(\frac1n\sum_{j=1}^n|X_{jj}|^{\frac{4+\varkappa}2}\Big)^2\notag\\&\times\E^{\frac{\varkappa}{4(4+\varkappa)}}\Big(\frac1n\sum_{j=1}^n|R_{jj}|^{\frac{2(4+\varkappa)}{\varkappa}}\Big)^4
 \E^{\frac{\varkappa}{4(4+\varkappa)}}\frac1{|z+m_n(z)|^{\frac{8(4+\varkappa)}{\varkappa}}}\E^{\frac12}|\Lambda_n|^2.
\end{align}
Observe that
\begin{align}\label{x4}
 \E\Big(\frac1n\sum_{j=1}^n|X_{jj}|^{\frac{4+\varkappa}2}\Big)^2&=\Big(\frac1n\sum_{j=1}^n\E |X_{jj}|^{\frac{4+\varkappa}2}\Big)^2
 +\E\Big(\frac1n\sum_{j=1}^n(|X_{jj}|^{\frac{4+\varkappa}2}-\E|X_{jj}|^{\frac{4+\varkappa}2} )\Big)^2\notag\\&\le 
 2\mu_{4+\varkappa}+\frac2{n^2}\sum_{j=1}^n\E|X_{jj}|^{4+\varkappa}\le  C. 
\end{align}
The last inequality, inequality \eqref{v4}, Theorem \ref{rjj} and \cite[Lemma 7.6]{GT:2015} together imply
\begin{equation}\label{v6}
 V_{21}\le \frac C{n\sqrt{|z^2-4|}}\E^{\frac12}|\Lambda_n|^2\le \frac C{nv}\E^{\frac12}|\Lambda_n|^2.
\end{equation}
Furthermore, for $\nu=4$, by \cite[Lemma 7.16]{GT:2015}, we have
\begin{align}
 V_{24}\le \frac1{nv\sqrt n}\E^{\frac12}\frac{\frac1n\sum_{j=1}^n|X_{jj}|^2|R_{jj}|^2}{|z+m_n(z)+s(z)|^2|z+m_n(z)|^2}
 \E^{\frac12}|\Lambda_n|^2.\notag
\end{align}
Applying H\"older's inequality  and \cite[Lemma 7.5 ]{GT:2015}, we get
\begin{align}
 V_{24}\le\frac1{nv\sqrt n\sqrt{|z^2-4|}}
 \E^{\frac2{4+\varkappa}}\Big(\frac1n\sum_{j=1}^n|X_{jj}|^{\frac{4+\varkappa}2}\Big)&
 \E^{\frac{\varkappa}{4(4+\varkappa)}}\Big(\frac1n\sum_{j=1}^n|R_{jj}|^{\frac{4(4+\varkappa)}{\varkappa}}\Big)\notag\\&
 \E^{\frac{\varkappa}{4(4+\varkappa)}}\frac1{|z+m_n(z)|^{\frac{4(4+\varkappa)}{\varkappa}}}\E^{\frac12}|\Lambda_n|^2.\notag
\end{align}
Applying Theorem \ref{rjj},  we obtain
\begin{equation}\label{v7}
 V_{24}\le  \frac C{nv}\E^{\frac12}|\Lambda_n|^2.
\end{equation}
By H\"older's inequality, we have for $\nu=2,3$,
\begin{align}
 V_{2\nu}&\le \frac1{\sqrt n}\E^{\frac2{4+\varkappa}}\left(\frac1n\sum_{j=1}^n\frac{|\varepsilon_{j\nu}|^{\frac{4+\varkappa}2}|X_{jj}|^{\frac{4+\varkappa}2}}{|z+m_n(z)+s(z)|^{\frac{4+\varkappa}2}}\right)\E^{\frac{\varkappa}{4(4+\varkappa}}\left(\frac1n\sum_{j=1}^n|R_{jj}|^{\frac{8(4+\varkappa)}{\varkappa}}\right)
\notag\\&\times
\E^{\frac{\varkappa}{4(4+\varkappa)}} \frac1{|z+m_n(z)|^{\frac{8(4+\varkappa)}{\varkappa}}}
 \E^{\frac12}|\Lambda_n|^2.\notag
\end{align}
Note that for $\nu=2,3$, r.v. $X_{jj}$ doesn't depend on $\varepsilon_{j\nu}$ and on the $\sigma$-algebra  $\mathfrak M^{(j)}$.
We may write, using \cite[inequality (7.42)]{GT:2015},
\begin{align}
 \E\frac{|\varepsilon_{j\nu}|^{\frac{4+\varkappa}2}|X_{jj}|^{\frac{4+\varkappa}2}}{|z+m_n(z)+s(z)|^{\frac{4+\varkappa}2}}
 &\le C\sqrt{\mu_{4+\varkappa}}\,\E\frac{|\varepsilon_{j\nu}|^{\frac{4+\varkappa}2}}{|z+m_n^{(j)}(z)+s(z)|^{\frac{4+\varkappa}2}}.\notag
\end{align}
Applying now Lemmas \ref{eps2}, \ref{eps3} and \cite[Lemma 7.5]{GT:2015}, we arrive at
\begin{equation}\label{v8}
 V_{2\nu}\le  \frac C{nv}\E^{\frac12}|\Lambda_n|^2,\text{ for }\nu=2,3.
\end{equation}
Inequalities \eqref{v6}, \eqref{v7}, \eqref{v8} together imply
\begin{equation}\label{v9}
 V_2\le \frac C{nv}\E|\Lambda_n|^2.
\end{equation}

Consider now the quantity
\begin{equation}
 Y_{\nu}:=\E\frac{T_{n\nu}}{z+m_n(z)+s(z)}\overline\Lambda_n,\notag
\end{equation}
for $\nu=2,3$.
We represent it as follows
\begin{equation}
 Y_{\nu}=Y_{\nu1}+Y_{\nu2},\notag
\end{equation}
where
\begin{align}
 Y_{\nu1}&=-\frac1n\sum_{j=1}^n\E\frac{\varepsilon_{j\nu}\overline\Lambda_n}{(z+m_n^{(j)}(z))(z+m_n(z)+s(z))},\notag\\
 Y_{\nu2}&=\frac1n\sum_{j=1}^n\E\frac{\varepsilon_{j\nu}(R_{jj}+\frac1{z+m_n^{(j)}(z)})\overline\Lambda_n}{z+m_n(z)+s(z)}.\notag
\end{align}
By the representation \eqref{rjj1},  we have
\begin{align}
 Y_{\nu2}=\sum_{\mu=1}^3\frac1n\sum_{j=1}^n\E\frac{\varepsilon_{j\nu}\varepsilon_{j\mu}\overline\Lambda_nR_{jj}}{(z+m_n(z)+s(z))(z+m_n^{(j)}(z))}.\notag
\end{align}
Using  \cite[inequality (7.42)]{GT:2015}, we may write, for $z\in\mathbb G$
\begin{align}
 |Y_{\nu2}|\le\sum_{\mu=1}^3\frac Cn\sum_{j=1}^n\E\frac{|\varepsilon_{j\nu}||\varepsilon_{j\mu}||\overline\Lambda_n||R_{jj}|}{|z+m_n^{(j)}(z)+s(z)||z+m_n^{(j)}(z)|}. \notag
\end{align}
Applying  the inequality $ab\le \frac12(a^2+b^2)$, we get
\begin{align}
 |Y_{\nu2}|&\le\sum_{\mu=1}^3\frac Cn\sum_{j=1}^n\E\frac{|\varepsilon_{j\mu}|^2|R_{jj}|}{|z+m_n^{(j)}(z)+s(z)||z+m_n^{(j)}(z)|}
 |\Lambda_n|\notag\\&\le\sum_{\mu=1}^3\frac Cn\sum_{j=1}^n\E\frac{|\varepsilon_{j\mu}|^2}{|z+m_n^{(j)}(z)+s(z)||z+m_n^{(j)}(z)|}
|\Lambda_n^{(j)}||R_{jj}|\notag\\&+\sum_{\mu=1}^3\frac Cn\sum_{j=1}^n\E\frac{|\varepsilon_{j\mu}|^2}{|z+m_n^{(j)}(z)+s(z)||z+m_n^{(j)}(z)|}
|\varepsilon_{j4}||R_{jj}|
 =\widehat Y_{1}+\widehat Y_{2}.
\end{align}
Applying H\"older's inequality, we get
\begin{align}
\E\{|\varepsilon_{j\mu}|^2|R_{jj}|\Big|\mathfrak M^{(j)}\}\le \E^{\frac4{4+\varkappa}}\{|\varepsilon_{j\mu}|^{\frac{4+\varkappa}2}\Big|\mathfrak M^{(j)}\}
\E^{\frac{\varkappa}{4+\varkappa}}\{|R_{jj}|^{\frac{4+\varkappa}{\varkappa}}\Big|\mathfrak M^{(j)}\}.
\end{align}
By Lemmas \ref{eps2} and \ref{eps3}, for $\mu=1,2,3$, we have
\begin{align}\label{neq1}
\E\{|\varepsilon_{j\mu}|^2|R_{jj}|\Big|\mathfrak M^{(j)}\}\le 
C\Big(\frac1n+\frac1{n^2}\sum_{l\in\mathbb T_j}|R^{(j)}_{ll}|^2
+\frac1{{nv}}\im m_n^{(j)}(z)\Big)
\E^{\frac{\varkappa}{4+\varkappa}}\{|R_{jj}|^{\frac{\varkappa}{4+\varkappa}}
|\mathfrak M^{(j)}\}.
\end{align}
Conditioning and using inequality \eqref{neq1} and applying Theorem \ref{rjj}, we arrive at
\begin{align}
\E&\frac{|\varepsilon_{j\mu}|^2}{|z+m_n^{(j)}(z)+s(z)||z+m_n^{(j)}(z)|}
|\Lambda_n^{(j)}||R_{jj}|
\le (\frac C{n|z^2-4|^{\frac12}}
+\frac C{nv})\E^{\frac12}|\Lambda_n^{(j)}|^2\notag\\&\le 
(\frac C{n|z^2-4|^{\frac12}}
+\frac C{nv})\E^{\frac12}|\Lambda_n|^2+(\frac C{n^2v|z^2-4|^{\frac12}}
+\frac C{(nv)^2})\notag\\&\le
\frac C{nv}\E^{\frac12}|\Lambda_n|^2+\frac C{(nv)^2}.
\end{align}
The last inequality implies
\begin{equation}\label{fnu2}
 |Y_{\nu2}|\le \frac C{nv}\E^{\frac12}|\Lambda_n|^2+\frac C{(nv)^2}.
\end{equation}

In order to estimate $Y_{\nu1}$ we introduce now the quantity

\begin{equation}
 \Lambda_n^{(j1)}=\frac1n\Tr\mathbf R^{(j)}-s(z)+\frac{s(z)}n+\frac1{n^2}\Tr{\mathbf R^{(j)}}^2s(z).\notag 
\end{equation}
 Recall that
 \begin{align}
  \eta_{j1}&=\frac1n\sum_{l\in\mathbb T_j}[(\mathbf R^{(j)})^2]_{ll},\quad \eta_{j2}=\frac1n\sum_{k\ne l\in\mathbb T_j}X_{jk}X_{jl}[(\mathbf R^{(j)})^2]_{l,k},\notag\\
  \eta_{j3}&=\frac1n\sum_{l\in\mathbb T_j}(X_{jl}^2-1)[(\mathbf R^{(j)})^2]_{ll}.
 \end{align}
Note that
\begin{equation}\label{7.57}
 |\eta_{j1}|\le \frac1n|\Tr(\mathbf R^{(j)})^2].
\end{equation}
We use that (see \cite[equality (7.41)]{GT:2015})
\begin {equation}\label{7.58}
 \varepsilon_{j4}=\frac1n(1+\eta_{j1}+\eta_{j2}+\eta_{j3})R_{jj}.
\end {equation}
Note that
\begin{align}
 \delta_{nj}&=\Lambda_n-{\widetilde\Lambda}_n^{(j)}=-\varepsilon_{j4}-\frac{s(z)}n-\frac1n\eta_{j0}s(z)\notag\\&=
 \frac1n(R_{jj}-s(z))(1+\eta_{j1})+\frac1n(\eta_{j2}+\eta_{j3})R_{jj}.\notag
\end{align}
This yields 
\begin{equation}\label{7.59}
 |\delta_{nj}|\le \frac1n(1+|\eta_{j1}|)|R_{jj}-s(z)|+\frac1n|\eta_{j2}+\eta_{j3}||R_{jj}|
\end{equation}

We represent $Y_{\nu1}$ in the form
\begin{equation}\notag
 Y_{\nu1}=Z_{\nu1}+Z_{\nu2}+Z_{\nu3}+Z_{\nu4},
\end{equation}
where
\begin{align}
 Z_{\nu1}&=-\frac1n\sum_{j=1}^n\E\frac{\varepsilon_{j\nu}{\overline\Lambda}_n^{(j1)}}{(z+m_n^{(j)}(z))(z+m_n^{(j)}(z)+s(z))},\notag\\
 Z_{\nu2}&=\frac1n\sum_{j=1}^n\E\frac{\varepsilon_{j\nu}\overline\delta_{nj}}{(z+m_n^{(j)}(z))(z+m_n(z)+s(z))},\notag\\
 Z_{\nu3}&=\frac1n\sum_{j=1}^n\E\frac{\varepsilon_{j\nu}{\overline\Lambda}_n\varepsilon_{j4}}{(z+m_n^{(j)}(z))(z+m_n^{(j)}(z)+s(z))(z+m_n(z)+s(z))},\notag\\
 Z_{\nu4}&=-\frac1n\sum_{j=1}^n\E\frac{\varepsilon_{j\nu}{\overline\delta}_{nj}\varepsilon_{j4}}{(z+m_n^{(j)}(z))(z+m_n^{(j)}(z)+s(z))(z+m_n(z)+s(z))}.\notag
\end{align}
First,  note that by conditional independence
\begin{equation}\label{7.60}
 Z_{\nu1}=0.
\end{equation}
Using the triangle inequality and \cite[inequality (7.42)]{GT:2015}, we write
\begin{equation}
|Z_{\nu3}|\le \widehat Z_{\nu3}+\widetilde Z_{\nu3},
\end{equation}
where
\begin{align}
\widehat Z_{\nu3}&=\frac 1n\sum_{j=12}^n\E\frac{|\varepsilon_{j\nu}||\Lambda_n^{(j)}||\varepsilon_{j4}|}{|z+m_n^{(j)}||z+m_n^{(j)}(z)+s(z)|^2},\notag\\
\widetilde Z_{\nu3}&=\frac 1n\sum_{j=12}^n\E\frac{|\varepsilon_{j\nu}||\varepsilon_{j4}|^2}{|z+m_n^{(j)}||z+m_n^{(j)}(z)+s(z)|^2}
\end{align}
Using that $|\varepsilon_{j4}|\le 1/nv$ and applying the Cauchy -- Schwartz inequality, we get
\begin{align}
\widetilde Z_{\nu3}\le\frac1{nv}\frac1n\sum_{j=1}^n\E^{\frac12}\left(\frac{|\varepsilon_{j\nu}|^2}{|z+m_n^{(j)}|^2|z+m_n^{(j)}(z)+s(z)|^2}\right)\E^{\frac12}\left(\frac{|\varepsilon_{j4}|^2}{|z+m_n^{(j)}(z)+s(z)|^2}\right).
\end{align}
Conditioning and applying Theorem \ref{rjj} and \cite[Lemma 7.5 inequality (7.0)]{GT:2015}, we obtain, for $z\in\mathbb G$,
\begin{align}
\E^{\frac12}&\left(\frac{|\varepsilon_{j\nu}|^2}{|z+m_n^{(j)}|^2|z+m_n^{(j)}(z)+s(z)|^2}\right)\notag\\&\le \frac C{\sqrt{nv}}\E^{\frac12}\left(\frac{1}{|z+m_n^{(j)}|^2|z+m_n^{(j)}(z)+s(z)|}\right)\le \frac C{\sqrt nv|z^2-4|^{\frac14}}\le C.
\end{align}
According to Lemma \ref{modify}, we have
\begin{align}
\E^{\frac12}\left(\frac{|\varepsilon_{j4}|^2}{|z+m_n^{(j)}(z)+s(z)|^2}\right)\le \frac C{nv}.
\end{align}
The last inequalities together imply, for $z\in\mathbb G$
\begin{equation}
\widetilde Z_{\nu3}\le\frac C{n^2v^2}
\end{equation}
Furthermore, conditioning and applying the Cauchy -- Schwartz inequality, we obtain
\begin{align}
\E\{|\varepsilon_{j4}||\varepsilon_{j\nu}|\Big|\mathfrak M^{(j)}\}&\le \E^{\frac12}\{|\varepsilon_{j\nu}|^2\Big|\mathfrak M^{(j)}\}\E^{\frac12}\{|\varepsilon_{j4}|^2\Big|\mathfrak M^{(j)}\}\notag\\&
\le \frac1{\sqrt nv}(v+\im m_n^{(j)}(z))^{\frac12}\E^{\frac12}\{|\varepsilon_{j4}|^2\Big|\mathfrak M^{(j)}\}.
\end{align} 
Applying  the Cauchy -- Schwartz inequality again and using \cite[Lemma 7.5 inequality (7.9)]{GT:2015}, we get
\begin{align}
 \widehat Z_{\nu3}&\le \frac C{\sqrt{nv}|z^2-4|^{\frac14}}\frac1n\sum_{j=1}^n\E^{\frac14}\frac{|\varepsilon_{j4}|^2}{|z+m_n^{(j)}(z)|^2|z+m_n^{(j)}(z)+s(z)|^2}\E^{\frac12}|\Lambda_n^{(j)}|^2.\notag
\end{align}
The last inequality and Lemma \ref{modify} with $\theta=2$ imply that, for $z\in\mathbb G$,
\begin{equation}\label{znu3}
\widehat Z_{\nu3}\le\frac C{nv}\E^{\frac12}|\Lambda_n|^2+\frac C{n^2v^2}. 
\end{equation}

Furthermore, note that 
\begin{equation}\notag
 |1+\eta_{j1}|\le v^{-1}\im\{z+m_n^{(j)}(z)\}\le\im\{z+m_n^{(j)}(z)+s(z)\}.
\end{equation}
This inequality together with \eqref{7.59} implies that 
\begin{equation}\label{7.62}
 |Z_{\nu4}|\le \widetilde Z_{\nu4}+\widehat Z_{\nu4},
\end{equation}
where
\begin{align}
\widetilde Z_{\nu4}&=\frac1{n^2v}\sum_{j=1}^n\E\frac{|\varepsilon_{j\nu}\varepsilon_{j4}||R_{jj}-s(z)|}{|z+m_n^{(j)}(z)||z+m_n(z)+s(z)|},\notag\\
 \widehat Z_{\nu4}&=\frac1{n^2v}\sum_{j=1}^n\E\frac{|\varepsilon_{j\nu}\varepsilon_{j4}||\eta_{j2}+\eta_{j3}||R_{jj}-s(z)|}{|z+m_n^{(j)}(z)||z+m_n(z)+s(z)|}
\end{align}
By representation $(3.2)$, we have 
\begin{equation}\label{7.63}
 |R_{jj}-s(z)|\le |\Lambda_n||R_{jj}|+|\varepsilon_j||R_{jj}|.\notag
\end{equation}
This implies that
\begin{equation}\label{7.46}
 \widetilde Z_{\nu4}\le \widetilde Z_{\nu41}+\widetilde Z_{\nu42},
\end{equation}
where
\begin{align}
 \widetilde Z_{\nu41}&=\frac1{n^2v}\sum_{j=1}^n\E\frac{|\varepsilon_{j\nu}\varepsilon_{j4}||\Lambda_n||R_{jj}|}{|z+m_n^{(j)}(z)||z+m_n(z)+s(z)|},\notag\\
\widetilde  Z_{\nu42}&=\frac1{n^2v}\sum_{j=1}^n\E\frac{|\varepsilon_{j\nu}\varepsilon_{j4}||\varepsilon_j||R_{jj}|}{|z+m_n^{(j)}(z)||z+m_n(z)+s(z)|}.\notag
\end{align}
Similar to the bound of $\widehat Z_{\nu3}$ we obtain
\begin{equation}\label{7.64}
\widetilde Z_{\nu41}\le \frac C{nv}\E^{\frac12}|\Lambda_n|^2+\frac C{(nv)^2}.
\end{equation}
Note that
\begin{equation}
|\varepsilon_{j\nu}\varepsilon_{j4}||\varepsilon_j|\le2 |\varepsilon_{j1}|^2|\varepsilon_{j4}|+2|\varepsilon_{j2}|^2|\varepsilon_{j4}|+2|\varepsilon_{j3}|^2|\varepsilon_{j4}|+2|\varepsilon_{j4}|^3.
\end{equation}
We may write now
\begin{equation}
\widetilde  Z_{\nu42}\le \breve Z_1+\cdots+\breve Z_4,
\end{equation}
where, for $\mu=1,2,3,4$
\begin{align}
\breve Z_{\mu}=\frac1{n^2v}\sum_{j=1}^n\E\frac{|\varepsilon_{j\mu}|^2|\varepsilon_{j4}||R_{jj}|}{|z+m_n^{(j)}(z)||z+m_n(z)+s(z)|}.
\end{align}

Using that $|\varepsilon_{j4}|\le 1/(nv)$ and \cite[inequality (7.42)]{GT:2015}, we obtain
\begin{align}
\breve Z_{\mu}&\le \frac C{n^2v^2}\frac1n\sum_{j=1}^n\E\frac{|\varepsilon_{j\mu}|^2|R_{jj}|}{|z+m_n^{(j)}(z)||z+m_n^{(j)}(z)+s(z)|}\notag\\&\le 
\frac C{n^2v^2}\frac1n\sum_{j=1}^n\E^{\frac4{4+\varkappa}}
\frac{|\varepsilon_{j\mu}|^{\frac{4+\varkappa}4}}{|z+m_n^{(j)}(z)+s(z)|^{\frac{4+\varkappa}4}}
\E^{\frac{\varkappa}{2(4+\varkappa})}|R_{jj}|^{\frac{2(4+\varkappa)}{\varkappa}}\notag\\&\quad\qquad\qquad\quad\qquad\qquad\times
\E^{\frac{\varkappa}{2(4+\varkappa)}}
\frac1{|z+m_n^{(j)}(z)|^{\frac{2(4+\varkappa)}{\varkappa}}}.
\end{align}
Applying Lemmas \ref{eps2}, \ref{eps3}, \ref{modify}, we obtain for $z\in\mathbb G$,
\begin{equation}\label{breve}
\breve Z_{\mu}\le \frac C{(nv)^2}.
\end{equation}
This impliues that
\begin{equation}\label{7.66}
\widetilde  Z_{\nu42}\le \frac C{(nv)^2}.
\end{equation}

Inequalities \eqref{7.64} and \eqref{7.66} together imply
\begin{equation}\label{7.67}
 |Z_{\nu4}|\le \frac C{nv}\E^{\frac12}|\Lambda_n|^2+\frac C{n^2v^2}. 
\end{equation}

To bound $Z_{\nu2}$ we first apply \cite[ inequality (7,42)]{GT:2015} and obtain
\begin{align}
  |Z_{\nu2}|&\le\frac Cn\sum_{j=1}^n\E\frac{|\varepsilon_{j\nu}||\delta_{nj}|}
  {|z+m_n^{(j)}(z)||z+m_n^{(j)}(z)+s(z)|}.\notag
\end{align}
Furthermore, similarly to the bound of $Z_{\nu4}$ -- inequality \eqref{7.46} -- we may write 
$$
|Z_{\nu2}|\le \widetilde Z_{\nu2}+\widehat Z_{\nu2},
$$
where 
\begin{align}
 \widetilde Z_{\nu2}&=\frac C{n^2}\sum_{j=1}^n\E\frac{|\varepsilon_{j\nu}||R_{jj}-s(z)|}{|z+m_n^{(j)}(z)|},\notag\\
 \widehat Z_{\nu2}&=\frac C{n^2}\sum_{j=1}^n\E\frac{|\varepsilon_{j\nu}||\eta_{j2}+\eta_{j3}||R_{jj}|}{|z+m_n^{(j)}(z)||z+m_n^{(j)}(z)+s(z)|}.\notag
\end{align}
Furthermore,
\begin{align}\label{7.68}
\widetilde Z_{\nu2}&\le \frac C{n^2}\sum_{j=1}^n\E\frac{|\varepsilon_{j\nu}||\varepsilon_j||R_{jj}|}{|z+m_n^{(j)}(z)|}+\frac C{n^2}\sum_{j=1}^n\E\frac{|\varepsilon_{j\nu}||\Lambda_n||R_{jj}|}{|z+m_n^{(j)}(z)|}.
\end{align}
Lemmas $7.15$, $7.16$, $7.22$, inequality $(7.39)$ and Corollary $5.14$ together imply
\begin{equation}\label{7.100}
 \widetilde Z_{\nu2}\le \frac C{nv}\E^{\frac12}|\Lambda_n|^2+\frac C{n^2v^2}.
\end{equation}

Conditioning and applying now H\"older's inequality, we get
\begin{align}\notag
 |\widehat Z_{\nu2}|&\le\frac C{n^2v^2}\frac1n\sum_{j=1}^n\E^{\frac{\varkappa}{4+\varkappa}}
 |R_{jj}|^{\frac{4+\varkappa}{\varkappa}}
 \E^{\frac4{4+\varkappa}}\frac1{|z+m_n^{(j)}(z)|^{\frac{4+\varkappa}4}}.\notag
\end{align}
The last inequality together with Theorem \ref{rjj}  implies
\begin{equation}\label{7.70}
 |\widehat Z_{\nu2}|\le \frac C{n^2v^2}.
\end{equation} 
Combining inequalities \eqref{tn4}, \eqref{v1}, \eqref{v9}, \eqref{fnu2}, \eqref{7.60}, \eqref{znu3}, \eqref{breve}, \eqref{7.66}, \eqref{7.67}, \eqref{7.100} and \eqref{7.70},  we get
\begin{equation}\label{7.69}
 \E|\Lambda_n|^2\le \frac C{nv}\E^{\frac12}|\Lambda_n|^2+\frac C{n^2v^2}.
\end{equation}

Applying \cite[Lemma 7.4]{GT:2015} with $t=2$ and  $r=1$ completes the proof of Lemma \ref{lambdaopt}.

\end{proof}



\end{document}